\documentclass[11pt]{article}

\usepackage{orcidlink}
\usepackage[textwidth=480pt,textheight=650pt,centering]{geometry}

\usepackage[USenglish]{babel}
\usepackage[utf8]{inputenc}
\usepackage[T1]{fontenc}
\usepackage{lmodern}
\usepackage{dsfont}

\usepackage{amsmath,amssymb,amsfonts,amsthm, mathtools}
\usepackage{latexsym}
\usepackage{color,xcolor}
\usepackage{graphicx}
\usepackage{enumitem}
\usepackage{hyperref}
\usepackage{extarrows}
\usepackage{cite}

\usepackage{tikz}

\hypersetup{colorlinks=true,
 linkcolor=blue!50!black,
 citecolor=blue!50!black,
 urlcolor=blue!50!black}

\numberwithin{equation}{section}

\newtheorem{theorem}{Theorem}[section]

\newtheorem{lemma}[theorem]{Lemma}
\newtheorem{corollary}[theorem]{Corollary}
\newtheorem{proposition}[theorem]{Proposition}
\newtheorem{conjecture}[theorem]{Conjecture}
\theoremstyle{definition}
\newtheorem{remark}[theorem]{Remark}
\newtheorem{question}[theorem]{Question}

\newtheorem{example}[theorem]{Example}

\newcommand{\Pb}{\mathbb{P}}
\newcommand{\E}{\mathbb{E}}

\newcommand{\Z}{\mathbb{Z}}

\newcommand{\supk}[1]{^{(#1)}}
\newcommand{\supN}{^{(N)}}
\newcommand{\un}{{\mathrm{un}}}
\newcommand{\bubble}{\mathsf{B}}
\newcommand{\eps}{\varepsilon}

\definecolor{darkmagenta}{rgb}{0.55, 0.0, 0.55}

\title{Percolation on Bidirected Graphs}
\author{
Yucheng Liu\,\orcidlink{0000-0002-1917-8330}\thanks{Beijing International Center for Mathematical Research,
	Peking University,
	Beijing, China 100871.
	\href{mailto:yliu135@pku.edu.cn}{yliu135@pku.edu.cn}.
	}
\and Leonardo T. Rolla\,\thanks{Department of Statistics,
	University of São Paulo,
	Brazil.
	\href{mailto:leonardo.rolla@gmail.com}{leonardo.rolla@gmail.com}.
	}
\and Yuliang Shi\,\thanks{Department of Mathematics,
	University of British Columbia,
	Vancouver, BC, Canada V6T 1Z2.
	\href{mailto:yuliang@math.ubc.ca}{yuliang@math.ubc.ca}.
	}}

\begin{document}

\maketitle

\begin{abstract}
We study bidirected percolation on an undirected graph $G$: each edge is replaced by two oppositely directed edges, each of which is open independently with probability $p$. Two vertices are strongly connected if they are mutually reachable by open directed paths. Let $p_\mathrm{sc}$ denote the threshold for the existence of an infinite strongly connected cluster, and let $p_c$ denote the usual Bernoulli percolation threshold on $G$.

On every transitive amenable graph, we prove that $p_\mathrm{sc}=p_c$. In contrast, sufficiently spread-out modifications of every transitive non-amenable graph satisfy $p_\mathrm{sc}>p_c$. We also show that, on $\mathbb Z^d$ with $d\ge11$, the expected size of the strongly connected cluster is finite at $p=p_\mathrm{sc}$.
\end{abstract}

\section{Introduction}
\label{sec:intro}

Let $G = (V, E)$ be an infinite, connected, locally finite, transitive undirected graph.
The usual Bernoulli percolation measure $\Pb_p$ on $\{0,1\}^{E}$ 
is defined by independently declaring each edge in $E$ \textbf{open} with probability $p$ and \textbf{closed} with probability $1-p$.
In this article, 
we study the Bernoulli percolation measure on the bidirected graph constructed from $G$, whose vertex set is still $V$ and whose directed edge set $\vec{E}$ is obtained by replacing each edge in $E$ with two directed edges pointing in opposite directions. 
When no confusion can arise, we also denote this new measure on $\{0,1\}^{\vec E}$ by $\Pb_p$.

For $x, y \in V$, we write $x \leadsto y$ if $x=y$ or if there is a directed path of open edges from $x$ to $y$. 
We say that $x$ and $y$ are \textbf{strongly connected}, denoted by $x \rightleftarrows y$, if $x \leadsto y$ and $y \leadsto x$ both hold.
We define
\begin{equation}
\mathcal{C}_x^{\mathrm{out}} = \{y \in V: x \leadsto y\}, \quad
\mathcal{C}_x^{\mathrm{in}} = \{y \in V: y \leadsto x\}, \quad
\mathcal{C}_x^{\mathrm{sc}} = \{y \in V: x \rightleftarrows y\}
\end{equation}
to be the \textbf{outward cluster}, \textbf{inward cluster}, and \textbf{strongly connected cluster} of $x$, respectively.
For the undirected model,
we write $x \leftrightarrow y$ if there exists an open undirected path connecting $x$ and $y$, and we let $\mathcal{C}^{\mathrm{un}}_x$ denote the \textbf{undirected cluster} of $x$.
A simple exploration argument (as in~\eqref{eq:exploration}) shows that
$\mathcal{C}_x^{\mathrm{out}}$, $\mathcal{C}_x^{\mathrm{in}}$ and $\mathcal{C}_x^{\mathrm{un}}$ have the same distribution.

\begin{lemma} [{ \cite[Lemma~2.1]{linusson2009notecorrelationsrandomlyoriented}}]
\label{lemma:directed}
For any $p \in [0,1]$, 
the random variable $\mathcal{C}_x^{\mathrm{out}}$ (or $\mathcal{C}_x^{\mathrm{in}}$) in the bidirected model
has the same distribution as the random variable $\mathcal{C}_x^{\mathrm{un}}$ in the undirected model.
\end{lemma}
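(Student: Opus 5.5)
We prove the statement by coupling two sequential exploration processes, one in the bidirected model and one in the undirected model, so that they always see the same number of fresh Bernoulli$(p)$ variables and use them in the same way. The key observation is that exploring $\mathcal{C}_x^{\mathrm{out}}$ only ever queries the directed edges pointing \emph{away} from already discovered vertices. Each undirected edge $\{u,v\}$ therefore has at most one of its two orientations queried in a way that matters: once $u$ is discovered, the edge $(u,v)$ may be queried, while $(v,u)$ is irrelevant to the question of whether $v$ is reached from $u$.

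First fix an enumeration of $\vec E$ and define a deterministic exploration algorithm. Maintain a set $A_t$ of discovered vertices (initially $A_0=\{x\}$) and a set $Q_t$ of already queried undirected edges. At step $t$, choose the smallest (in the fixed order) undirected edge $\{u,v\}\notin Q_t$ with $u\in A_t$ and $v\notin A_t$. In the bidirected model, reveal the state of the directed edge $(u,v)$; in the undirected model, reveal the state of $\{u,v\}$. Add $\{u,v\}$ to $Q_t$, and if the revealed edge is open, add $v$ to $A_t$. The algorithm stops when no such edge exists, or otherwise continues forever. By local finiteness and the fixed order, every vertex adjacent to $A=\bigcup_t A_t$ is eventually examined, so the algorithm terminates with exactly $A=\mathcal{C}_x^{\mathrm{out}}$, respectively $A=\mathcal{C}_x^{\mathrm{un}}$. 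In the bidirected case we check this directly: any open directed path from $x$ must leave $A$ through some edge $(u,v)$ with $u\in A$, $v\notin A$, and that edge was queried and found closed, a contradiction.

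Next we argue that the revealed variables are i.i.d.\ Bernoulli$(p)$ in both models. Each query concerns a directed edge, respectively an undirected edge, that has not been revealed before. In the bidirected case, distinct undirected edges give distinct directed edges, and the reverse orientation $(v,u)$ of a queried edge is never later queried, since by then $v$ is either in $A$ or the edge is already in $Q$. Conditionally on the past, each new revealed bit is therefore an independent Bernoulli$(p)$. The two processes are the same measurable function of this sequence of bits, so $A$ has the same law in both models. This is the content of the exploration argument referred to as~\eqref{eq:exploration}. The statement for $\mathcal{C}_x^{\mathrm{in}}$ follows by reversing all orientations, which preserves $\Pb_p$.

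The main point requiring care is the independence of each revealed bit from the past. This holds because the choice of the next edge is a function of the previously revealed bits only, together with the fact just noted that no directed edge is queried twice.
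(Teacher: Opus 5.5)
Your proof is correct and is essentially the paper's argument. The paper (citing Linusson) relies on the same edge-by-edge exploration used in the proof of Propositions~\ref{prop-1} and~\ref{prop-2}: it reveals only the orientation $(u,v)$ pointing out of the discovered set, queries each undirected edge at most once, and records each revealed bit as the state of $\{u,v\}$ in a coupled undirected configuration $\omega'$, in which $A=\mathcal C_x^{\mathrm{out}}$ is exactly the cluster of $x$. Running the same algorithm separately in each model and comparing the laws of the revealed bit sequences, as you do, is equivalent to that coupling; the only cosmetic slip is that you should fix an enumeration of the undirected edges $E$ rather than of $\vec E$.
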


Fix a vertex $0 \in V$. We define the critical parameters
\begin{equation}
p_c = \inf\{p: \Pb_p(|\mathcal{C}_0^{\mathrm{un}}| = \infty) > 0\}
\quad \text{ and } \quad
p_\mathrm{sc} = \inf\{p: \Pb_p(|\mathcal{C}_0^{\mathrm{sc}}| = \infty) > 0\}.
\end{equation}
By Lemma~\ref{lemma:directed}, $p_c$ is also the threshold for the existence of an infinite inward or outward cluster.
The following lemma compares the strong percolation threshold $p_\mathrm{sc}$ with $p_c$.

\begin{lemma} \label{lemma:doubling}
We always have $p_c \leq p_\mathrm{sc} \leq \sqrt{p_c}$. 
\end{lemma}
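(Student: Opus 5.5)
The two inequalities are proved separately, each by comparing with a suitable undirected Bernoulli percolation.

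\emph{Lower bound $p_c \le p_\mathrm{sc}$.} This is a containment argument combined with Lemma~\ref{lemma:directed}. Every vertex of $\mathcal{C}_0^{\mathrm{sc}}$ is reachable from $0$ by an open directed path, so $\mathcal{C}_0^{\mathrm{sc}} \subseteq \mathcal{C}_0^{\mathrm{out}}$. Hence
\[
\Pb_p(|\mathcal{C}_0^{\mathrm{sc}}|=\infty) \le \Pb_p(|\mathcal{C}_0^{\mathrm{out}}|=\infty) = \Pb_p(|\mathcal{C}_0^{\mathrm{un}}|=\infty),
\]
where the equality is Lemma~\ref{lemma:directed}. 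For $p<p_c$ the right-hand side vanishes, and therefore $p \le p_\mathrm{sc}$. Taking the supremum over such $p$ gives $p_c \le p_\mathrm{sc}$.

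\emph{Upper bound $p_\mathrm{sc} \le \sqrt{p_c}$.} Here we couple with an undirected model. Given the bidirected configuration at parameter $q$, declare an undirected edge $\{u,v\}$ \emph{doubly open} if both $(u,v)$ and $(v,u)$ are open. Distinct undirected edges use disjoint pairs of directed edges, so doubly open edges form Bernoulli bond percolation on $G$ with parameter $q^2$.

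If $x,y$ are joined by a path of doubly open edges, then that path can be traversed in both directions, giving $x\leadsto y$ and $y \leadsto x$. Thus the doubly open cluster of $0$ is contained in $\mathcal{C}_0^{\mathrm{sc}}$. If $q>\sqrt{p_c}$, then $q^2>p_c$, so this cluster is infinite with positive probability, and hence so is $\mathcal{C}_0^{\mathrm{sc}}$. This gives $p_\mathrm{sc}\le q$ for every $q>\sqrt{p_c}$, and therefore $p_\mathrm{sc}\le\sqrt{p_c}$.

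The only point needing care is the independence claim, namely that the doubly open edges are i.i.d.\ with parameter $q^2$; this holds because the directed-edge pairs are disjoint. The case $p_c=1$ is trivial, since then both bounds read $p_\mathrm{sc}=1$.
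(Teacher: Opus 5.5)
Your proposal is correct and follows essentially the same route as the paper. You use the inclusion $\mathcal{C}_0^{\mathrm{sc}} \subseteq \mathcal{C}_0^{\mathrm{out}}$ together with Lemma~\ref{lemma:directed} for the lower bound. For the upper bound you use the coupling that declares an undirected edge open when both of its directed edges are open, which gives Bernoulli percolation with parameter $q^2$ whose cluster of $0$ lies inside $\mathcal{C}_0^{\mathrm{sc}}$.
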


\begin{proof}
The first inequality follows from the inclusion $\mathcal{C}_x^{\mathrm{sc}} \subseteq \mathcal{C}_x^{\mathrm{out}}$ and Lemma~\ref{lemma:directed}.
For the second inequality, we couple the bidirected model with an undirected model as follows.
We declare an undirected edge to be open exactly when both corresponding directed edges are open. This undirected percolation configuration then has distribution $\Pb_{p^2}$, and the undirected cluster of $x$ is contained in $\mathcal{C}_x^{\mathrm{sc}}$. 
As a result, when $p^2 > p_c$ we must have $p > p_\mathrm{sc}$.
This proves $p_\mathrm{sc} \leq \sqrt{p_c}$.
\end{proof}

The upper bound of Lemma~\ref{lemma:doubling} is attained for trees, 
since $x \rightleftarrows y$ if and only if the unique geodesic between $x$ and $y$ is open in both directions.

\subsection{Uniqueness and structure of infinite strongly connected clusters}

Prior to this work, 
the bidirected percolation model had been studied on two-dimensional lattices.
Using a planar duality argument, 
the authors of~\cite{NoronhaMoreiraVieiraHerrmannAndradeCarmona18,Redner82,SykesEssam64} proved that $p_\mathrm{sc} = p_c$ for the two-dimensional square, triangular, and hexagonal lattices. They also conjectured that $p_\mathrm{sc} = p_c$ for finite-dimensional regular lattices.
Our first result establishes $p_\mathrm{sc} = p_c$ for all amenable transitive graphs, which include the $d$-dimensional square lattice $\mathbb Z^d$ for every $d$. 
Recall that a graph $G = (V,E)$ is called \textbf{amenable} if
\begin{equation} \label{def: amenable}
 \iota_E(G) := \inf_{\substack{K\subseteq V\\0<|K|<\infty}}
 \frac{|\partial_E K|}{|K|}=0 ,
\end{equation}
where $\partial_E K = \{\{x,y\}\in E : x\in K,\ y\notin K\}$ is the edge boundary of $K$. If $\iota_E(G) > 0$, we say that $G$ is \textbf{non-amenable}. 

\begin{theorem} \label{main_thm: equality}
 Let $G$ be an amenable transitive graph. Then 
 $p_\mathrm{sc} = p_c$.
\end{theorem}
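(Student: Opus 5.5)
The inequality $p_c \le p_\mathrm{sc}$ is already given by Lemma~\ref{lemma:doubling}. So the plan is to fix $p>p_c$ and show $\Pb_p(|\mathcal{C}_0^{\mathrm{sc}}|=\infty)>0$, by following the Burton--Keane uniqueness strategy in the directed setting.

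\textbf{Step 1 (positive density of doubly infinite vertices).} By Lemma~\ref{lemma:directed}, each of the events $\{|\mathcal{C}_0^{\mathrm{out}}|=\infty\}$ and $\{|\mathcal{C}_0^{\mathrm{in}}|=\infty\}$ has probability $\theta(p)>0$. Both events are increasing in the configuration on $\vec E$, so the Harris--FKG inequality gives
\[
\Pb_p\bigl(|\mathcal{C}_0^{\mathrm{out}}|=\infty,\ |\mathcal{C}_0^{\mathrm{in}}|=\infty\bigr)\ge \theta(p)^2>0 .
\]

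\textbf{Step 2 (a directed two-point bound).} I will aim for the estimate $\inf_{x,y\in V}\Pb_p(x\leadsto y)>0$. Given this, ergodicity and finite energy, applied along a sequence $y_n\to\infty$, show that with positive probability $0\leadsto y$ and $y\leadsto 0$ hold for infinitely many $y$, so $|\mathcal{C}_0^{\mathrm{sc}}|=\infty$. Concretely, with $c=\inf_{x,y}\Pb_p(x\leadsto y)$, Harris--FKG gives $\Pb_p(0\rightleftarrows y)\ge c^2$ for every $y$. Since $\{0\rightleftarrows y\}\subseteq\{y\in\mathcal{C}_0^{\mathrm{sc}}\}$, for any finite set $F$ this yields
\[
\E_p\bigl|\mathcal{C}_0^{\mathrm{sc}}\cap F\bigr|\ge c^2|F| .
\]
I then let $|F|\to\infty$ and use a second-moment or Fatou-type argument to get $\Pb_p(|\mathcal{C}_0^{\mathrm{sc}}|=\infty)>0$.

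To prove the two-point bound, I first show that infinite outward clusters and infinite inward clusters must meet. Define an equivalence relation on the vertices with infinite outward cluster by
\[
x\sim y \iff \mathcal{C}_x^{\mathrm{out}}\cap\mathcal{C}_y^{\mathrm{out}} \text{ is infinite.}
\]
The goal is to show that almost surely there is only one class, and symmetrically for inward clusters. I also need the matching statement that for each infinite outward cluster and each infinite inward cluster, $\mathcal{C}_x^{\mathrm{out}}\cap\mathcal{C}_y^{\mathrm{in}}\neq\emptyset$ almost surely. That intersection is exactly the event $x\leadsto y$. Combined with Step 1, insertion tolerance, and transitivity, this yields $\Pb_p(x\leadsto y)\ge \theta(p)^2\cdot(\text{const})>0$ uniformly in $x,y$.

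\textbf{Step 3 (the main obstacle: a directed Burton--Keane argument).} To rule out several "ends" of the directed structure I would proceed in three moves.
\begin{itemize}
\item First, by ergodicity the number $N$ of classes of infinite outward clusters that do not meet a given infinite inward cluster is almost surely constant.
\item Second, finite energy (opening all directed edges of a large ball) reduces the cases to $N\in\{0,1,\infty\}$, and I need to exclude $N=\infty$ and to show that the case $N=1$ forces intersection.
\item Third, in the case $N=\infty$, I would define \emph{directed trifurcation points}: vertices $z$ with $z\leadsto$ three distinct classes of infinite outward paths through disjoint neighbours, together with $z$ reachable from an infinite inward cluster.
\end{itemize}
Finite energy gives these points positive density. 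The difficulty is the counting step. In the undirected case one uses that trifurcations inside a finite set $K$ induce a forest with at least as many leaves on $\partial_E K$ as there are trifurcations. Here the relevant structure is a directed forest, and the branches of different trifurcations may overlap through directed paths that are not reversible.

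I would first try to carry out this counting on the forest of outward geodesic-exploration trees. If that fails, I would replace it with a mass-transport argument: each trifurcation sends unit mass to the boundary edges it "owns". Amenability, via Følner sets $K$ with $|\partial_E K|/|K|\to 0$, then contradicts positive density of trifurcations. The other key point is the case $N=1$ with an infinite inward cluster disjoint from the outward one. I would handle it by another local modification: open all directed edges in a ball meeting both clusters, with positive probability by finite energy and ergodicity. This modification creates a directed cycle linking them, which contradicts almost-sure disjointness.
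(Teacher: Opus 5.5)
Your reduction in Step 2 is fine. Once $c=\inf_{x,y}\Pb_p(x\leadsto y)>0$, Harris--FKG gives $\Pb_p(0\rightleftarrows y)\ge c^2$. For distinct $y_n$ one then has $\Pb_p(|\mathcal{C}_0^{\mathrm{sc}}|=\infty)\ge\Pb_p(0\rightleftarrows y_n\text{ for infinitely many }n)\ge\limsup_n\Pb_p(0\rightleftarrows y_n)\ge c^2$, with no ergodicity or finite energy needed.

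The gap is in how you propose to obtain $c>0$. Step 3 is not a proof. The counting of trifurcations against $|\partial_E K|$, which is the heart of any Burton--Keane argument, is explicitly left open (``if that fails\ldots''). The finite-energy steps are not justified either. Your $N$ is defined relative to ``a given'' inward cluster, so it is not an automorphism-invariant random variable to which ergodicity applies. In the case $N=1$, a positive-probability local modification linking one outward class to one inward cluster contradicts nothing by itself, because you have not exhibited an almost-surely constant invariant that it changes. There may be infinitely many inward clusters, and no single ball reaches all of them.

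More to the point, this directed uniqueness theory is unnecessary. The idea you are missing is that Lemma~\ref{lemma:directed} is an identity in law of random \emph{sets}, so it transfers the two-point function exactly:
$\Pb_p(x\leadsto y)=\Pb_p(y\in\mathcal C_x^{\mathrm{out}})=\Pb_p(y\in\mathcal C_x^{\mathrm{un}})=\tau_p(x,y)$.
Your uniform bound therefore reduces to the undirected statement $\inf_{x,y}\tau_p(x,y)>0$, where the classical Burton--Keane theorem can be used as a black box. On an amenable transitive graph $p_u=p_c$, so for $p>p_c$ the infinite cluster is a.s.\ unique, and FKG gives $\tau_p(x,y)\ge\Pb_p(x\leftrightarrow\infty,\,y\leftrightarrow\infty)\ge\theta(p)^2$. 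Hence $\Pb_p(x\rightleftarrows y)\ge\tau_p(x,y)^2\ge\theta(p)^4$ uniformly in $x,y$, and the limsup argument above finishes. This is exactly the paper's proof.

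The paper does prove the directed fact you were after, that infinite outward and inward clusters meet (Proposition~\ref{prop-1}). It does so only for the finer structure theorem, and even there it couples an exploration with undirected percolation and invokes undirected uniqueness, not a directed trifurcation argument.
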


We first present a short proof of Theorem~\ref{main_thm: equality} that uses only the uniqueness of the infinite cluster in undirected percolation. 
A more refined structure theorem for the infinite strongly connected clusters will be presented in Theorem~\ref{main_thm: structure}. 
Let $p_u$ denote the uniqueness threshold for the undirected percolation model:
\begin{equation} \label{def:uniqueness-threshold}
 p_u = \inf\{p: \Pb_p(\text{there exists a unique infinite undirected cluster}) = 1\}.
\end{equation}
For amenable graphs, it is well known that $p_u = p_c$ (see~\cite{BurtonKeane89} or~\cite[Theorem~7.6]{LyonsPeres16});
this is the only place where we use amenability.

\begin{proof}[Proof of Theorem~\ref{main_thm: equality}]
By Lemma~\ref{lemma:doubling} and~\eqref{def:uniqueness-threshold}, it suffices to prove $p_\mathrm{sc} \le p_u$. 
Let $p > p_u$ and $x,y\in V$. By the FKG inequality and Lemma~\ref{lemma:directed}, we have
\begin{equation} \label{eq:strong_lb1}
\Pb_p( x \rightleftarrows y ) 
\geq \Pb_p( x \leadsto y ) \Pb_p( y \leadsto x ) 
= \Pb_p( y \in \mathcal C^\un_x ) \Pb_p( x \in \mathcal C_y^\un ) 
= \Pb_p( x \leftrightarrow y )^2 .
\end{equation}
Since the infinite undirected cluster is almost surely unique when $p > p_u \ge p_c$, we get
\begin{equation} \label{eq:strong_lb2}
\Pb_p( x \leftrightarrow y )^2
\ge \Pb_p( x \leftrightarrow \infty )^2 \Pb_p( y \leftrightarrow \infty )^2
= \Pb_p( x \leftrightarrow \infty )^4
 .
\end{equation}
 We fix $x\in V$ and take a sequence $(y_n)_n$ of distinct vertices. Then the preceding uniform lower bound implies
 \begin{equation}
 \Pb_p\bigl(|\mathcal C_x^{\mathrm{sc}}|=\infty\bigr)
 \ge \Pb_p\left(x\rightleftarrows y_n \text{ for infinitely many } n\right) 
 \ge \limsup_{n\to\infty} \Pb_p\left(x\rightleftarrows y_n\right)
 >0 ,
 \end{equation}
 so $p \ge p_\mathrm{sc}$. This concludes the proof.
\end{proof}

We now turn to the finer structure of the infinite strongly connected clusters.
We partition the vertex set $V$ into the following four categories:
\begin{equation} 
\begin{aligned}
 \mathcal I := \{x \in V: |\mathcal{C}_x^{\mathrm{in}}| = \infty, |\mathcal{C}_x^{\mathrm{out}}| = \infty\}, \quad
 \mathcal{I}_o := \{x \in V: |\mathcal{C}_x^{\mathrm{out}}| = \infty, |\mathcal{C}_x^{\mathrm{in}}| < \infty\}, \\
 \mathcal{I}_i := \{x \in V: |\mathcal{C}_x^{\mathrm{in}}| = \infty, |\mathcal{C}_x^{\mathrm{out}}| < \infty\}, \quad
 \mathcal{I}_\emptyset := \{x \in V: |\mathcal{C}_x^{\mathrm{in}}| < \infty, |\mathcal{C}_x^{\mathrm{out}}| < \infty\}.
\end{aligned}
\end{equation}
For a subset $A \subseteq V$, we define
\begin{equation}
 \mathcal{C}_A^{\mathrm{out}} := \bigcup_{x \in A} \mathcal{C}_x^{\mathrm{out}},
 \qquad
 \mathcal{C}_A^{\mathrm{in}} := \bigcup_{x \in A} \mathcal{C}_x^{\mathrm{in}}.
\end{equation}
For a vertex $x \in V$, we write
\begin{equation} \label{eq: cluster_of_x_truncated}
 \widetilde{\mathcal{C}}^{\mathrm{un}}_x 
= 
\mathcal{C}^{\mathrm{un}}_x \mathds{1}_{\{|\mathcal{C}^{\mathrm{un}}_x| < \infty\}} 
 =\begin{cases}
 \mathcal{C}^{\mathrm{un}}_x & (|\mathcal{C}^{\mathrm{un}}_x| < \infty) \\
 \varnothing & ( |\mathcal{C}^{\mathrm{un}}_x| = \infty),
 \end{cases}
\end{equation}
where the last equality defines the indicator function.
We let $\mathcal{C}_\infty^{\mathrm{un}}$ denote the unique infinite cluster in the undirected percolation model whenever it exists.
The following theorem improves Theorem~\ref{main_thm: equality}
by showing the existence and uniqueness of an infinite strongly connected cluster when $p > p_u$.

\begin{theorem}
\label{main_thm: structure}
 Let $G$ be a transitive graph. Let $p > p_u$. Then, almost surely, there exists a unique infinite strongly connected cluster, and it coincides with 
 \[
 \mathcal I = \{x \in V: |\mathcal{C}_x^{\mathrm{in}}| = \infty, |\mathcal{C}_x^{\mathrm{out}}| = \infty\}.
 \]
 Moreover, we have 
 \begin{equation} \label{eq:C_I}
 \mathcal{C}_{\mathcal I}^{\mathrm{out}} \overset{d}{=} \mathcal{C}_{\mathcal I}^{\mathrm{in}} \overset{d}{=} \mathcal{C}_\infty^{\mathrm{un}}
 \end{equation}
 and
 \begin{equation}
 \mathcal{C}_x^{\mathrm{out}} \setminus \mathcal{C}_{\mathcal I}^{\mathrm{out}} \overset{d}{=} \mathcal{C}_x^{\mathrm{in}} \setminus \mathcal{C}_{\mathcal I}^{\mathrm{in}} \overset{d}{=} \widetilde{\mathcal{C}}^{\mathrm{un}}_x
 \end{equation}
 for every $x\in V$.
\end{theorem}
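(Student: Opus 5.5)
The plan is to show that, for $p>p_u$, any two vertices of $\mathcal I$ are almost surely strongly connected. Since $\mathcal C_x^{\mathrm{sc}}\subseteq \mathcal C_x^{\mathrm{out}}\cap\mathcal C_x^{\mathrm{in}}$, every vertex of an infinite strongly connected cluster lies in $\mathcal I$. Hence it suffices to prove two things: $\Pb_p(x\in\mathcal I,\ y\in\mathcal I,\ y\not\leadsto x)=0$ for every fixed pair $x,y$, and $\mathcal I\neq\varnothing$ almost surely. The first gives that $\mathcal I$ is a single strongly connected cluster. Nonemptiness follows from the proof of Theorem~\ref{main_thm: equality}, which only used $p>p_u$: it gives $\Pb_p(x\in\mathcal I)\ge\Pb_p(|\mathcal C^{\mathrm{sc}}_x|=\infty)>0$, and ergodicity (invariance under the transitive automorphism group) then yields $\mathcal I\ne\varnothing$ almost surely.

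The core step is a two-stage exploration that upgrades Lemma~\ref{lemma:directed}. First explore $B=\mathcal C_x^{\mathrm{in}}$ by revealing only edges pointing into the current set. Conditionally on $B$, the following edges are unrevealed and i.i.d.\ Bernoulli$(p)$: the directed edges pointing out of $B$, and all directed edges with both endpoints outside $B$. The edges pointing into $B$ from outside are known to be closed, but the exploration of $\mathcal C_y^{\mathrm{out}}$ for $y\notin B$ never needs them except to enter $B$, and $y\leadsto x$ holds exactly when $\mathcal C_y^{\mathrm{out}}$ enters $B$.

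Next build an undirected configuration $\eta$ on $E$ as follows. On $B$ and on its edge boundary, use the state of the directed edge that the exploration of $B$ examined; these are i.i.d.\ Bernoulli$(p)$ by the argument of Lemma~\ref{lemma:directed}. Off $B$, run the outward exploration from $y$ and, for each undirected edge, use the state of whichever orientation it examines first. Edges never examined receive the state of a fixed orientation. Iterating the exploration coupling shows $\eta\sim\Pb_p$, and under this coupling the following hold:
\begin{itemize}
\item $B$ is exactly the $\eta$-cluster of $x$;
\item on the event that $\mathcal C_y^{\mathrm{out}}$ does not meet $B$, $\mathcal C_y^{\mathrm{out}}$ equals the $\eta$-cluster of $y$.
\end{itemize}
On the event $\{x,y\in\mathcal I,\ y\not\leadsto x\}$ we would therefore have two disjoint infinite $\eta$-clusters. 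Since $p>p_u$ and $\eta\sim\Pb_p$, this has probability zero. The main obstacle is making this coupling rigorous, namely checking that the two interleaved explorations together with the fresh fill-in edges really produce an i.i.d.\ field. I would do it by revealing edges one at a time with a stopping-time rule and noting that each edge's $\eta$-value is a not-yet-revealed Bernoulli$(p)$ variable.

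For the distributional identities, apply the same coupling in a single stage. Explore $\mathcal C_x^{\mathrm{out}}$ and let $\eta$ be the induced undirected configuration, so that $\mathcal C_x^{\mathrm{out}}$ is the $\eta$-cluster of $x$ for every $x$ simultaneously. This requires a consistent global choice; I would use a fixed enumeration and union over $x$, or argue one $x$ at a time for the marginal of $\mathcal C_x^{\mathrm{out}}\setminus\mathcal C^{\mathrm{out}}_{\mathcal I}$. By the uniqueness part, $\mathcal C^{\mathrm{out}}_{\mathcal I}$ equals the set of vertices reachable from the unique infinite strong cluster. Any $z$ with $|\mathcal C_z^{\mathrm{out}}|=\infty$ reaches $\mathcal I$: the in-cluster of a vertex of $\mathcal I$ is infinite, so the argument above gives $z\leadsto\mathcal I$. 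Thus $\mathcal C^{\mathrm{out}}_{\mathcal I}=\{z:|\mathcal C_z^{\mathrm{out}}|=\infty\}$, which is the infinite $\eta$-cluster. This gives~\eqref{eq:C_I}. Similarly, $\mathcal C_x^{\mathrm{out}}\setminus\mathcal C^{\mathrm{out}}_{\mathcal I}$ is empty when $x$'s cluster is infinite and equals $\mathcal C_x^{\mathrm{out}}$ otherwise, which matches $\widetilde{\mathcal C}^{\mathrm{un}}_x$. The in-versions follow by reversing all edges.
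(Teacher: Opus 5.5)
Your first half is sound and follows the paper's route. The two-stage exploration (in-cluster of $x$, then out-cluster of $y$ off it) is Proposition~\ref{prop-1} with the roles of in and out exchanged. Together with FKG and ergodicity it gives Lemma~\ref{lemma:I}. Apply ergodicity to the invariant event that an infinite strongly connected cluster exists, so you get that $\mathcal I$ is infinite, not just nonempty.

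The distributional part, however, has a genuine gap. No coupling can make $\mathcal C_z^{\mathrm{out}}$ the $\eta$-cluster of $z$ for all $z$ at once. Membership in $\eta$-clusters is symmetric, but with positive probability $(x,w)$ is open and every edge into $x$ is closed, so $w\in\mathcal C_x^{\mathrm{out}}$ while $x\notin\mathcal C_w^{\mathrm{out}}$. Coupling one vertex at a time only yields one-point marginals, which do not determine the law of a random set. Sequential explorations from $z_1,z_2,\dots$ do not fix this either, since $\mathcal C^{\mathrm{out}}_{z_2}$ can be infinite while the explored set $\mathcal C^{\mathrm{out}}_{z_2}\setminus\mathcal C^{\mathrm{out}}_{z_1}$ is finite.

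Moreover, the direction is reversed. The statement $z\leadsto\mathcal I$ means $z\in\mathcal C^{\mathrm{in}}_{\mathcal I}$. What you actually get is $\mathcal C^{\mathrm{in}}_{\mathcal I}=\{z:|\mathcal C_z^{\mathrm{out}}|=\infty\}$ a.s., whereas $\mathcal C^{\mathrm{out}}_{\mathcal I}=\{z:|\mathcal C_z^{\mathrm{in}}|=\infty\}$ a.s. Consequently your pathwise description of $\mathcal C_x^{\mathrm{out}}\setminus\mathcal C^{\mathrm{out}}_{\mathcal I}$ is false:
\begin{itemize}
\item on $\{x\in\mathcal I_o\}$ the set contains $x$, although $\mathcal C_x^{\mathrm{out}}$ is infinite;
\item on $\{x\in\mathcal I_i\}$ the set is a.s.\ empty, although $\mathcal C_x^{\mathrm{out}}$ is finite.
\end{itemize}
The identity with $\widetilde{\mathcal C}^{\un}_x$ holds only in law, as the remark after the theorem stresses.

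The missing idea is a deterministic proxy for $\mathcal I$, since $\mathcal I$ is not a set from which an exploration can start. The paper fixes a finite window $W$ and a ball $D=B(z,m)$ far from $W\cup\{x\}$. Exploring from $D$ gives exact identities in law:
\begin{itemize}
\item $\mathcal C_D^{\mathrm{out}}\overset{d}{=}\mathcal C_D^{\un}$;
\item $\mathcal C_x^{\mathrm{out}}\setminus\mathcal C_D^{\mathrm{out}}\overset{d}{=}\mathcal C_x^{\un}\mathds{1}_{\{x\not\leftrightarrow D\}}$, via a two-stage exploration.
\end{itemize}
Lemma~\ref{lemma: large_box_intersection} rests on Proposition~\ref{prop-2} (two infinite out-clusters differ by finite sets). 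It shows that $\mathcal C_D^{\mathrm{out}}\cap W=\mathcal C^{\mathrm{out}}_{\mathcal I}\cap W$ with high probability.

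On the undirected side, with high probability the unique infinite cluster meets $D$ and the finite clusters of vertices in $W\cup\{x\}$ do not reach $D$. Hence $\mathcal C^{\un}_D\cap W=\mathcal C^{\un}_\infty\cap W$ and $\{x\not\leftrightarrow D\}\approx\{x\not\leftrightarrow\infty\}$. Comparing in total variation on every finite $W$ then yields both identities.
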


In the proof, we do not use the strict inequality $p>p_u$, only the fact that Bernoulli bond percolation on $G$ at parameter $p$ almost surely has a unique infinite cluster (which can hold at $p=p_u$ for some graphs).

We stress that the theorem concerns the vertex set only and does not provide information on edges. 
In general, $\mathcal{C}_x^{\mathrm{in}} \setminus \mathcal{C}_{\mathcal I}^{\mathrm{out}}$ does not have the same distribution as $\widetilde{\mathcal{C}}^{\mathrm{un}}_x$, since the former may 
induce a disconnected subgraph of $G$.
The same applies to $\mathcal{C}_x^{\mathrm{out}} \setminus \mathcal{C}_{\mathcal I}^{\mathrm{in}}$.

\begin{remark}
 Since $\mathcal C_x^{\mathrm{out}}$ and
 $\mathcal C_x^{\mathrm{un}}$ have the same distribution by Lemma~\ref{lemma:directed}, Theorem~\ref{main_thm: structure} yields
 \begin{equation}
 \mathcal{C}_x^{\mathrm{out}} \mathds{1}_{\{|\mathcal{C}_x^{\mathrm{out}}| < \infty\}}
 \overset{d}{=}
 \mathcal{C}_x^{\mathrm{out}} \setminus \mathcal{C}_{\mathcal I}^{\mathrm{out}}.
 \end{equation}
 This is an equality of marginal distributions; one should not expect the two random sets to coincide almost surely. 
 Indeed, on $\{x\in \mathcal I\}$, both sides are empty; on $\{x\in \mathcal{I}_o\}$, only the left-hand side is empty; and on $\{x\in \mathcal{I}_i\}$, only the right-hand side is empty. On $\{x\in \mathcal{I}_\emptyset\}$, the right-hand side is contained in the left-hand side, and the inclusion may be strict.
\end{remark}

\begin{remark}
 Although we prove the uniqueness of infinite strongly connected clusters for $p > p_u$ via another method, it is natural to ask whether the indistinguishability and cluster-frequency arguments of Lyons and Schramm~\cite[Theorems~3.3 and~4.1]{LyonsSchramm99},
 which show that the coexistence of infinite clusters forces the infimum of the two-point function to be zero,
 can be combined with~\eqref{eq:strong_lb1}--\eqref{eq:strong_lb2} to prove uniqueness of infinite strongly connected clusters for unimodular transitive graphs.
 The results of~\cite{LyonsSchramm99} do not apply directly, because the configuration $\eta_{\{u,v\}}=\mathds{1}_{\{u\rightleftarrows v\}}$ induced by the directed-edge configuration might not be insertion-tolerant. In general, forcing one edge of $\eta$ to be open may require changing several other edges simultaneously.
\end{remark}

In addition to strengthening Theorem~\ref{main_thm: equality} and 
identifying the unique infinite strongly connected cluster as $\mathcal I$,
Theorem~\ref{main_thm: structure} gives a characterization of the vertices in $\mathcal{I}_o$, $\mathcal{I}_i$, and $\mathcal{I}_\emptyset$ in terms of their inward and outward clusters. We summarize this characterization in the following corollary.

\begin{corollary}\label{cor: classification_of_vertices}
 Under the assumptions of Theorem~\ref{main_thm: structure}, almost surely:
 \begin{enumerate}
 \item[$(i)$] $x\in \mathcal{I}_o$ if and only if
 $\mathcal C_x^{\mathrm{out}}$ intersects $\mathcal I$ and 
 $\mathcal C_x^{\mathrm{in}}$ is disjoint from $\mathcal I$;
 \item[$(ii)$] $x\in \mathcal{I}_i$ if and only if
 $\mathcal C_x^{\mathrm{in}}$ intersects $\mathcal I$ and
 $\mathcal C_x^{\mathrm{out}}$ is disjoint from $\mathcal I$;
 \item[$(iii)$] $x\in \mathcal{I}_\emptyset$ if and only if
 $\mathcal C_x^{\mathrm{in}}$ and
 $\mathcal C_x^{\mathrm{out}}$ are both disjoint from $\mathcal I$.
 \end{enumerate}
\end{corollary}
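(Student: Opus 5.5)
The plan is to deduce the corollary from Theorem~\ref{main_thm: structure} together with the trivial inclusions between clusters. Since $\mathcal I_o,\mathcal I_i,\mathcal I_\emptyset,\mathcal I$ partition $V$, it suffices to prove two almost sure facts for every $x$ simultaneously (countably many $x$, so it is enough to fix $x$):
\[
\text{(a)}\ |\mathcal C_x^{\mathrm{out}}|=\infty \iff \mathcal C_x^{\mathrm{out}}\cap\mathcal I\neq\varnothing,
\qquad
\text{(b)}\ |\mathcal C_x^{\mathrm{in}}|=\infty \iff \mathcal C_x^{\mathrm{in}}\cap\mathcal I\neq\varnothing .
\]
Items $(i)$--$(iii)$ then follow immediately by combining (a) and (b) with the definitions of the four classes. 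By the symmetry of reversing all edge orientations, which preserves $\Pb_p$ and swaps in- and out-clusters while fixing $\mathcal I$, (b) follows from (a). So I only treat (a).

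The direction ``$\Leftarrow$'' is deterministic. If $y\in\mathcal C_x^{\mathrm{out}}\cap\mathcal I$, then $\mathcal C_y^{\mathrm{out}}\subseteq\mathcal C_x^{\mathrm{out}}$ by concatenating paths. Since $|\mathcal C_y^{\mathrm{out}}|=\infty$, we get $|\mathcal C_x^{\mathrm{out}}|=\infty$.

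For ``$\Rightarrow$'', I use a distributional comparison from Theorem~\ref{main_thm: structure}. First observe that $\mathcal C_x^{\mathrm{out}}\cap\mathcal I\neq\varnothing$ if and only if $x\in\mathcal C_{\mathcal I}^{\mathrm{in}}$, which is a purely definitional rewriting. Similarly, $x\in\mathcal C_{\mathcal I}^{\mathrm{out}}$ means some vertex of $\mathcal I$ reaches $x$, so $\mathcal C_x^{\mathrm{out}}\setminus\mathcal C_{\mathcal I}^{\mathrm{out}}$ deserves care. The cleanest route is the identity $\mathcal C_{\mathcal I}^{\mathrm{in}}\overset{d}{=}\mathcal C_\infty^{\mathrm{un}}$ from~\eqref{eq:C_I}. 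It gives
\[
\Pb_p(x\in\mathcal C_{\mathcal I}^{\mathrm{in}})=\Pb_p(x\in\mathcal C_\infty^{\mathrm{un}})=\Pb_p(|\mathcal C_x^{\mathrm{un}}|=\infty)=\Pb_p(|\mathcal C_x^{\mathrm{out}}|=\infty),
\]
where the last equality is Lemma~\ref{lemma:directed}. By the ``$\Leftarrow$'' direction, the event $\{x\in\mathcal C_{\mathcal I}^{\mathrm{in}}\}$ is contained in $\{|\mathcal C_x^{\mathrm{out}}|=\infty\}$. Two nested events with equal probabilities agree almost surely, which proves (a).

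I do not expect a serious obstacle. The only point needing attention is matching the correct half of~\eqref{eq:C_I}: for out-clusters we need $\mathcal C_{\mathcal I}^{\mathrm{in}}$, the set of vertices reaching $\mathcal I$. We also need that the finite/infinite dichotomy for $\mathcal C_x^{\mathrm{un}}$ matches membership in $\mathcal C_\infty^{\mathrm{un}}$, which uses the almost sure uniqueness of the infinite undirected cluster assumed in Theorem~\ref{main_thm: structure}. Finally, a countable union over $x\in V$ of null events upgrades the statement to hold almost surely for all $x$ at once.
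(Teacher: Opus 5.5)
Your proof is correct and follows essentially the same route as the paper. Both arguments use the deterministic inclusion $\{x\in\mathcal C_{\mathcal I}^{\mathrm{in}}\}\subseteq\{|\mathcal C_x^{\mathrm{out}}|=\infty\}$, upgrade it to almost sure equality by matching probabilities via \eqref{eq:C_I} and Lemma~\ref{lemma:directed}, treat in-clusters in the same way, and finish with a countable union over $x\in V$.
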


\begin{proof}
Let $p > p_u$. By Theorem~\ref{main_thm: structure}, $\mathcal I$ is almost surely infinite.
Then, for any $x \in V$, we have
 \begin{equation}
 \{\mathcal C_x^{\mathrm{out}}\cap \mathcal I\neq\varnothing\}
 =
 \{x\in\mathcal C_{\mathcal I}^{\mathrm{in}}\}
 \subseteq
 \{|\mathcal C_x^{\mathrm{out}}|=\infty\}.
 \end{equation}
Moreover, by~\eqref{eq:C_I} and Lemma~\ref{lemma:directed}, the left-hand side has probability
\begin{equation}
\Pb_p\bigl(x\in\mathcal C_{\mathcal I}^{\mathrm{in}}\bigr)
 =
 \Pb_p
 \bigl(x\in\mathcal C_\infty^{\mathrm{un}}\bigr)
 =
 \Pb_p
 \bigl( |\mathcal C_x ^\un | = \infty\bigr)
 =
 \Pb_p
 \bigl(|\mathcal C_x^{\mathrm{out}}|=\infty\bigr),
\end{equation}
so $\Pb_p( \{|\mathcal C_x^{\mathrm{out}}|=\infty\} \setminus \{\mathcal C_x^{\mathrm{out}}\cap \mathcal I\neq\varnothing\} ) = 0 $.
An analogous argument shows 
$\{\mathcal C_x^{\mathrm{in}}\cap \mathcal I\neq\varnothing\}
 \subseteq
 \{|\mathcal C_x^{\mathrm{in}}|=\infty\}$
and
$\Pb_p( \{|\mathcal C_x^{\mathrm{in}}|=\infty\} \setminus \{\mathcal C_x^{\mathrm{in}}\cap \mathcal I\neq\varnothing\} ) = 0 $. 
The desired result then follows from the definitions of $\mathcal{I}_o$, $\mathcal{I}_i$, $\mathcal{I}_\emptyset$ and a union bound, using that $G$ is countable.
\end{proof}

\begin{remark}
Alimohammadi, Borgs, and Saberi~\cite{MR4597319} studied the same bidirected percolation model on a class of locally convergent finite expanders. They show that the threshold for a giant strongly connected cluster equals the undirected percolation threshold of the local limit, and prove a structure theorem analogous to Corollary~\ref{cor: classification_of_vertices} up to a vanishing number of vertices.

The threshold for a giant strongly connected cluster in finite graphs need not equal $p_\mathrm{sc}$ on the infinite local limit. For example, 
let $d \ge 3$ and let $\mathbb T_d$ denote the $d$-regular tree.
For $d$-regular expanders with girth tending to infinity, the former threshold is $p_c(\mathbb T_d)=(d-1)^{-1}$, whereas $p_\mathrm{sc}(\mathbb T_d)=\sqrt{p_c(\mathbb T_d)}>p_c(\mathbb T_d)$.
\end{remark}

\subsection{Size of strongly connected clusters}
\label{sec:size}

Next, we study the expected size of strongly connected clusters. 
We write
\begin{equation}
\tau_p(x,y) = \Pb_p(x \leftrightarrow y)
\end{equation}
for the two-point function of undirected percolation.
The \textbf{bubble diagram} and the \textbf{open triangle diagram} are defined to be
\begin{equation} \label{eq:def_bubble}
\bubble_p = \sum_{x\in V} \tau_p(0,x)^2 ,
\qquad
\triangle_p = \sup_{z \ne 0} \sum_{ x\ne y }
	\tau_p(0,x) \tau_p(x,y) \tau_p(y,z) ,
\end{equation}
respectively.
Note that, by taking $y=0$ and taking $z$ to be a neighbor of $0$ in the triangle, we have
\begin{equation} \label{eq:bubble_bdd}
\bubble_p = 1 + \sum_{x\ne 0} \tau_p(0,x)^2
\le 1 + \frac 1 p \triangle_p,
\end{equation}
so the bubble is finite when the open triangle is finite.

By the FKG inequality and Lemma~\ref{lemma:directed}, a lower bound on the size of the strongly connected cluster is given by
\begin{equation} \label{eq:susc_lb}
\E_p \vert \mathcal{C}_0^{\mathrm{sc}} \vert
= \sum_{x\in V} \Pb_p (0 \rightleftarrows x)
\ge \sum_{x\in V} \Pb_p (0 \leadsto x) \Pb_p (x \leadsto 0)
= \bubble_p .
\end{equation}
The following result gives a complementary upper bound when the open triangle is small.
Our proof also establishes an upper bound for $\Pb_p(0 \rightleftarrows x)$; see Proposition~\ref{prop:sconnect_infty}.

\begin{theorem} \label{main_thm: susceptibility}
 Let $G$ be a transitive graph.
 Let $p \in (0,1]$ be such that $\triangle_p < 1$. Then
 \begin{equation} \label{eq:susc}
 \E_p \vert \mathcal{C}_0^{\mathrm{sc}} \vert 
 \le \frac{\bubble_p}{p(1 - \triangle_p)}
 \le \frac2{p^2(1 - \triangle_p)} .
 \end{equation}
\end{theorem}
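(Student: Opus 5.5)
Our plan is to bound $\Pb_p(0\rightleftarrows x)$ for each $x$ and sum over $x$. On $\{0\rightleftarrows x\}$, $x\ne 0$, there is an open directed path $\gamma$ from $0$ to $x$ and an open directed path $\gamma'$ from $x$ back to $0$. The directed edges of $\gamma$ and $\gamma'$ are distinct, since an edge and its reversal are different directed edges. Where the two paths use the same undirected edge in opposite directions, we still have disjoint witnesses. Where they share a directed edge, we can reroute. The event $\{0\leadsto x\}\circ\{x\leadsto 0\}$ is not automatic, however: a shortest return path may reuse directed edges of the forward path. We therefore decompose. Let $\gamma'$ be an open path from $x$ to $0$, and let $y$ be the first vertex along $\gamma'$ that lies in $\mathcal C_0^{\mathrm{out}}$ reached "before" in a suitable sense. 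More cleanly, we use the following structure. Consider $\mathcal C^{\mathrm{in}}_0$. Since $x\leadsto 0$, the vertex $x$ lies in $\mathcal C_0^{\mathrm{in}}$. The forward path from $0$ to $x$ must at some point leave the region from which $0$ is reachable only through certain edges; this gives a pivotal-type decomposition.

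The concrete decomposition we aim for is the following. Take an open directed path from $0$ to $x$ and let $y$ be the last vertex on it such that the portion from $0$ to $y$ and a return path from $y$ to $0$ can be chosen edge-disjointly. We expect to derive a BK-type bound
\[
\Pb_p(0\rightleftarrows x)\le \tau_p(0,x)^2+\sum_{y\ne x}\ \sum_{(u,v)} \Pb_p\bigl(\{0\rightleftarrows u\}\circ\{(u,v)\text{ open}\}\circ\{v\leadsto x\}\circ\{x\leadsto y\}\cdots\bigr),
\]
which we would organize as a recursion of the form
\[
\Pb_p(0\rightleftarrows x)\le \tau_p(0,x)^2+\frac1p\sum_{u}\Pb_p(0\rightleftarrows u)\,\Delta(u,x),
\]
with $\sup_u\sum_x\Delta(u,x)\le\triangle_p$ and with the $1/p$ accounting for the single directed edge used in both directions. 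Summing over $x$ and using transitivity gives $\chi\le\bubble_p+\chi\triangle_p/p$ in a crude version. We would instead aim for the sharper form $\chi\le \bubble_p/p+\triangle_p\chi$, which rearranges to the first inequality in \eqref{eq:susc}. The second inequality then follows from \eqref{eq:bubble_bdd}, since $1+\triangle_p/p\le 2/p$.

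The key steps are as follows.
\begin{enumerate}
\item Use Lemma~\ref{lemma:directed} together with BK (van den Berg--Kesten) for directed edges to get $\Pb_p(\{0\leadsto x\}\circ\{x\leadsto 0\})\le\tau_p(0,x)^2$.
\item Handle the failure of disjoint occurrence by a first-shared-edge decomposition, which produces a triangle $\tau_p(u,x)\tau_p(x,y)\tau_p(y,\cdot)$ attached to a strongly connected piece.
\item Iterate or solve the resulting linear inequality using $\triangle_p<1$, working with finite-volume truncations so that $\chi$ is finite a priori.
\end{enumerate}

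The main obstacle is step 2. We must show that when the two paths share directed edges, the configuration decomposes into disjoint occurrences consisting of a strongly connected piece up to some $u$, and three disjoint directed connections $u\leadsto x$, $x\leadsto y$, and $y\leadsto$ (back into the piece), forming an open triangle with $y\ne x$. We would first try taking $y$ to be the first vertex on the return path from $x$ that belongs to the forward path, which yields the triangle. The remaining difficulty is ensuring that the prefix is again strongly connected with correct disjointness, and that the required $z\ne0$ condition (and the factor $1/p$) arises from a single connecting edge.
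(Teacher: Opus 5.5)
There is a genuine gap, and it is where you place the ``main obstacle'': the decomposition in step 2 is never constructed, and the recursion you aim for is not the one the geometry supplies. Your candidate choices of $y$ are degenerate. Every vertex of $\gamma'$ already lies in $\mathcal C_0^{\mathrm{out}}$, and the first vertex of $\gamma'$ on the forward path is $x$ itself. The opening claim that $\gamma$ and $\gamma'$ use distinct directed edges is also false, as you concede a sentence later.

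If you carry out the first-shared-edge argument, take the first directed edge $(u,u')$ of $\gamma$ that $\gamma'$ also uses, and let $v$ be the last vertex of $\gamma$ lying on $\gamma'[u,0]$. What you get is the disjoint occurrence
\[
(0\leadsto u)\circ(u\leadsto v)\circ(v\leadsto 0)\circ(v\leadsto x\leadsto u).
\]
This is a loop through $0$ with \emph{two} distinct interface vertices $u\neq v$. The leftover event $\{v\leadsto x\leadsto u\}$ is a three-point event, not a strong connection $\{0\rightleftarrows u\}$; reversing roles just produces $\{v\leadsto 0\leadsto u\}$ instead. So an inequality $\Pb_p(0\rightleftarrows x)\le\tau_p(0,x)^2+\frac1p\sum_u\Pb_p(0\rightleftarrows u)\Delta(u,x)$ with a single interface vertex has no derivation here. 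The ``sharper'' form $\chi\le\bubble_p/p+\triangle_p\chi$ is asserted without any mechanism. The crude version would in any case need $\triangle_p<p$, not $\triangle_p<1$.

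The missing idea is to prove the decomposition for the general event $\{0\leadsto x\leadsto y\}$, as in Lemma~\ref{lem:3pt_decomp}, so that it applies again to the leftover factor $\Pb_p(v\leadsto x\leadsto u)$.

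\begin{itemize}
\item Iterating gives a ladder of diagrams $U_p\supk n(0;u,v)$, each rung an open triangle, with $\sum_{u\ne v}U_p\supk n(0;u,v)\le\triangle_p^n/p$.
\item The single factor $1/p$ enters once, from $\tau_p(v,0)\le\tau_p(v,z)/p$ for a neighbor $z$ of $0$. This turns the closed first loop into an open triangle with $z\neq0$. It does not come from an edge ``used in both directions''; in the bidirected model the two orientations are independent and cause no BK issue.
\item The remainder after $N$ steps is at most $\triangle_p^{N+1}/p\to0$, using $\Pb_p(v\leadsto x\leadsto u)\le1$. So no a priori finiteness or finite-volume truncation is needed.
\item Summing over $x$ with $\sum_x\tau_p(u,x)\tau_p(v,x)\le\bubble_p$ (Cauchy--Schwarz and transitivity) yields $\E_p|\mathcal C_0^{\mathrm{sc}}|\le\bubble_p\bigl(1+\sum_{n\ge1}\triangle_p^n/p\bigr)\le\bubble_p/(p(1-\triangle_p))$.
\end{itemize}

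Your derivation of the second inequality from \eqref{eq:bubble_bdd} is fine.
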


\begin{example}
For the integer lattice $\mathbb Z^d$, 
we know that $p_\mathrm{sc} = p_c$ by Theorem~\ref{main_thm: equality}. 
If $d \ge 11$, we also know $\triangle_{p_c} \le O(1/d) \le 0.53562$ by~\cite{HaraSlade90,FitznerHofstad17}.
Theorem~\ref{main_thm: susceptibility} therefore implies
 \begin{equation}
 \E_{p_\mathrm{sc}(\Z^d)} \vert \mathcal{C}_0^{\mathrm{sc}} \vert <\infty
 \qquad (d \ge 11).
 \end{equation}
On the other hand, it is well known that $\bubble_{p_c} = \infty$ when $d=2$ (see Appendix~\ref{app:bubble}), so~\eqref{eq:susc_lb} implies that
$\E_{p_\mathrm{sc}(\Z^2)} \vert \mathcal{C}_0^{\mathrm{sc}} \vert = \infty$.
The belief that $\bubble_{p_c(\Z^d)} = \infty$ if and only if $d \le 4$ leads to the following question.
\end{example}

\begin{question} \label{question:bubble_diagram_infinite}
 For $\Z^d$, do we have $\E_{p_\mathrm{sc}(\Z^d)} \vert \mathcal{C}_0^{\mathrm{sc}} \vert = \infty$ if and only if $d \le 4$?
\end{question}

Our next result provides a sufficient condition for the strict inequality $p_c < p_\mathrm{sc}$. 
Let
\begin{align} \label{def: p_open_triangle}
p_{\triangle} 
&= \sup \Bigl\{ p : 
 \triangle_p = \sup_{z \ne 0} \sum_{ x\ne y }
	\tau_p(0,x) \tau_p(x,y) \tau_p(y,z)
 < \infty \Bigr\}
 \nonumber \\
&= \sup \Bigl\{ p : 
 \sup_{z \in V} \sum_{ x, y\in V }
	\tau_p(0,x) \tau_p(x,y) \tau_p(y,z)
 < \infty \Bigr\} 
\end{align}
denote the finite-triangle threshold.

\begin{theorem} \label{main_thm: strict}
 Let $G$ be a transitive graph. 
 If $p_c < p_\triangle$ and $\triangle_{p_c} < 1$,
 then $p_c < p_\mathrm{sc}$.
\end{theorem}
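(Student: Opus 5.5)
Assume $p_c<p_\triangle$ and $\triangle_{p_c}<1$. The plan is to find some $p>p_c$ with $\E_p|\mathcal C_0^{\mathrm{sc}}|<\infty$, which forces $p\le p_\mathrm{sc}$ and hence $p_c<p_\mathrm{sc}$. The tool is Theorem~\ref{main_thm: susceptibility}: it suffices to find $p>p_c$ with $\triangle_p<1$. Then $\E_p|\mathcal C_0^{\mathrm{sc}}|\le 2/(p^2(1-\triangle_p))<\infty$, so $\Pb_p(|\mathcal C_0^{\mathrm{sc}}|=\infty)=0$ and therefore $p\le p_\mathrm{sc}$. (The theorem needs $p>0$; this holds since $p>p_c\ge0$.)

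The key step is \emph{right-continuity} of $p\mapsto\triangle_p$ at $p_c$, on the interval $[p_c,p_\triangle)$ where the triangle is finite. I would prove it as follows.

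First, for each fixed $x,y$, the two-point function $\tau_p(x,y)=\Pb_p(x\leftrightarrow y)$ is right-continuous in $p$. It is a decreasing limit of the continuous, increasing functions $\Pb_p(x\leftrightarrow y \text{ within the ball } B_n)$, since the events increase to $\{x\leftrightarrow y\}$. A decreasing limit of continuous functions is upper semicontinuous, and an increasing upper semicontinuous function is right-continuous.

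Second, fix $q\in(p_c,p_\triangle)$. For $p\in[p_c,q]$, monotonicity gives the domination
\[
\tau_p(0,x)\tau_p(x,y)\tau_p(y,z)\le\tau_q(0,x)\tau_q(x,y)\tau_q(y,z),
\]
and the right-hand side is summable over $x,y$ uniformly in $z$, because $\triangle_q<\infty$. Dominated convergence then shows that each inner sum $T_p(z):=\sum_{x\ne y}\tau_p(0,x)\tau_p(x,y)\tau_p(y,z)$ is right-continuous at $p_c$.

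The main obstacle is the supremum over $z$: a pointwise limit does not commute with $\sup_z$ unless the convergence is uniform in $z$. I would handle it by a tail estimate. Choose $q'\in(q,p_\triangle)$ and exploit uniform summability to show that
\[
\sup_z \sum_{(x,y)\notin F}\tau_q(0,x)\tau_q(x,y)\tau_q(y,z)\to0
\]
as the finite set $F$ grows. Alternatively, split according to whether $z$ is close to or far from $0$: for $z$ far, $\tau_q(y,z)$ restricted to bounded $y$ is small, provided $\tau_q(0,z)\to0$. Then for $p\in[p_c,q]$ and any $z$, $T_p(z)-T_{p_c}(z)$ is bounded by a finite-sum difference, which is continuous and uniform over $z$ by transitivity and locality, plus twice the tail.

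If uniformity proves delicate, a fallback is differential: bound $\frac{d}{dp}\tau_p(x,y)$ via Russo's formula and BK by $\sum_{(u,v)}\tau_p(x,u)\tau_p(v,y)$. This gives $\frac{d}{dp}\triangle_p\le C(q)$ on $[p_c,q]$ in terms of finite diagrams at $q$. One would then need finiteness of a square diagram at $q$, obtained from the triangle at $q$ together with the bubble bound~\eqref{eq:bubble_bdd}. This yields Lipschitz continuity, and hence $\triangle_p<1$ for $p$ slightly above $p_c$.
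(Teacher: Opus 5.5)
Your reduction matches the paper's: find $p>p_c$ with $\triangle_p<1$ and apply Theorem~\ref{main_thm: susceptibility}. You also correctly locate the crux, which is passing $p\downarrow p_c$ through $\sup_z$. However, you do not actually resolve it. A uniform bound $\sup_z T_q(z)<\infty$ does not give uniformly small tails, and your tail claim is much stronger than it looks. Since the bubble is finite at $q$ by \eqref{eq:bubble_bdd}, $\tau_q(y,z)\to0$ as $z\to\infty$ for each fixed $y$. Hence $\sup_z\sum_{(x,y)\notin F}\tau_q(0,x)\tau_q(x,y)\tau_q(y,z)\to0$ would force $\lim_{z\to\infty}T_q(z)=0$, which is the open triangle condition at $q$. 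That is exactly the nontrivial input the paper imports in Appendix~\ref{app:cont} from \cite{10.1214/09-AIHP352}: a finite triangle implies that the open triangle vanishes at infinity. Nothing in your sketch supplies it. Your near/far split only handles the part of the sum with $x,y$ in a bounded set, not the mass spread between $0$ and $z$.

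Given that input, the proof closes easily. Pick $R$ with $\sup_{|z|>R}T_q(z)<1$. Then for $p\in[p_c,q]$ one has $\triangle_p\le\max\{\max_{0<|z|\le R}T_p(z),\,\sup_{|z|>R}T_q(z)\}$, and the finite maximum tends to at most $\triangle_{p_c}<1$ as $p\downarrow p_c$. Your differential fallback does not bypass this. The derivative of $T_p(z)$ is bounded by open four-chain diagrams, and these are not controlled by the triangle and bubble in general. For the convolution kernel $(1+|x|)^{2-d}$ on $\Z^7$, the bubble and triangle are finite but the four-chain diverges. So you would need something like $\|\tau_q\|_{2\to2}<\infty$, i.e.\ $q<p_{2\to2}$, which is not assumed.

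Separately, your proof of pointwise right-continuity of $\tau_p(x,y)$ has the monotonicity backwards. The events $\{x\leftrightarrow y \text{ within } B_n\}$ increase in $n$, so $\tau_p(x,y)$ is an increasing limit of continuous functions. It is therefore lower semicontinuous, which for a nondecreasing function of $p$ gives only left-continuity. Right-continuity is the delicate direction; it can fail at a parameter where distinct infinite clusters merge immediately.

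Here it can be repaired with the Russo--BK estimate from your fallback. Let $D$ be the degree of $G$. Using $\tau_s(v,y)\le\tau_s(u,y)/s$ for $u\sim v$ and Cauchy--Schwarz, one gets $\frac{d}{ds}\Pb_s(x\leftrightarrow y\text{ within }B_n)\le\sum_{u\sim v}\tau_s(x,u)\tau_s(v,y)\le\frac{D}{s}\bubble_s$. Integrating and letting $n\to\infty$ gives $0\le\tau_{p'}(x,y)-\tau_p(x,y)\le(p'-p)\frac{D}{p}\bubble_{p'}$ uniformly in $x,y$, and this is finite for $p'<p_\triangle$. With this and the open-triangle input above, your outline becomes the paper's proof.
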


This theorem is mostly interesting for non-amenable graphs for which $p_c < p_\triangle$ can happen.
(Non-amenable graphs are conjectured to satisfy the stronger inequality $p_c < p_{2\to 2}$, where $p_{2\to 2}$ is the $L^2$ boundedness threshold~\cite[Conjecture~1.3]{Hutchcroft20}.)
We give two examples to which Theorem~\ref{main_thm: strict} applies, after its short proof.

\begin{proof}
Suppose $p_c < p_\triangle$ and $\triangle_{p_c} < 1$.
We use the continuity of the map $p \mapsto \triangle_p$ below $p_\triangle$, which is proved in Appendix~\ref{app:cont},
to pick $p_1 \in (p_c, p_\triangle)$ at which $\triangle_{p_1} < 1$ still holds.
Then, by applying Theorem~\ref{main_thm: susceptibility} at $p_1$, we learn that $\E_{p_1} \vert \mathcal{C}_0^{\mathrm{sc}} \vert < \infty$.
This implies that there are almost surely no infinite strongly connected clusters at $p_1$, so we have $p_\mathrm{sc} \ge p_1 > p_c$.
\end{proof}

\begin{example} 
 Let $G_k = \mathbb{T}_k \times \Z$, where $\mathbb{T}_k$ is the $k$-regular tree. 
 For all sufficiently large $k$, we have $p_c(G_k) < p_\triangle(G_k)$ and $\triangle_{p_c(G_k)} < 1$, so Theorem~\ref{main_thm: strict} applies.
 Indeed, we have $p_c(G_k) \le 1/(k-1)$ since $\mathbb{T}_k$ is a subgraph of $G_k$.
 On the other hand, we also have $\triangle_p \le 1/(1-p\|A\|_{2\to2})^3$, where $A$ is the adjacency operator of $G_k$ and satisfies $\|A\|_{2\to2} = 2\sqrt{k-1}+2$.
 Therefore, 
 \begin{equation}
 p_\triangle (G_k)
 \ge \frac 1 {\|A\|_{2\to2}} 
 = \frac 1 {2\sqrt{k-1}+2} 
 > \frac 1 {k-1}
 \ge p_c(G_k)
 \end{equation}
 for all $k \ge 9$.
 Moreover, by Theorem~3.1 of~\cite{MR1833805}, we have $\triangle_{p_c(G_k)} < 1 $ for all large $k$.
 
\end{example}

Next, we show that, for every non-amenable graph, as long as we make it sufficiently ``spread-out,'' the resulting graph satisfies both conditions in Theorem~\ref{main_thm: strict}. It gives us a large class of graphs that have the property that $p_c < p_\mathrm{sc}$.
Given a graph $G = (V, E)$ and an integer $L \ge 1$, 
let $G_L$ be the graph with the same vertex set $V$ and edge set
\begin{equation}
 E_L := \{ \{x, y\} : x \neq y,\, 0 < d(x,y) \le L \} ,
\end{equation}
where $d(x,y)$ is the graph distance on the original $G$.

\begin{proposition} \label{cor: spread-out_non-amenable}
 Let $G$ be a non-amenable transitive graph. 
 Then there exists $L_0 \ge 1$, depending on $G$, such that
 $p_c(G_L) < p_\mathrm{sc}(G_L)$ for every $L \ge L_0$.
\end{proposition}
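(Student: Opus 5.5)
The plan is to verify the two hypotheses of Theorem~\ref{main_thm: strict} for $G_L$ with $L$ large: $p_c(G_L) < p_\triangle(G_L)$ and $\triangle_{p_c(G_L)}(G_L) < 1$. Both will come from comparing percolation with the operator norm of the adjacency operator $A_L$ of $G_L$ on $\ell^2(V)$, as in the $\mathbb T_k\times\Z$ example.

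\textbf{Step 1 (upper bound on the triangle).}
Bound $\tau_p(x,y) \le \sum_{n\ge0} p^n A_L^n(x,y) = (I-pA_L)^{-1}(x,y)$ by counting open paths. When $p\|A_L\|_{2\to2}<1$, the triangle is a matrix entry of $T^3$ with $T=(I-pA_L)^{-1}$, and entries are bounded by $\|T\|_{2\to 2}^3$. This gives $\triangle_p \le (1-p\|A_L\|_{2\to2})^{-3}$, so $p_\triangle(G_L) \ge 1/\|A_L\|_{2\to2}$.

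For the smallness condition we need more: the open triangle excludes the degenerate terms, so the main contribution to be subtracted is $x=0$ together with short configurations. Write $\triangle_p \le \sup_{z\neq 0}\bigl[(T^3)(0,z) - \text{(terms with } x=y)\bigr]$. Since $T = I + pA_L T$, expanding shows that the remaining terms are bounded by $C\, p\|A_L\|\cdot(\text{sup of } pA_L T^k \text{ entries})$. Entries of $pA_L T^k$ are at most $p\|A_L\|_{2\to2}\cdot\sup_x\|A_L T^{k-1}\delta_x\|_2/\|A_L\|$, and roughly $p\, \|T\|^k$ times $\sqrt{\deg_L}$ up to norms. I would aim for a bound of the form
\[
\triangle_p \le C\,\frac{p\,\|A_L\|_{2\to2}+ p\sqrt{D_L}}{(1-p\|A_L\|_{2\to2})^{4}},
\]
where $D_L=|B_L(0)|$ is the degree of $G_L$. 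The point is that a single nontrivial step costs at most $p\sqrt{D_L}$ in $\ell^2\to\ell^\infty$ terms, since $\|A_L\delta_x\|_2=\sqrt{D_L}$.

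\textbf{Step 2 (spectral gap from non-amenability).}
The key input is $\|A_L\|_{2\to2} = o(D_L)$ as $L\to\infty$. By Kesten's theorem the simple random walk on $G$ has spectral radius $\rho<1$. The operator $A_L$ is the sum of indicators of spheres, $A_L=\sum_{r=1}^L S_r$. On a non-amenable transitive graph one has $\|S_r\|_{2\to2}\le C r\,\rho_*^{\,r}\,|S_r|^{1/2}$-type bounds, or more crudely $\|\mathbf 1_{B_L}*\|_{2\to2} \le |B_L|^{1-\delta}$ for some $\delta>0$ (a Kesten/Kazhdan-type decay: the return probability $P^{n}(0,0)\le\rho^n$ forces ball-averaging operators to have norm $\le |B_L|^{-\delta}$ times $|B_L|$). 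I would prove $\|A_L\|/D_L\to0$ via the convolution picture. The averaging operator $M_L=A_L/D_L$ is self-adjoint, and for $f\in\ell^2$ we have $\|M_L f\|^2\le\langle M_L^2f,f\rangle$. Non-amenability implies that the uniform measure on a large ball has $\ell^2$ operator norm tending to $0$; this is the standard fact that a transitive graph is amenable iff averaging over balls has norm near $1$ (Følner/Kesten).

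\textbf{Step 3 (conclusion).}
We have $p_c(G_L)\le$ roughly $1/D_L$ times a constant tending to $1$; it suffices to use $p_c(G_L)\le 1/(D_L-1)$ via a $D_L$-regular-tree-like comparison, or simply $p_c\le C/D_L$ from a tree contained in $G_L$. On the other side, $p_\triangle \ge 1/\|A_L\| \gg 1/D_L$. For $p\le p_c(G_L)$ we have $p\|A_L\|\to0$ and $p\sqrt{D_L}\lesssim D_L^{-1/2}\to0$, so $\triangle_{p_c(G_L)}\to0<1$. Theorem~\ref{main_thm: strict} then gives the claim.

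The main obstacle is Step 2: a clean quantitative proof that $\|A_L\|_{2\to2}/|B_L|\to 0$ for non-amenable transitive $G$. A secondary issue is the upper bound $p_c(G_L)=O(1/D_L)$; for this I would use a branching/exploration comparison on $G_L$, with $p_c(G_L)\le (1+o(1))/D_L$ obtained by finding a spanning subgraph of $G_L$ that is close to a $D_L$-regular tree locally.
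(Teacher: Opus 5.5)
There is a genuine gap, and it is the one you flagged. The claim in Step~2 that $\|A_L\|_{2\to2}/D_L\to0$ is not a standard fact, and it is false for general non-amenable transitive graphs. Non-amenability only gives that the averages $A_L/D_L$ have norm bounded away from $1$, uniformly in $L$. The paper proves exactly this, by smoothing $\mathbf 1_F$ over $B(x,L-1)$, comparing $|\partial_{E_L}F|/(D_L|F|)$ with $\iota_E(G)$ via the coarea formula, and applying Cheeger's inequality. Decay of ball averages to $0$, like the sphere bounds you quote, is a rapid-decay (Haagerup-type) phenomenon, not a consequence of non-amenability.

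For a counterexample, let $G$ be the Cayley graph of $F_2\times\Lambda$ with the product generating set. Take $\Lambda$ amenable with $|B_\Lambda(n)|\asymp\beta^n$ for some $\beta>3$, e.g.\ a lamplighter group with a suitably large generating set. Then $|B(0,L)|=\sum_{j\le L}|S_{F_2}(j)|\,|B_\Lambda(L-j)|\asymp\beta^L\asymp|B_\Lambda(L)|$. Testing $A_L$ against $\delta_e\otimes|F|^{-1/2}\mathbf 1_F$, with $F\subseteq\Lambda$ running through a F\o lner sequence, gives $\|A_L\|_{2\to2}\ge|B_\Lambda(L)|-1\ge c\,D_L$ for all $L$. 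The constant $c$ is close to $1$ when $\beta$ is large.

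Your Step~3 depends on the false claim in two places. First, your triangle bound has $p\|A_L\|_{2\to2}$ in the numerator. At $p\approx p_c(G_L)\approx 1/D_L$ this is of order $\|A_L\|/D_L$, which is not small. Second, $p_\triangle\ge 1/\|A_L\|$ is then only of order $1/D_L$. So a bound $p_c(G_L)\le C/D_L$ with an unspecified $C$ does not give $p_c<p_\triangle$.

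Your framework does go through with the uniform gap $\sup_L\|A_L\|_{2\to2}/D_L\le\rho^*<1$, provided two inputs are sharpened.
\begin{enumerate}
\item Remove the $p\|A_L\|$ term from the triangle bound. With $T=(I-pA_L)^{-1}$ as in your Step~1, one has $\tau_p(x,y)\le (T-I)(x,y)$ for $x\ne y$, and $T$ commutes with $A_L$. Hence the open triangle is at most $\langle\delta_0,T^2(T-I)\delta_z\rangle\le\|T\|^2\,\|pTA_L\delta_z\|_2\le p\sqrt{D_L}/(1-p\|A_L\|)^3$. This is $o(1)$ at $p=s/D_L$ for any fixed $s<1/\rho^*$.
\item Prove $p_c(G_L)\le t/D_L$ for some fixed $t\in(1,1/\rho^*)$. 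The constant matters, since $\rho^*$ may be close to $1$. Your tree comparison cannot give this: a $D_L$-regular graph containing a $D_L$-regular tree as a subgraph is that tree. The paper instead uses a ghost-field second-moment bound, $\Pb_p(0\leftrightarrow\mathsf{mark})\ge\sum_{v\sim0}\Pb_p(A_v)-\frac12\sum_{u\ne v}\Pb_p(A_u\cap A_v)$. Here $A_v$ is the event that $\{0,v\}$ is open and $v$ reaches a mark without using edges at $0$. This is estimated with BK and the same $\ell^2$ bounds, $\sum_{v\sim0}\tau_p(0,v)\le pD_L^{3/2}/(1-p\|A_L\|)$ and $\sum_z\bigl(\sum_{u\sim0}\tau_p(u,z)\bigr)^2\le D_L/(1-p\|A_L\|)^2$.
\end{enumerate}
Together these give $p_c(G_L)\le t/D_L<s/D_L\le p_\triangle(G_L)$ and $\triangle_{p_c(G_L)}\le\triangle_{s/D_L}<1$ for all large $L$.
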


This leads us to the following conjecture:
\begin{conjecture} \label{question: non-amenable-transitive-pc-psc}
 If $G$ is a non-amenable transitive graph, then $p_c < p_\mathrm{sc}$.
\end{conjecture}

In light of Theorem~\ref{main_thm: equality}, 
Conjecture~\ref{question: non-amenable-transitive-pc-psc} would imply that a transitive graph $G$ is non-amenable if and only if $p_c < p_\mathrm{sc}$. Moreover, since we have established $p_\mathrm{sc} \le p_u$, Conjecture~\ref{question: non-amenable-transitive-pc-psc} 
would imply the following conjecture of Benjamini and Schramm: 

\begin{conjecture}[\cite{BenjaminiSchramm96}]
If $G$ is a non-amenable transitive graph, then $p_c < p_u$.
\end{conjecture}

\subsection{Organization}
The remainder of the paper is organized as follows. 
We prove Theorem~\ref{main_thm: structure} in Section~\ref{sec:proof-of-structure}, 
prove Theorem~\ref{main_thm: susceptibility} in Section~\ref{sec:proof-of-susceptibility}, and prove Proposition~\ref{cor: spread-out_non-amenable} in Section~\ref{sec:proof-of-strict}.
There are two additional short appendices containing the proofs of auxiliary lemmas used in Section~\ref{sec:intro}.

\section{Proof of Theorem~\ref{main_thm: structure}} \label{sec:proof-of-structure}

We divide the proof into two parts. In Subsection~\ref{subsec:existence-uniqueness}, we establish the existence and uniqueness of the infinite strongly connected cluster and identify its vertex set as $\mathcal I$. In Subsection~\ref{subsec:structure}, we prove the distributional identities relating the inward and outward clusters to undirected percolation.

\subsection{Existence and uniqueness}
\label{subsec:existence-uniqueness}

Recall that $p_u$ is the uniqueness threshold for the undirected percolation model, defined in~\eqref{def:uniqueness-threshold}.
In this subsection, we first establish the existence and uniqueness of the infinite strongly connected cluster for $p > p_u$.
The next proposition is the key step in proving the uniqueness; it states that, almost surely, every infinite outward cluster intersects every infinite inward cluster.

\begin{proposition} \label{prop-1}
 Let $x, y \in V$ be distinct. Suppose $p > p_u$.
 Then, almost surely on the event $\{|\mathcal{C}^{\mathrm{out}}_x| = \infty,\; |\mathcal{C}^{\mathrm{in}}_y| = \infty\}$, we have $x \leadsto y$.
\end{proposition}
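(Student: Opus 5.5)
The plan is to compare the pair $(\mathcal{C}_x^{\mathrm{out}},\mathcal{C}_y^{\mathrm{in}})$ with a pair of undirected clusters, using an exploration argument, and then invoke uniqueness of the infinite undirected cluster. The key observation is that on the event $\{y\notin\mathcal{C}_x^{\mathrm{out}}\}$ the inward cluster of $y$ must avoid $\mathcal{C}_x^{\mathrm{out}}$ entirely. Indeed, if some $z\in\mathcal{C}_y^{\mathrm{in}}$ were also in $\mathcal{C}_x^{\mathrm{out}}$, then $x\leadsto z\leadsto y$. So, writing $A=\mathcal{C}_x^{\mathrm{out}}$ on this event, $\mathcal{C}_y^{\mathrm{in}}$ is the inward cluster of $y$ in the induced subgraph $G[V\setminus A]$. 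It is determined only by the directed edges with both endpoints in $V\setminus A$. To prove the proposition, it therefore suffices to show
\[
\Pb_p\bigl(|\mathcal{C}_x^{\mathrm{out}}|=\infty,\ |\mathcal{C}_y^{\mathrm{in}}|=\infty,\ y\notin \mathcal{C}_x^{\mathrm{out}}\bigr)=0 .
\]

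Next I would record the domain Markov property for the outward exploration. For any $A\subseteq V$ containing $x$, the event $\{\mathcal{C}_x^{\mathrm{out}}=A\}$ is measurable with respect to the directed edges whose tail lies in $A$: those inside $A$ make $A$ reachable, and those pointing out of $A$ are closed. This family is disjoint from, hence independent of, the directed edges with both endpoints in $V\setminus A$. Consequently the conditional law of the configuration on $\vec E(V\setminus A)$ given $\mathcal{C}_x^{\mathrm{out}}=A$ is still i.i.d.\ Bernoulli($p$).

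Combining this with Lemma~\ref{lemma:directed} applied to the graph $G[V\setminus A]$, the conditional law of $\mathcal{C}_y^{\mathrm{in}}$ on $\{y\notin A\}$ is that of the undirected cluster of $y$ in $G[V\setminus A]$. Lemma~\ref{lemma:directed} uses only the exploration argument and is valid on any locally finite graph, not just transitive ones. The same reasoning in the undirected model shows that, given $\mathcal{C}_x^{\mathrm{un}}=A$ with $y\notin A$, the edges of $G[V\setminus A]$ are i.i.d.\ Bernoulli($p$) and $\mathcal{C}_y^{\mathrm{un}}$ is the undirected cluster of $y$ in $G[V\setminus A]$. Together with Lemma~\ref{lemma:directed} for $\mathcal{C}_x^{\mathrm{out}}$ itself, this gives the joint identity in law
\[
\bigl(\mathcal{C}_x^{\mathrm{out}},\ \mathcal{C}_y^{\mathrm{in}}\mathds{1}_{\{y\notin\mathcal{C}_x^{\mathrm{out}}\}}\bigr)\overset{d}{=}\bigl(\mathcal{C}_x^{\mathrm{un}},\ \mathcal{C}_y^{\mathrm{un}}\mathds{1}_{\{y\notin\mathcal{C}_x^{\mathrm{un}}\}}\bigr).
\]
Hence the displayed probability equals $\Pb_p(|\mathcal{C}_x^{\mathrm{un}}|=\infty,\ |\mathcal{C}_y^{\mathrm{un}}|=\infty,\ x\not\leftrightarrow y)$. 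This probability is zero, because for $p>p_u$ there is almost surely a unique infinite undirected cluster.

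The main obstacle I anticipate is making the conditioning rigorous, since $A$ ranges over possibly uncountably many infinite sets and each event $\{\mathcal{C}_x^{\mathrm{out}}=A\}$ may have probability zero. I would handle this by replacing pointwise conditioning with a product-measure (Fubini) computation. For a measurable functional $F$ of the pair, I would write $\E_p[F(\mathcal{C}_x^{\mathrm{out}},\mathcal{C}_y^{\mathrm{in}})\mathds{1}_{\{y\notin\mathcal{C}_x^{\mathrm{out}}\}}]$ by resampling the configuration on edges with tail outside $\mathcal{C}_x^{\mathrm{out}}$ by an independent copy. A cleaner alternative is to prove the identity first for the truncated sets $\mathcal{C}_x^{\mathrm{out}}\cap B_n$ and $\mathcal{C}_y^{\mathrm{in}}\cap B_n$, explored inside finite balls $B_n$ where only countably many values occur, and then let $n\to\infty$. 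Either way, the remainder is the routine check that the same exploration gives the matching undirected statement.
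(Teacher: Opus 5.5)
Your proposal is correct and follows essentially the same route as the paper. You reveal $\mathcal{C}_x^{\mathrm{out}}$ first, note that on $\{y\notin\mathcal{C}_x^{\mathrm{out}}\}$ the inward cluster of $y$ depends only on unrevealed edges inside $V\setminus\mathcal{C}_x^{\mathrm{out}}$, compare with undirected percolation, and invoke uniqueness of the infinite cluster. The difference is only in packaging: the paper runs both explorations sequentially and assigns each revealed directed bit to its undirected edge, producing one Bernoulli($p$) configuration $\omega'$ in which $\mathcal{C}_x^{\mathrm{out}}$ is a full cluster and $\mathcal{C}_y^{\mathrm{in}}$ is a disjoint connected set, which avoids the conditioning-on-null-events issue you flag, whereas your joint identity in law is slightly stronger but needs the truncation or resampling step you sketch to be made rigorous.
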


The next proposition concerns the intersection of two outward clusters.

\begin{proposition} \label{prop-2}
 Let $x, y \in V$ be distinct. Suppose $p > p_u$.
 Then, almost surely on the event $\{|\mathcal{C}_x^{\mathrm{out}}| = \infty,\; |\mathcal{C}_y^{\mathrm{out}}| = \infty\}$, the sets $\mathcal{C}_x^{\mathrm{out}} \setminus \mathcal{C}_y^{\mathrm{out}}$ and $\mathcal{C}_y^{\mathrm{out}} \setminus \mathcal{C}_x^{\mathrm{out}}$ are both finite. The analogous statement holds for the inward clusters.
\end{proposition}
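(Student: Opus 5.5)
The plan is to derive Proposition~\ref{prop-2} from Proposition~\ref{prop-1} together with a deterministic König-type argument. By symmetry in $x$ and $y$, it suffices to show that $\mathcal{C}_x^{\mathrm{out}} \setminus \mathcal{C}_y^{\mathrm{out}}$ is almost surely finite on the event $\{|\mathcal{C}_x^{\mathrm{out}}| = \infty,\ |\mathcal{C}_y^{\mathrm{out}}| = \infty\}$. The statement for inward clusters then follows by reversing the orientation of every directed edge. This reversal preserves $\Pb_p$ and exchanges inward with outward clusters.

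\textbf{Step 1.} Apply Proposition~\ref{prop-1} to the pair $(y,z)$ for every $z \ne y$, and take a countable union over $z \in V$. This yields an almost sure event on which the following holds: if $|\mathcal{C}_y^{\mathrm{out}}| = \infty$, then every vertex $z$ with $|\mathcal{C}_z^{\mathrm{in}}| = \infty$ satisfies $y \leadsto z$. Equivalently, on $\{|\mathcal{C}_y^{\mathrm{out}}|=\infty\}$, every $z$ in $W := V \setminus \mathcal{C}_y^{\mathrm{out}}$ has $|\mathcal{C}_z^{\mathrm{in}}| < \infty$.

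\textbf{Step 2.} From here the argument is deterministic. The set $\mathcal{C}_y^{\mathrm{out}}$ is closed under following open directed edges. Hence any open directed path that enters $\mathcal{C}_y^{\mathrm{out}}$ stays there. Consequently, every $z \in S := \mathcal{C}_x^{\mathrm{out}} \cap W$ is reached from $x$ by an open directed path all of whose vertices lie in $W$.

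If $x \notin W$, then $S = \varnothing$ and we are done. Otherwise, $S$ is exactly the outward cluster of $x$ in the open directed subgraph induced on $W$. Suppose, for contradiction, that $S$ is infinite. Since $G$ is locally finite, König's lemma, applied to a breadth-first tree of open paths inside $W$, gives an infinite self-avoiding open directed ray $x = v_0 \to v_1 \to v_2 \to \cdots$ with all $v_i \in W$. Then for every $n$, the set $\mathcal{C}_{v_n}^{\mathrm{in}}$ contains $v_0, \dots, v_n$. Letting $n$ vary, a single vertex $v_n$ would not suffice on its own, so we argue as follows. Fix $v_1$. Each $v_n$ with $n \ge 1$ satisfies $v_1 \leadsto v_n$, so $\mathcal{C}_{v_n}^{\mathrm{in}} \supseteq \{v_0,\dots,v_n\}$, and these sets have unbounded size. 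This is not yet a contradiction for a single vertex.

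The fix is to use the whole ray at once. Every $v_n$ lies in $W$, so by Step 1 each $|\mathcal{C}_{v_n}^{\mathrm{in}}|$ is finite. But $\mathcal{C}_{v_n}^{\mathrm{in}} \supseteq \{v_0,\dots,v_n\}$ only bounds these sizes from below by $n+1$, which is consistent with each being finite. So a genuinely infinite inward cluster is needed instead. For this I would apply Step 1 in reversed form: use Proposition~\ref{prop-1} with the roles swapped, so that the argument is run with the $\mathcal{C}^{\mathrm{out}}_x$ side playing the role of the infinite inward cluster.

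Concretely, the alternative is to apply Proposition~\ref{prop-1} directly to the pair $(x',y)$ along the ray, with the ray providing an infinite outward cluster inside $W$. Each $v_n$ has $|\mathcal{C}^{\mathrm{out}}_{v_n}| = \infty$. I need some $w$ with $|\mathcal{C}^{\mathrm{in}}_w| = \infty$ and $v_n \leadsto w$ forced, which would then show $W$ is entered from the ray into $\mathcal{C}_y^{\mathrm{out}}$.

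\textbf{Main obstacle.} The hardest step is turning the infinite open ray inside $W$ into a contradiction. Having each vertex $v_n$ carry a finite but growing inward cluster is not contradictory by itself. What I would try first is a symmetry and mass-transport type argument, using transitivity and the identity $\Pb_p(x \in \mathcal{C}^{\mathrm{in}}_{\mathcal I}) = \Pb_p(|\mathcal{C}_x^{\mathrm{out}}|=\infty)$. The aim is to show that almost surely no infinite open ray lies in the region of vertices with finite inward cluster. Failing that, I would look for a local modification in the proof of Proposition~\ref{prop-1} showing that an infinite open ray must meet every infinite outward cluster of $y$ when $p>p_u$, exploiting uniqueness of the infinite undirected cluster.
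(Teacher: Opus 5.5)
Your proposal has a genuine gap, and it sits exactly at the ``main obstacle'' you identify. Nothing in Steps~1--2 turns an infinite open ray inside $W=V\setminus\mathcal{C}_y^{\mathrm{out}}$ into a contradiction. Moreover, no argument that uses only the conclusion of Proposition~\ref{prop-1} can do so, because that conclusion (for all pairs of vertices) is deterministically compatible with an infinite $\mathcal{C}_x^{\mathrm{out}}\setminus\mathcal{C}_y^{\mathrm{out}}$. Here is such a configuration. Let $y$ lie in an infinite strongly connected set $\mathcal{I}$ with no open edges leaving it, so that $\mathcal{C}_y^{\mathrm{out}}=\mathcal{I}$. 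Let $x=v_0\to v_1\to\cdots$ be an open ray outside $\mathcal{I}$ in which each $v_n$ also has an open edge into $\mathcal{I}$, and $v_{n-1}\to v_n$ is the only open edge into $v_n$. Then $\mathcal{C}_{v_n}^{\mathrm{in}}=\{v_0,\dots,v_n\}$ is finite, and every vertex with infinite outward cluster reaches every vertex with infinite inward cluster. Yet the whole ray lies in $\mathcal{C}_x^{\mathrm{out}}\setminus\mathcal{C}_y^{\mathrm{out}}$. The same picture shows why your fallback fails: finding $w$ with $|\mathcal{C}_w^{\mathrm{in}}|=\infty$ and $v_n\leadsto w$ only shows that $\mathcal{C}_{v_n}^{\mathrm{out}}$ reaches $\mathcal{C}_y^{\mathrm{out}}$ through side branches, not that the ray itself enters it. The mass-transport idea is never carried out.

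The missing idea is to reuse the exploration coupling from the proof of Proposition~\ref{prop-1}, not its conclusion. This is what the paper does, with the roles of $x$ and $y$ interchanged. First explore $A=\mathcal{C}_y^{\mathrm{out}}$. At each step, take an unexplored undirected edge $\{u,v\}$ with $u$ already discovered and $v$ not, reveal only the directed edge $(u,v)$, and copy its state onto an undirected configuration $\omega'$. Then, keeping those edges marked as explored, explore the outward cluster of $x$ inside $A^c$ in the same way. Each undirected edge is revealed at most once, so $\omega'$ (completed by independent coins on unexplored edges) has law $\Pb_p$. Every edge of $\partial_E A$ is revealed closed, so $A$ is an $\omega'$-cluster. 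The second exploration discovers exactly your set $S=\mathcal{C}_x^{\mathrm{out}}\setminus\mathcal{C}_y^{\mathrm{out}}$, by your own observation that $A$ is closed under open outward edges. This set $S$ is $\omega'$-connected and disjoint from $A$. Since $\omega'$ almost surely has at most one infinite cluster, $A$ and $S$ cannot both be infinite. In this language, your infinite ray is just an infinite $\omega'$-connected set separated from the infinite $\omega'$-cluster $A$, which is the contradiction you were missing. Step~1 becomes unnecessary, and your symmetry and orientation-reversal reductions then complete the proof.
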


The proofs of Propositions~\ref{prop-1} and~\ref{prop-2} both use a coupling argument with undirected percolation, so we present them together.

\begin{proof}[Proof of Propositions~\ref{prop-1} and~\ref{prop-2}]
 We first prove Proposition~\ref{prop-1}. 
 Since $G$ is connected and locally finite, the set of undirected edges $E$ is countable. 
 We fix an enumeration of $E$.

 We explore $\mathcal C_x^{\mathrm{out}}$ by revealing the states of directed edges in $\omega$ one at a time 
 while simultaneously constructing an undirected percolation configuration $\omega'$.
 Start with $A_0=\{x\}$ and declare every undirected edge \textbf{unexplored}.
 Assume that we have already completed $n$ steps of the exploration and have defined $A_{n}$.
 Then, in the $(n{+}1)$-th step, we choose the first unexplored undirected edge $e=\{u,v\}$ in the fixed enumeration such that $u \in A_n$ and $v \notin A_n$.
 Reveal $\omega(u,v)$, 
 set $e$ to be \textbf{explored}, and
 declare $e$ to be \textbf{present} in $\omega'$ if $\omega(u,v)=1$ and to be \textbf{deleted} otherwise.
 Set $A_{n+1}:=A_n\cup\{v\}$ if $\omega(u,v)=1$ and $A_{n+1}:=A_n$ otherwise.
 The exploration stops when there is no unexplored undirected edge joining $A_n$ to $A_n^c$.
 Observe that, during the exploration, each undirected edge is selected at most once.
 At each step, we reveal the state of one directed edge in $\omega$ 
 and assign the same state to the corresponding undirected edge in $\omega'$.

 We now verify that $A := \bigcup_{n\ge 0} A_n$ is the outward cluster of $x$ in $\omega$:
 It is clear that every vertex in $A$ is reachable from $x$ via a directed open path. 
 Conversely, suppose that
$
 x=v_0 \leadsto v_1 \leadsto \cdots \leadsto v_k=z
$
 is an open directed path. Once $v_i$ has been discovered, either $v_{i+1}$ is already discovered or the edge $\{v_i,v_{i+1}\}$ remains unexplored. 
 Since only finitely many edges precede it in the enumeration, we either discover $v_{i+1}$ before the edge $\{v_i,v_{i+1}\}$ is examined, or we examine this edge and discover $v_{i+1}$.
 Induction along the path therefore gives
 \begin{equation} \label{eq:exploration}
 A=\mathcal C_x^{\mathrm{out}}.
 \end{equation}
 Moreover, the present edges used to discover vertices ensure $A$ is connected in $\omega'$. 
 Every edge joining $A$ to $A^c$ is eventually examined and declared deleted: 
 otherwise its endpoint in $A^c$ would be discovered.
 Hence $A$ is an undirected cluster of $\omega'$.

 Now, we assume that $y\notin A$.
 We do a second exploration to find all the vertices that can reach $y$ inside $A^c = V \setminus A$.
 Throughout this second exploration, every undirected edge selected during the construction of $A$ is regarded as already explored.
 Let $B_0=\{y\}$ and iteratively construct $B_1,B_2,\ldots$, where $B_n$ is the collection of discovered vertices after the $n$-th step.
 Given $B_n$, we choose the first unexplored undirected edge $e=\{u,v\}$ such that $u \in B_n$ and $v \in A^c \setminus B_n$.
 Reveal $\omega(v,u)$, 
 set $e$ to be explored, and
 declare $e$ to be present in $\omega'$ if $\omega(v,u)=1$ and to be deleted otherwise.
 Set $B_{n+1}:=B_n\cup\{v\}$ if $\omega(v,u)=1$ and $B_{n+1}:=B_n$ otherwise. 
Then we have
 \begin{equation}
 B := \bigcup_{n\ge 0} B_n = \{z\in A^c:z\xrightarrow{A^c}y\},
 \end{equation}
 where $z\xrightarrow{A^c}y$ means that there exists a directed open path from $z$ to $y$ that is contained in $A^c$.
 Moreover, the present edges revealed during this exploration ensure that $B$ is connected in $\omega'$.

 Consider the event
 \begin{equation}
 E
 :=
 \bigl\{
 |\mathcal C_x^{\mathrm{out}}|=\infty,\,
 |\mathcal C_y^{\mathrm{in}}|=\infty,\,
 x\nrightarrow y
 \bigr\}.
 \end{equation}
 On $E$, every open directed path reaching $y$ avoids $A$,
 so $B=\mathcal C_y^{\mathrm{in}}$.
 Thus, on $E$, the set $A$ is an infinite undirected cluster of
 $\omega'$, while the infinite connected set $B\subseteq A^c$ lies in a different infinite cluster.
 By the uniqueness of the infinite cluster in undirected percolation when $p>p_u$, 
 we can conclude $\Pb(E)=0$, proving Proposition~\ref{prop-1}.

 To prove Proposition~\ref{prop-2}, we first explore
 $A=\mathcal C_x^{\mathrm{out}}$ as above, and then we explore the outward cluster of
 $y$ inside $A^c$ to construct $B$. In this case, it is straightforward to verify that
 \begin{equation}
 B = \mathcal C_y^{\mathrm{out}}\setminus A.
 \end{equation}
 Now, it follows from the uniqueness of the infinite cluster in the undirected percolation model that at most one of $A=\mathcal C_x^{\mathrm{out}}$ and $B$ is infinite almost surely.
 Interchanging the roles of $x$ and $y$ shows that at most one of $\mathcal C_y^{\mathrm{out}}$ and $\mathcal C_x^{\mathrm{out}} \setminus \mathcal C_y^{\mathrm{out}}$ is infinite almost surely.
 Therefore, on the event that both $\mathcal C_x^{\mathrm{out}}$ and $\mathcal C_y^{\mathrm{out}}$ are infinite, almost surely both $\mathcal C_x^{\mathrm{out}} \setminus \mathcal C_y^{\mathrm{out}}$ and $\mathcal C_y^{\mathrm{out}} \setminus \mathcal C_x^{\mathrm{out}}$ are finite, proving Proposition~\ref{prop-2}.
 Finally, reversing all directed edges in the model
 gives the corresponding statement for inward clusters.
\end{proof}

The next lemma establishes the first part of Theorem~\ref{main_thm: structure}.

\begin{lemma} \label{lemma:I}
Let $p > p_u$. Then, almost surely, there exists a unique infinite strongly connected cluster, and it coincides with 
 \[
 \mathcal I = \{x \in V: |\mathcal{C}_x^{\mathrm{in}}| = \infty, |\mathcal{C}_x^{\mathrm{out}}| = \infty\}.
 \]
\end{lemma}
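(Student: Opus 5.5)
Since $V$ is countable, I will fix a countable family of almost-sure events and work on their intersection. Specifically, I take the intersection over all ordered pairs of distinct $x,y\in V$ of the events given by Proposition~\ref{prop-1}: on $\{|\mathcal C_x^{\mathrm{out}}|=\infty,\ |\mathcal C_y^{\mathrm{in}}|=\infty\}$ we have $x\leadsto y$. Everything below is deterministic on this intersection, except for checking that $\mathcal I\neq\varnothing$. No use of Proposition~\ref{prop-2} seems necessary for this lemma.

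\emph{Step 1: $\mathcal I$ is strongly connected.} Let $x,y\in\mathcal I$ be distinct. Then $|\mathcal C_x^{\mathrm{out}}|=\infty$ and $|\mathcal C_y^{\mathrm{in}}|=\infty$, so Proposition~\ref{prop-1} gives $x\leadsto y$. Symmetrically, $|\mathcal C_y^{\mathrm{out}}|=\infty$ and $|\mathcal C_x^{\mathrm{in}}|=\infty$ give $y\leadsto x$. Hence $x\rightleftarrows y$, so all of $\mathcal I$ lies in a single strongly connected cluster.

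\emph{Step 2: $\mathcal I$ is a full cluster, and every infinite strongly connected cluster equals it.} If $x\in\mathcal I$ and $z\rightleftarrows x$, then $\mathcal C_z^{\mathrm{out}}\supseteq\mathcal C_x^{\mathrm{out}}$ and $\mathcal C_z^{\mathrm{in}}\supseteq\mathcal C_x^{\mathrm{in}}$. Both are therefore infinite, so $z\in\mathcal I$, and $\mathcal I$ is closed under $\rightleftarrows$. Conversely, if $\mathcal C_z^{\mathrm{sc}}$ is infinite, then $\mathcal C_z^{\mathrm{sc}}\subseteq\mathcal C_z^{\mathrm{out}}\cap\mathcal C_z^{\mathrm{in}}$, so $z\in\mathcal I$. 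Thus any infinite strongly connected cluster is contained in $\mathcal I$. Since $\mathcal I$ is contained in a single cluster, it is the whole cluster whenever it is nonempty, and it is infinite, because it contains $\mathcal C_z^{\mathrm{sc}}$ for a vertex $z$ with $|\mathcal C_z^{\mathrm{sc}}|=\infty$. This gives uniqueness and the identification with $\mathcal I$, provided an infinite strongly connected cluster exists.

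\emph{Step 3: existence, which I expect to be the main point.} I need $\mathcal I$ to be almost surely infinite. The first route is the proof of Theorem~\ref{main_thm: equality}. Equations~\eqref{eq:strong_lb1}--\eqref{eq:strong_lb2} use only FKG, Lemma~\ref{lemma:directed}, and uniqueness of the undirected infinite cluster. They give $\Pb_p(|\mathcal C_x^{\mathrm{sc}}|=\infty)\ge\theta(p)^4>0$, where $\theta(p)=\Pb_p(x\leftrightarrow\infty)$, which is positive since $p>p_u\ge p_c$. So an infinite strongly connected cluster exists with positive probability.

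To upgrade this to probability one, I would apply ergodicity of the i.i.d.\ directed-edge measure under $\mathrm{Aut}(G)$, which is transitive on $V$ and hence acts with infinite orbits. The event that an infinite strongly connected cluster exists is automorphism-invariant, so it has probability $0$ or $1$. If one prefers to avoid ergodicity, a Kolmogorov $0$--$1$ argument also works: changing finitely many edges cannot create or destroy an infinite strongly connected cluster. Indeed, removing finitely many edges from an infinite strongly connected set leaves some infinite strongly connected piece, by a compactness/finite-modification argument. I would present the ergodicity route, as it is cleaner.

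Combining Steps 1--3: almost surely $\mathcal I$ is nonempty, and it is the unique infinite strongly connected cluster.
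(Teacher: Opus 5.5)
Your main argument is correct and follows the paper's skeleton. Proposition~\ref{prop-1} makes $\mathcal I$ strongly connected, every infinite strongly connected cluster is contained in $\mathcal I$, and ergodicity upgrades positive probability to probability one.

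You differ from the paper only in the existence step.
\begin{itemize}
\item The paper applies FKG to the increasing events $\{|\mathcal C_x^{\mathrm{out}}|=\infty\}$ and $\{|\mathcal C_x^{\mathrm{in}}|=\infty\}$ to get $\Pb_p(x\in\mathcal I)\ge\theta(p)^2$, and then applies ergodicity to $\{\mathcal I\neq\varnothing\}$. It also needs a separate deterministic fact: a nonempty $\mathcal I$ is infinite, because every vertex on an infinite open directed path from a point of $\mathcal I$ is again in $\mathcal I$.
\item You instead import $\Pb_p(x\rightleftarrows y)\ge\theta(p)^4$ from the proof of Theorem~\ref{main_thm: equality}. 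So an infinite strongly connected cluster exists with positive probability, and $\mathcal I$ is infinite simply because it contains that cluster.
\end{itemize}
Your route skips the path argument, at the price of a second appeal to undirected uniqueness and a weaker constant. You also make explicit the closure of $\mathcal I$ under $\rightleftarrows$, which the paper leaves implicit. Either route works.

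Drop the Kolmogorov $0$--$1$ aside, though, because its justification is false. Changing finitely many directed edges can destroy, or create, an infinite strongly connected cluster. For example, in $\mathbb Z^2$ open only the edges $(n,1)\to(n+1,1)$, $(n+1,0)\to(n,0)$ and $(n,1)\to(n,0)$ for $n\ge0$, together with $(0,0)\to(0,1)$. Then $\{(n,i):n\ge0,\ i\in\{0,1\}\}$ is strongly connected. Closing the single edge $(0,0)\to(0,1)$ makes every strongly connected cluster a singleton. This instability under finite modifications is closely related to the failure of insertion tolerance noted in the paper's remark after Theorem~\ref{main_thm: structure}. So the ergodicity route you chose is the one to present.
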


\begin{proof}
Let $p >p_u$.
We first prove that almost surely $\mathcal I$ is the unique infinite strongly connected cluster.
We claim that for every $p \in (p_u,1]$,
\begin{equation}
 \Pb_p
 \left( \left\{ 
 \begin{array}{c}
 \text{either $\mathcal I$ is empty or $\mathcal I$ is the unique} \\
 \text{infinite strongly connected cluster}
 \end{array}
 \right\} \right) = 1.
\end{equation}
Indeed, observe that $\mathcal I^c$ has empty intersection with every infinite strongly connected cluster by definition. Next, if $\mathcal I$ is non-empty, i.e., there exists $x$ such that $|\mathcal C_x^{\mathrm{in}}| = |\mathcal C_x^{\mathrm{out}}| = \infty$,
then it follows from the local finiteness of $G$ that there exist infinitely many vertices $y \in \mathcal C_x^{\mathrm{out}} \cup \mathcal C_x^{\mathrm{in}}$ such that $|\mathcal C_y^{\mathrm{in}}| = |\mathcal C_y^{\mathrm{out}}| = \infty$.
(Any $y$ on an infinite directed path starting from $x$ has this property.)
Thus, $\mathcal I$ is infinite when it is non-empty. 
Moreover, Proposition~\ref{prop-1} implies that any two vertices in $\mathcal I$ can be connected in both directions almost surely, so $\mathcal I$ is a strongly connected cluster.
Combining these observations proves the claim.

By the claim, it suffices to show that almost surely $\mathcal I$ is non-empty. Note that the event $\mathcal I \ne \varnothing$ is invariant under the automorphism group of $G$. Therefore, by ergodicity, it suffices to show $\Pb_p(\mathcal I \neq \varnothing) > 0$. This is true since when $p > p_u \ge p_c$, we have 
\begin{equation}
 \Pb_p\left(|\mathcal{C}_x^{\mathrm{out}}| = \infty, |\mathcal{C}_x^{\mathrm{in}}| = \infty\right) \ge \Pb_p\left(|\mathcal{C}^{\mathrm{un}}_x| = \infty\right)^2 > 0,
\end{equation}
by the FKG inequality and Lemma~\ref{lemma:directed}.
\end{proof}

\subsection{Structure}
\label{subsec:structure}

We now prove the second part of Theorem~\ref{main_thm: structure}.
The idea is to approximate the outward cluster of $\mathcal I$, inside any fixed finite observation set, by the outward cluster of a large deterministic ball placed far away. With high probability, the ball intersects $\mathcal I$; the following lemma shows that the extra vertices reached from the ball are confined to a larger surrounding ball. Placing that larger ball outside the observation set makes the two outward clusters agree there with high probability. The advantage is that exploration from a deterministic ball allows us to compare the directed and undirected models.

We will need the following lemma.
For $A \subseteq V$, we write
\begin{equation}
 \mathcal{C}_A^{\mathrm{out}}
 := \bigcup_{y\in A} \mathcal{C}_y^{\mathrm{out}}.
\end{equation}
We define $\mathcal{C}_A^{\mathrm{in}}$ and $\mathcal{C}_A$ similarly.
Let $B(x,r)$ denote the set of vertices at graph distance at most $r$ from $x$.

\begin{lemma}\label{lemma: large_box_intersection}
Let $p > p_u$.
Then there exists a sequence of positive integers $(K(m))_{m\ge 1}$ with $K(m) \ge m$ for which
\begin{equation*}
\lim_{m\to \infty} \sup_{x\in V}
 \Pb_p
 \left(
 \mathcal{C}_{B(x,m)}^{\mathrm{out}}\setminus B(x,K(m))
 \neq
 \mathcal{C}_{\mathcal I}^{\mathrm{out}}\setminus B(x,K(m))
 \right)
 = 0 .
 \end{equation*}
\end{lemma}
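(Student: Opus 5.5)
By transitivity, the probability in question does not depend on $x$, since the whole event is equivariant under automorphisms and $\mathcal I$ is defined invariantly. It therefore suffices to fix $x=0$ and write $B_m=B(0,m)$. The plan has two steps. First, with high probability $B_m$ meets $\mathcal I$; on that event $\mathcal C^{\mathrm{out}}_{\mathcal I}\subseteq \mathcal C^{\mathrm{out}}_{B_m}$ up to finitely many vertices. Second, every vertex of $B_m$ that reaches infinitely many vertices has outward cluster agreeing with $\mathcal C^{\mathrm{out}}_{\mathcal I}$ outside a finite set. We then choose $K(m)$ so large that all of these finite discrepancy sets lie inside $B(0,K(m))$ with high probability.

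\emph{Step 1 (hitting $\mathcal I$).} By Lemma~\ref{lemma:I}, $\mathcal I$ is almost surely nonempty, so $\Pb_p(\mathcal I\cap B_m=\varnothing)\to0$ as $m\to\infty$. Fix $m$ and work on $\{\mathcal I\cap B_m\neq\varnothing\}$. Pick $z\in\mathcal I\cap B_m$. Since $\mathcal I$ is strongly connected, every $w\in\mathcal I$ satisfies $z\leadsto w$, and hence $\mathcal C^{\mathrm{out}}_{\mathcal I}=\mathcal C^{\mathrm{out}}_z\subseteq \mathcal C^{\mathrm{out}}_{B_m}$. So only the reverse inclusion outside a large ball needs to be controlled.

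\emph{Step 2 (controlling the extra vertices).} Take any $y\in B_m$. If $|\mathcal C^{\mathrm{out}}_y|<\infty$, then $\mathcal C^{\mathrm{out}}_y$ is a finite set. If $|\mathcal C^{\mathrm{out}}_y|=\infty$, we apply Proposition~\ref{prop-2} to the pair $y,z$; this requires $y\neq z$, and the case $y=z$ is trivial. It shows that $\mathcal C^{\mathrm{out}}_y\setminus\mathcal C^{\mathrm{out}}_z$ is almost surely finite. Because $B_m$ is finite and countably many null events can be discarded, the random set
\[
D_m:=\bigcup_{y\in B_m}\bigl(\mathcal C^{\mathrm{out}}_y\setminus \mathcal C^{\mathrm{out}}_{\mathcal I}\bigr)
\]
is almost surely finite on $\{\mathcal I\cap B_m\neq\varnothing\}$. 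On that event we also have $\mathcal C^{\mathrm{out}}_{B_m}=\mathcal C^{\mathrm{out}}_{\mathcal I}\cup D_m$. Hence
\[
\mathcal C^{\mathrm{out}}_{B_m}\setminus B(0,K)=\mathcal C^{\mathrm{out}}_{\mathcal I}\setminus B(0,K)
\quad\text{whenever } D_m\subseteq B(0,K).
\]

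\emph{Step 3 (choice of $K(m)$).} Since $D_m$ is almost surely finite on $\{\mathcal I\cap B_m\neq\varnothing\}$, continuity of measure gives
\[
\Pb_p\bigl(\mathcal I\cap B_m\neq\varnothing,\ D_m\not\subseteq B(0,K)\bigr)\to0 \quad\text{as } K\to\infty.
\]
We choose $K(m)\ge m$ so that this probability is at most $1/m$. The probability in the lemma is then bounded by $\Pb_p(\mathcal I\cap B_m=\varnothing)+1/m\to0$.

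The main obstacle is Step 2, namely ensuring that the finiteness given by Proposition~\ref{prop-2} applies simultaneously to all $y\in B_m$, including those with finite outward cluster. This is handled by the case split above together with the finiteness of $B_m$. The rest is soft measure theory, and no quantitative rate is needed, since $K(m)$ is allowed to grow arbitrarily fast.
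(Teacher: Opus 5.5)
Your proof is correct and follows essentially the same route as the paper. You reduce to a fixed $x$ by transitivity, use that $B(x,m)$ meets $\mathcal I$ with high probability so that $\mathcal C^{\mathrm{out}}_{\mathcal I}=\mathcal C^{\mathrm{out}}_z\subseteq\mathcal C^{\mathrm{out}}_{B(x,m)}$, apply Proposition~\ref{prop-2} (plus the trivial finite-cluster case) to the finitely many $y\in B(x,m)$ to get an almost surely finite discrepancy set, and then choose $K(m)$ by continuity of measure. The only cosmetic difference is that the paper collects the null events into a single almost-sure event $\mathcal E$ and argues with an explicit $\epsilon$ and $m_0$, whereas you discard the null events directly.
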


\begin{remark}
By using recent results on the supercritical sharpness of percolation on vertex-transitive graphs~\cite[Theorem~1]{DiskinEasoRadhakrishnanSudakovTassion26}, we can prove a quantitative version of Lemma~\ref{lemma: large_box_intersection} with $K(m) = K_{G,p} m$ where $K_{G,p} \ge 2$.
This stronger version is not needed in the proof of our main results.
\end{remark}

\begin{proof}[Proof of Lemma~\ref{lemma: large_box_intersection}]
Observe that, by transitivity, the invariance of $\mathbb P_p$ under any graph automorphism $\phi$,
and the identity $\phi(\mathcal I(\omega))=\mathcal I(\phi(\omega))$ for any configuration $\omega$,
the probability in the statement of the lemma is the same for every $x \in V$. Therefore, we can ignore the supremum and treat $x$ as an arbitrary fixed site.

Fix $p>p_u$ and let $\epsilon>0$.
We will show that there exists a sequence $K(m)$ independent of $\epsilon$ and a number $m_0$ for which
 \begin{equation} \label{eq:limitlessepsilon}
 \Pb_p
 \left(
 \mathcal{C}_{B(x,m)}^{\mathrm{out}}\setminus B(x,K(m))
 \neq
 \mathcal{C}_{\mathcal I}^{\mathrm{out}}\setminus B(x,K(m))
 \right)
 \le \epsilon
 \qquad (m\ge m_0).
 \end{equation}
 Throughout the proof, we work on the event 
 \begin{align}
 \mathcal{E} := \, &\bigl\{ 
 \mathcal I \text{ is the unique infinite strongly connected cluster}
 \bigr\} \, \bigcap \nonumber \\
 &\hspace{2em}\bigcap_{v \in V} \Bigl\{ |\mathcal{C}_v^{\mathrm{out}}| = \infty \implies |\mathcal{C}_y^{\mathrm{out}}\setminus \mathcal{C}_v^{\mathrm{out}}| < \infty \text{ for all } y \in V \Bigr\}.
 \end{align}
By Lemma~\ref{lemma:I} and Proposition~\ref{prop-2}, this event has probability $1$.
Since $\mathcal I$ is nonempty and since $G$ is connected, 
there exists $m_0$ such that
 \begin{equation}
 \Pb_p\left(\mathcal I \cap B(x,m) = \varnothing\right) 
 \le \epsilon/2 
 \end{equation}
 for all $m\ge m_0$.
On the event $\{\mathcal I \cap B(x,m) \neq \varnothing\}$, we take $v \in \mathcal I \cap B(x,m)$, so that
 $  \mathcal{C}_{\mathcal I}^{\mathrm{out}} = \mathcal{C}_v^{\mathrm{out}} \subseteq \mathcal{C}_{B(x,m)}^{\mathrm{out}} $
and
 \begin{equation}
 \mathcal{C}^{\mathrm{out}}_{B(x,m)} \setminus \mathcal{C}^{\mathrm{out}}_{\mathcal I} = \bigcup_{y \in B(x, m)} \left( \mathcal{C}_y^{\mathrm{out}} \setminus \mathcal{C}_v^{\mathrm{out}} \right).
 \end{equation}
Under $\mathcal{E}$, since $\mathcal{C}^{\mathrm{out}}_v$ is infinite, the set $\mathcal{C}_y^{\mathrm{out}} \setminus \mathcal{C}^{\mathrm{out}}_v$ is necessarily finite for every $y$. 
Taking the finite union then shows that $\mathcal{C}^{\mathrm{out}}_{B(x, m)} \setminus \mathcal{C}_{\mathcal I}^{\mathrm{out}}$ is finite.
Therefore,
 \begin{equation}
 \Pb_p \left( 
 \mathcal I \cap B(x,m) \neq \varnothing, \, \mathcal{C}^{\mathrm{out}}_{B(x, m)} \setminus B(x, k) \neq \mathcal{C}^{\mathrm{out}}_{\mathcal I} \setminus B(x, k) \right) \rightarrow 0 
 \text{ \ as \ } k \to \infty.
 \end{equation}
Note that the above probability does not depend on $\epsilon$ or on the choice of a site $v$; it depends only on $m$ and $k$.
For each $m$, we choose $K=K(m)$ large enough that the probability above is at most $1/m$ for every $k \geq K$.

Finally, we enlarge the previous $m_0$ so that $1/m_0 < \epsilon/2$. A union bound over the two cases then gives the desired \eqref{eq:limitlessepsilon}.
\end{proof}

\begin{proof}
[Proof of Theorem~\ref{main_thm: structure}]
Our goal is to prove
\begin{align} \label{eq:sec2_goal}
 \text{ (i) } \mathcal C_{\mathcal I}^{\mathrm{out}} 
 \overset{d}{=} \mathcal C_\infty^{\mathrm{un}} ; 
 \quad 
 \text{ (ii) } \mathcal C_x^{\mathrm{out}} \setminus \mathcal C_{\mathcal I}^{\mathrm{out}} 
 \overset{d}{=} 
 \widetilde{\mathcal C}_x ^{\mathrm{un}}
 \text{ for every } x \in V.
\end{align}
The analogous distributional equalities for $\mathcal C_{\mathcal I}^{\mathrm{in}}$ and $\mathcal C_x^{\mathrm{in}} \setminus \mathcal C_{\mathcal I}^{\mathrm{in}}$ follow by reversing the orientation of all directed edges.

We first prove item (i).
It suffices to show that, 
for every finite $W \subseteq V$ and every $\epsilon > 0$,
we have
\begin{equation} \label{eq:dist_goal1}
 d_{\mathrm{TV}}
 \left(
 \mathcal L(\mathcal{C}_{\mathcal I}^{\mathrm{out}} \cap W),
 \mathcal L(\mathcal{C}_\infty^{\mathrm{un}} \cap W)
 \right)
 \le 2\epsilon ,
\end{equation}
where $d_{\mathrm{TV}}$ denotes the total variation distance between two probability measures.
Let $(K(m))_{m\ge1}$ be given by Lemma~\ref{lemma: large_box_intersection}.
For every $m$, choose $x_m \in V$ such that $d(x_m,W)\ge K(m){+}m.$
In particular, 
we have $W \subseteq B(x_m, K(m))^c$.
Using Lemma~\ref{lemma: large_box_intersection}, by choosing $m$ large enough depending on $\epsilon$, we have
\begin{align}
&\Pb_p
 \left( 
 \mathcal{C}_{B(x_m,m)}^{\mathrm{out}} \cap W = \mathcal{C}_{\mathcal I}^{\mathrm{out}} \cap W
 \right)
 \nonumber \\
&\ge
 \Pb_p
 \left(
 \mathcal{C}_{B(x_m,m)}^{\mathrm{out}} \setminus B(x_m,K(m))
 =
 \mathcal{C}_{\mathcal I}^{\mathrm{out}} \setminus B(x_m,K(m))
 \right)
 \ge 1-\epsilon.
\end{align}
Consequently,
\begin{equation} \label{eq:directed-TV}
 d_{\mathrm{TV}}
 \left(
 \mathcal L(\mathcal{C}_{B(x_m,m)}^{\mathrm{out}} \cap W),
 \mathcal L(\mathcal{C}_{\mathcal I}^{\mathrm{out}} \cap W)
 \right)
 \le \epsilon .
\end{equation}
On the other hand, we claim that, by possibly further increasing $m$, we can have
\begin{equation}
\label{eq:undirected-convergence}
 d_{\mathrm{TV}}
 \left(
 \mathcal L(\mathcal{C}^{\mathrm{un}}_{B(x_m,m)} \cap W),
 \mathcal L(\mathcal{C}^{\mathrm{un}}_\infty \cap W)
 \right) \le \epsilon
\end{equation}
in the undirected model.
Indeed, 
if $\mathcal{C}^{\mathrm{un}}_{B(x_m,m)} \cap W$ is different from $\mathcal{C}^{\mathrm{un}}_\infty \cap W$
and if $\mathcal{C}^{\mathrm{un}}_\infty$ intersects $B(x_m,m)$,
then there must be a finite cluster that connects $B(x_m, m)$ to $W$.
Thus, we have
\begin{multline}
\label{eq:probclusterdiff}
 \Pb_p\left(
 \mathcal{C}^{\mathrm{un}}_{B(x_m,m)} \cap W \neq \mathcal{C}^{\mathrm{un}}_\infty \cap W
 \right) \le \;
 \Pb_p\left(
 \mathcal{C}^{\mathrm{un}}_\infty \cap B(x_m,m) = \varnothing
 \right) \\
 \hspace{20mm} + \sum_{w\in W}
 \Pb_p\bigl(
 |\mathcal C^{\mathrm{un}}_w|<\infty, \, \mathcal C^{\mathrm{un}}_w \cap B(x_m,m) \neq \varnothing
 \bigr).
\end{multline}
Since $p>p_u\ge p_c$, the first term on the right-hand side goes to zero as $m\to\infty$.
Moreover, since $W$ is finite and since $d\bigl(W,B(x_m,m)\bigr) \ge K(m) \ge m \to \infty$, the second term goes to zero as well,
proving~\eqref{eq:undirected-convergence}.

Finally, note that $\mathcal{C}_{B(x_m,m)}^{\mathrm{out}} \overset{d}{=} \mathcal{C}^{\mathrm{un}}_{B(x_m,m)}$, which can be shown using the exploration used to construct $A$ in~\eqref{eq:exploration}, with the initial set replaced by $A_0=B(x_m,m)$.
Combining this with~\eqref{eq:directed-TV} and~\eqref{eq:undirected-convergence},
we obtain the desired~\eqref{eq:dist_goal1}.

To prove identity (ii) in~\eqref{eq:sec2_goal},
we first show that for every finite set $D\subseteq V$ and every $x\in V$,
\begin{equation} \label{eq:exploration-identity}
 \mathcal{C}_x^{\mathrm{out}}\setminus \mathcal{C}_D^{\mathrm{out}}
 \overset{d}{=} \widetilde{\mathcal C}^{\mathrm{un}}_{x, D} := \mathcal C^{\mathrm{un}}_{x} \mathds{1}_{\{x \not \leftrightarrow D\}}.
\end{equation}
We explore the vertices of $\mathcal{C}_D^{\mathrm{out}}$ 
in the same way as we explored $\mathcal{C}_x^{\mathrm{out}}$ in the proof of Proposition~\ref{prop-1}.
Fix an enumeration of the undirected edges of $G$.
Initially, set $A_0=D$ and declare every undirected edge unexplored.
Assume that, for some $n\ge 0$, we have constructed $A_n$ and have declared exactly $n$ undirected edges explored.
In the $(n{+}1)$-th step, we choose the first unexplored undirected edge $e=\{u,v\}$ in the fixed enumeration such that $u\in A_n$ and $v\notin A_n$ and mark $e$ as explored.
If $\omega(u,v)=1$, let $A_{n+1}:=A_n\cup\{v\}$; otherwise, let $A_{n+1}:=A_n$.
The process terminates when there are no such unexplored edges.
Let $A:=\cup_{n\ge0}A_n$ be the set of discovered vertices, where, if the procedure terminates at time $N$, we set $A_n:=A_N$ for all $n>N$.

Then, we explore the vertices of $\mathcal{C}_x^{\mathrm{out}} \setminus \mathcal{C}_D^{\mathrm{out}}$.
If $x \in A$, set $B := \varnothing$ and terminate the exploration.
If $x \notin A$, we follow the same procedure as above, except that we set $B_0:=\{x\}$ and regard
every undirected edge explored during the previous exploration as already explored.
This process gives $B_0, B_1, \ldots$. Let $B:=\cup_{n\ge0} B_n$ be the set of discovered vertices.

We make the following two observations: 
(a) each undirected edge is explored at most once and 
(b) after the first exploration, every undirected edge $e = \{u, v\}$ such that $u \in A$ and $v \notin A$ is explored with $(u, v)$ being closed. 
Combining these two observations, it is straightforward to verify that 
$\mathcal{C}_x^{\mathrm{out}} \setminus \mathcal{C}_D^{\mathrm{out}} = B$ and 
$\widetilde{\mathcal C}^{\mathrm{un}}_{x, D}$ has the same law as $B$, which proves~\eqref{eq:exploration-identity}.

We now continue to prove item (ii). It suffices to show that, for every finite $W\subseteq V$ and every $\epsilon> 0$, we have
\begin{equation} \label{eq:dist_goal2}
 d_{\mathrm{TV}}\left(
 \mathcal L\left(
 \bigl(\mathcal{C}_x^{\mathrm{out}}\setminus \mathcal{C}_{\mathcal I}^{\mathrm{out}}\bigr)
 \cap W
 \right),
 \mathcal L(\widetilde{\mathcal C}_x^{\mathrm{un}}\cap W)
 \right)
 \le 2\epsilon.
\end{equation}
Let $m$ be a positive integer and $K(m)$ be given by Lemma~\ref{lemma: large_box_intersection}.
For each $m$, choose $z_m \in V$ such that $d(z_m,W\cup\{x\}) > K(m)+m$. 
Then we have
$W \cup \{x\} \subseteq B(z_m, K(m))^c$.
By Lemma~\ref{lemma: large_box_intersection}, for all sufficiently large $m$,
\begin{align}
& \Pb_p\left( \mathcal{C}^{\mathrm{out}}_{B(z_m,m)} \cap W 
	= \mathcal{C}_{\mathcal I}^{\mathrm{out}} \cap W \right) 
\nonumber \\
&\ge \Pb_p\left( \mathcal{C}^{\mathrm{out}}_{B(z_m,m)} \setminus B(z_m,K(m)) = \mathcal{C}_{\mathcal I}^{\mathrm{out}} \setminus B(z_m,K(m)) \right) 
 \ge 1-\epsilon.
\end{align}
By subtracting the same set from $\mathcal C_x ^{\mathrm{out}} \cap W$, we get
\begin{equation} \label{eq:first-TV-bound}
 \Pb_p\left( 
 \bigl(\mathcal{C}_x^{\mathrm{out}} \setminus \mathcal{C}_{B(z_m,m)}^{\mathrm{out}}\bigr) \cap W 
 \neq \bigl(\mathcal{C}_x^{\mathrm{out}} \setminus \mathcal{C}_{\mathcal I}^{\mathrm{out}}\bigr) \cap W 
 \right) \le \epsilon.
\end{equation}
By~\eqref{eq:exploration-identity} with $D = B(z_m,m)$, we also have
\begin{equation} \label{eq:TV-identity}
 \bigl(\mathcal{C}_x^{\mathrm{out}} \setminus \mathcal{C}_{B(z_m,m)}^{\mathrm{out}}\bigr) \cap W \overset{d}{=} \widetilde{\mathcal C}^{\mathrm{un}}_{x, B(z_m,m)} \cap W
\end{equation}
for all $W$ and $m$.

On the other hand, we claim that, by possibly further increasing $m$, we have
\begin{equation} \label{eq:second-TV-bound}
 \Pb_p\left( \widetilde{\mathcal C}^{\mathrm{un}}_{x, B(z_m,m)} \cap W \neq \widetilde{\mathcal C}^{\mathrm{un}}_x \cap W \right) \le \epsilon
\end{equation}
in the undirected model.
Recall the definition of $\widetilde{\mathcal C}^{\mathrm{un}}_{x, B(z_m,m)}$ from~\eqref{eq:exploration-identity} and the definition of $\widetilde{\mathcal C}^{\mathrm{un}}_x$ from~\eqref{eq: cluster_of_x_truncated}.
We can split the event in~\eqref{eq:second-TV-bound} into two cases: (i) if $x \leftrightarrow \infty$, then $\widetilde{\mathcal C}^{\mathrm{un}}_x = \varnothing$ by definition, and thus we must have $x \not\leftrightarrow B(z_m, m)$; (ii) if $x \leftrightarrow B(z_m, m)$, then $\widetilde{\mathcal C}^{\mathrm{un}}_{x, B(z_m, m)} = \varnothing$ by definition, and thus we must have $x \not\leftrightarrow \infty$.
Therefore, we have
\begin{align}
 \Pb_p\left( \widetilde{\mathcal C}^{\mathrm{un}}_{x, B(z_m,m)} \cap W \neq \widetilde{\mathcal C}^{\mathrm{un}}_x \cap W \right) 
 &\le 
 \Pb_p \left( 
 \begin{array}{c}
 x \leftrightarrow \infty, \\
 x \not\leftrightarrow B(z_m,m)
 \end{array}
 \right) + 
 \Pb_p \left( 
 \begin{array}{c}
 x \not\leftrightarrow \infty, \\
 x \leftrightarrow B(z_m,m) 
 \end{array} 
 \right) \nonumber \\[0.2em]
 &\le \Pb_p \Bigl( B(z_m,m) \not\leftrightarrow \infty \Bigr) \, + \, \Pb_p \Bigl( m \le \mathrm{diam}(\mathcal{C}_x^{\mathrm{un}}) < \infty \Bigr),
\end{align}
where we used $p > p_u$ for the first term and
$d(x,z_m) > K(m) + m \ge 2m$ for the second term in the last line.
This upper bound goes to $0$ as $m\to \infty$, since $p > p_u \ge p_c$.

Finally, we combine~\eqref{eq:first-TV-bound},
\eqref{eq:TV-identity}, and
\eqref{eq:second-TV-bound} with the triangle inequality to get
the desired~\eqref{eq:dist_goal2}.
This completes the proof of item (ii) and the proof of Theorem~\ref{main_thm: structure}.
\end{proof}

\section{Proof of Theorem~\ref{main_thm: susceptibility}} \label{sec:proof-of-susceptibility}

In this section, we prove Theorem~\ref{main_thm: susceptibility} using a diagrammatic method. 
Our main tool is the BK inequality.
Let $(A_i)_{i = 1}^{n}$ be a finite sequence of events in $\{0,1\}^{\vec E}$.
The event of \textbf{disjoint occurrence} of $(A_i)_{i = 1}^{n}$ is
\begin{equation}
 A_1 \circ \cdots \circ A_n := \left\{ \omega \in \{0,1\}^{\vec E} : 
 \begin{array}{c}
 \exists \text{ disjoint } \vec E_1, \ldots, \vec E_n \subseteq \vec E \\
 \text{ such that } [\omega]_{\vec E_i} \subseteq A_i \text{ for each } i
 \end{array} 
 \right\},
\end{equation}
where $[\omega]_{\vec E_i}$ is the event that consists of all $\omega' \in \{0,1\}^{\vec E}$ such that $\omega'$ agrees with $\omega$ on the directed edges in $\vec E_i$. 
The BK inequality says that
$\Pb_p(A_1 \circ \cdots \circ A_n) \le \Pb_p(A_1) \cdots \Pb_p(A_n)$.

The following lemma will be useful in the computation of $\E_p \vert \mathcal{C}_0^{\mathrm{sc}} \vert $.

\begin{lemma} \label{lem:3pt_decomp}
For any vertices $x,y$, we have
\begin{equation}
\begin{aligned} 
 \{ 0 \leadsto x \leadsto y \} \,
 \subseteq \, &\bigl\{ ( 0 \leadsto x ) \circ ( x \leadsto y ) \bigr\} \, \bigcup \\
	& \; \; \bigcup_{ u, v } 
	\bigl\{ ( 0\leadsto u ) \circ ( u \leadsto v ) \circ ( v \leadsto y )
		\circ ( v \leadsto x \leadsto u ) \bigr\},
\end{aligned}
\end{equation}
where the union is over all pairs of vertices $u,v$ satisfying the constraints
$u \ne v$, $u \ne y$, and $v \ne 0$.
\end{lemma}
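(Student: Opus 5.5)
We take a self-avoiding open directed path $\gamma$ from $x$ to $y$ and look at where it meets a fixed open directed path from $0$ to $x$. On the event $\{0\leadsto x\leadsto y\}$, fix a self-avoiding open directed path $\gamma$ from $x$ to $y$. Let $\mathcal P$ be the set of vertices reachable from $0$ by open directed paths that use no edge of $\gamma$; equivalently, consider $\omega$ with the edges of $\gamma$ removed. There are two cases.

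\emph{Case 1: $0\leadsto x$ holds in $\omega$ minus the edges of $\gamma$.} Then an open path from $0$ to $x$ and the path $\gamma$ use disjoint edge sets. This gives $(0\leadsto x)\circ(x\leadsto y)$. Note that disjoint occurrence only requires disjoint directed edge sets, so shared vertices are allowed.

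\emph{Case 2: otherwise.} Fix an open self-avoiding path $\eta$ from $0$ to $x$. We will decompose $\eta$ and $\gamma$ as follows:
\begin{itemize}
\item Let $u$ be the last vertex of $\gamma$ that is reachable from $0$ using only edges off $\gamma$.
\item Such a vertex exists in a suitable sense: consider the set $R$ of vertices reachable from $0$ in $\omega\setminus\gamma$. Since $0\leadsto y$ holds (via $\eta$ then $\gamma$), some vertex of $\gamma$ lies in $R\cup\{$vertices reached through $\gamma$-edges$\}$. I would rather argue via the last exit. Take $u$ to be the vertex of $\gamma$ that is furthest along $\gamma$ among vertices of $R$. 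If $R\cap\gamma=\emptyset$, then $0$'s path must use a $\gamma$-edge, so first entry gives a vertex of $R$ on $\gamma$.
\item Let $v$ be a vertex after $u$ on $\gamma$ such that the segment $\gamma[u,v]$ gives $u\leadsto v$, and $\gamma[v,y]$ gives $v\leadsto y$.
\item The remaining requirement is a path $v\leadsto x\leadsto u$ disjoint from these three pieces. The natural candidate is $\gamma[x,u]$ reversed in role? That does not work, since $\gamma$ goes from $x$ to $u$ and not from $u$ to $x$. So $x\leadsto u$ is supplied by $\gamma[x,u]$, and we still need $v\leadsto x$ off $\gamma$ and off the $0\leadsto u$ path.
\end{itemize}
This suggests choosing $v$ as the point where $\eta$ (the path to $x$) last leaves $\gamma$. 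Here is the concrete choice I commit to. Let $v$ be the last vertex of $\eta$ lying on $\gamma$ (it exists in Case 2 unless $\eta$ avoids $\gamma$'s edges, which is Case 1). Then the tail $\eta[v,x]$ uses no $\gamma$-edges. Then let $u$ be the last vertex of $\gamma$ before $v$ that is reachable from $0$ without $\gamma$-edges and without the edges of $\eta[v,x]$. Such a $u$ lies before $v$ because $v$ itself is not reachable off $\gamma$ in Case 2 (otherwise append $\eta[v,x]$ to get Case 1).

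The four witnesses are then:
\begin{itemize}
\item $0\leadsto u$ off $\gamma\cup\eta[v,x]$;
\item $\gamma[u,v]$, which gives $u\leadsto v$;
\item $\gamma[v,y]$, which gives $v\leadsto y$;
\item $\eta[v,x]$ followed by $\gamma[x,u]$, which gives $v\leadsto x\leadsto u$.
\end{itemize}
These are edge-disjoint because the segments of $\gamma$ are disjoint ($\gamma$ is self-avoiding), and the other pieces avoid $\gamma$'s edges and each other by construction.

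The constraints are checked as follows. $u\ne v$ since $u$ precedes $v$. $u\ne y$ since $u$ precedes $v$ on $\gamma$ and $y$ is the endpoint. $v\ne 0$: if $v=0$, then $v$ is trivially reachable from $0$ off $\gamma$, which would put us in Case 1.

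The main obstacle is the delicate point that the $0\leadsto u$ path must avoid $\eta[v,x]$ while $u$ must still precede $v$. If $0$ can only reach $\gamma$ through edges of $\eta[v,x]$, then $0$ reaches $x$ off $\gamma$, which is Case 1 again. I would verify carefully that the first-entry argument along $\eta[0,v]$ produces such a $u$ on $\gamma[x,v)$: take the first vertex of $\eta$ on $\gamma$ and note that the prefix of $\eta$ before it avoids $\gamma$-edges and, being before $v$ on the self-avoiding $\eta$, avoids $\eta[v,x]$. The remaining subtle issue is whether this first vertex could lie after $v$ on $\gamma$; in that case, the "last reachable before $v$" choice is needed, possibly by re-selecting $v$ as the first such crossing point.
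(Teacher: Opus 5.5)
There is a genuine gap in Case 2: it is the issue you flag at the end and leave unresolved. As written, ``the last vertex of $\eta$ lying on $\gamma$'' is always $x$, because $\eta$ ends at $x$ and $x$ is the initial vertex of $\gamma$. So no vertex of $\gamma$ precedes $v$, and $u$ cannot exist.

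Under the charitable reading, $v$ is the head of the last $\gamma$-edge traversed by $\eta$. The required $u$ can still fail to exist. Take the open edges to be exactly those of $\gamma = x\to a\to b\to c\to d\to y$ together with $(0,c)$, $(d,w)$, $(w,a)$, $(b,w')$, $(w',x)$, so that $\eta = 0\to c\to d\to w\to a\to b\to w'\to x$.
\begin{itemize}
\item Without $\gamma$-edges, $0$ reaches only $\{0,c\}$, so you are in Case 2.
\item Your $v$ is $b$, and neither $x$ nor $a$ is reachable from $0$ off $\gamma$.
\item The only vertices of $\gamma$ that reach $x$ without $\gamma$-edges are $b$ and $x$.
\end{itemize}
Hence no pair $(u,v)$ meeting your requirements exists, even though the target event occurs. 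The cause is your insistence that the $v\leadsto x$ path avoid \emph{every} edge of $\gamma$. This forces $v$ to be a last exit from all of $\gamma$, and that exit can come before every point where $0$ first reaches $\gamma$.

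The missing idea is to choose $u$ first and to relax that requirement.
\begin{itemize}
\item Let $u$ be the tail of the first $\gamma$-edge traversed by $\eta$ (your first-entry point), so $\eta[0,u]$ uses no edge of $\gamma$.
\item Let $v$ be the last vertex of $\eta$ on the subpath $\gamma[u,y]$ only, not on all of $\gamma$.
\end{itemize}
The head of that first shared edge lies on $\gamma[u,y]$ and comes after $u$ on $\eta$. So $v$ lies strictly after $u$ on both paths, which gives $u\ne v$ and $v\ne 0$. Also $u\ne y$ because $u$ is the tail of a $\gamma$-edge.

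By maximality of $v$, every edge of $\eta[v,x]$ has its head off $\gamma[u,y]$. So $\eta[v,x]$ is edge-disjoint from $\gamma[u,v]\cup\gamma[v,y]$. It may use edges of $\gamma[x,u]$: in the example, $u=c$, $v=d$, and $\eta[d,x]$ uses $(a,b)$. This is harmless, because $\eta[v,x]$ and $\gamma[x,u]$ together form a single witness for the one event $\{v\leadsto x\leadsto u\}$ and need not be disjoint from each other. This is exactly the paper's proof.
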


\begin{proof}
We bound the event $\{ 0 \leadsto x \leadsto y \}
\setminus \{ ( 0\leadsto x ) \circ ( x \leadsto y ) \}$.
On this event, there exists a pair of non-trivial self-avoiding open (directed) paths $\gamma_1 : 0\leadsto x$ and $\gamma_2 : x\leadsto y$,
and they must share at least one (directed) edge.
Let $(u,u')$ denote the first such edge along $\gamma_1$.
We write $\gamma_2[u,y]$ for the part of $\gamma_2$ from $u$ to $y$, 
and we find some vertex $v$ on $\gamma_2[u,y]$ such that $\gamma_1[v,x]$ is (edge-)disjoint from $\gamma_2[u,y]$; this can be established by taking the last point along $\gamma_1$ that also belongs to $\gamma_2[u,y]$.
Then, since $\gamma_1[0,u]$ is disjoint from $\gamma_2$ by the choice of the first shared edge, the event
\begin{equation} \label{eq:3pt_decomp_pf}
( 0\leadsto u ) \circ ( u \leadsto v ) \circ ( v \leadsto y )	\circ ( v \leadsto x \leadsto u ) 
\end{equation}
holds; see Figure~\ref{fig:3pt_decomp}. The desired result then follows from a union bound; by our choice, we have $u \ne v$, $u\ne y$, and $v \ne 0$.
\end{proof}

\begin{figure}[h]
\center{
\begin{tikzpicture}[>=stealth, line width=0.75pt]
\node (x) at (0,1.6) {$0$};
\node (u) at (2,1.6) {$u$};
\node (v) at (2,0) {$v$};
\node (y) at (0,0) {$y$};
\node (o) at (5,0.8) {$x$};

\draw[->] (x) -- (u);
\draw[->] (u) -- (v);
\draw[->] (v) -- (y);
\draw[->]
 (o)
 .. controls (4.5,0.4) and (4,0.2)
 .. (3.5, 0.8)
 .. controls (3.2,1.2) and (2.5,1.6)
 .. (u);
\draw[->]
 (v)
 .. controls (2.5,0) and (3.2, 0.4)
 .. (3.5, 0.8)
 .. controls (4,1.4) and (4.5,1.2)
 .. (o);
\end{tikzpicture}

\caption{
Schematic picture for the occurrence of the event~\eqref{eq:3pt_decomp_pf}. 
The paths $0 \leadsto u$ and $v \leadsto x$ come from $\gamma_1$.
The path $x \leadsto u \leadsto v \leadsto y$ comes from $\gamma_2$.
}
\label{fig:3pt_decomp}
}
\end{figure}

Recall 
$\tau_p(x,y) = \Pb_p(x \leftrightarrow y)
= \Pb_p( x \leadsto y)$
denotes the two-point function.
Using Lemma~\ref{lem:3pt_decomp} with $y=0$ and the BK inequality, we have
\begin{equation} \label{eq:sconnect_base}
\Pb_p( 0 \rightleftarrows x)
\le \tau_p(0,x)^2 + \sum_{ \substack{ u,v } }
	\tau_p(0,u) \tau_p (u,v) \tau_p(v,0) \Pb_p(v \leadsto x \leadsto u),
\end{equation}
where the sum is over all pairs of vertices $u,v$ satisfying 
$0 \ne u \ne v \ne 0$.
We can apply Lemma~\ref{lem:3pt_decomp} to the last factor in the sum again. 
For our final bound, we define the functions
\begin{equation}
\begin{aligned}
 U_p\supk1(z; u,v) &:= \tau_p(z,u) \tau_p (u,v) \tau_p(v,z) ,\\
 U_p\supN(z; u',v') &:= \sum_{ \substack{ u, v } }
 U_p\supk{N-1}(z; u, v) \tau_p(v, u') \tau_p (u',v') \tau_p(v',u)
 \qquad (N\ge 2),
\end{aligned}
\end{equation}
where the sum is over all pairs of vertices $u,v$ satisfying $u \ne v$, $u' \ne u$, and $v' \ne v$.
Also, define
\begin{equation}
R_p\supN(x) := \sum_{ u\ne v }
	U_p\supk{N}(0; u,v) \Pb_p(v \leadsto x \leadsto u)
	\qquad(N\ge 1),
\end{equation}
which gives the remainder after $N$ applications of Lemma~\ref{lem:3pt_decomp}.

\begin{lemma} \label{lem:sconnect_N}
Let $x$ be a vertex.
For any $N \ge 0$, we have
\begin{equation} \label{eq:sconnect_N}
\Pb_p(0 \rightleftarrows x)
\le \tau_p(0,x)^2 
	+ \biggl( \sum_{n=1}^N 
		\sum_{ u\ne v }
		U_p\supk n(0; u,v) \tau_p(u,x) \tau_p(v,x) \biggr)
	+ R_p\supk{N+1}(x) .
\end{equation}
\end{lemma}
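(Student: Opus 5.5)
Induction on $N$. For $N=0$ the sum is empty, and the claim reads $\Pb_p(0\rightleftarrows x)\le \tau_p(0,x)^2 + R_p\supk1(x)$. This is exactly \eqref{eq:sconnect_base}, since $R_p\supk1(x)=\sum_{u\ne v}\tau_p(0,u)\tau_p(u,v)\tau_p(v,0)\Pb_p(v\leadsto x\leadsto u)$. One should check that the constraint $0\ne u\ne v\ne 0$ in \eqref{eq:sconnect_base} is compatible with the sum over $u\ne v$ in $R_p\supk1$. Dropping the constraints $u\ne 0$, $v\ne 0$ only adds nonnegative terms, so the bound survives.

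For the inductive step, assume \eqref{eq:sconnect_N} holds for $N-1$. It then suffices to show
\begin{equation*}
R_p\supk{N}(x) \le \sum_{u\ne v} U_p\supk N(0;u,v)\tau_p(u,x)\tau_p(v,x) + R_p\supk{N+1}(x).
\end{equation*}
Fix $u\ne v$ and apply Lemma~\ref{lem:3pt_decomp} to the event $\{v\leadsto x\leadsto u\}$. That lemma is stated with base point $0$, but by translation it holds with base point $v$ and endpoint $u$. This gives
\begin{equation*}
\{v\leadsto x\leadsto u\}\subseteq\{(v\leadsto x)\circ(x\leadsto u)\}\cup\bigcup_{u',v'}\{(v\leadsto u')\circ(u'\leadsto v')\circ(v'\leadsto u)\circ(v'\leadsto x\leadsto u')\},
\end{equation*}
with the union over $u'\ne v'$, $u'\ne u$, $v'\ne v$.

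Next apply a union bound and the BK inequality, using $\Pb_p(a\leadsto b)=\tau_p(a,b)$ from Lemma~\ref{lemma:directed}. This yields
\begin{equation*}
\Pb_p(v\leadsto x\leadsto u)\le \tau_p(v,x)\tau_p(x,u)+\sum_{u',v'}\tau_p(v,u')\tau_p(u',v')\tau_p(v',u)\Pb_p(v'\leadsto x\leadsto u').
\end{equation*}
Use symmetry $\tau_p(x,u)=\tau_p(u,x)$, which holds in the undirected model. Multiply by $U_p\supk N(0;u,v)$ and sum over $u\ne v$. The first term gives the $n=N$ summand. In the second term, exchange the order of summation (Tonelli, all terms nonnegative). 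The inner sum over $u,v$ is then exactly $U_p\supk{N+1}(0;u',v')$ by its definition, with matching constraints $u\ne v$, $u'\ne u$, $v'\ne v$. The outer sum over $u'\ne v'$ produces $R_p\supk{N+1}(x)$.

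The main point needing care is bookkeeping of index constraints. The definition of $U_p\supk{N+1}$ must carry exactly the constraints produced by the lemma, namely $u'\ne u$ (the lemma's ``$u\ne y$'') and $v'\ne v$ (the lemma's ``$v\ne 0$''), and these do match. The argument also relies on Lemma~\ref{lem:3pt_decomp} holding for arbitrary base points and on BK applying to events like $\{v'\leadsto x\leadsto u'\}$, which are increasing. Both are immediate: the lemma's proof never used special properties of $0$, and BK holds for increasing events. No infinite-sum issues arise, since all quantities are nonnegative (possibly infinite) and the inequalities hold in $[0,\infty]$.
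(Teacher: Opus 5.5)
Your proof is correct and follows the paper's argument. The paper also inducts on $N$, takes the base case from \eqref{eq:sconnect_base}, and for the inductive step applies Lemma~\ref{lem:3pt_decomp} (with base point $v$ and endpoint $u$) and BK to the factor $\Pb_p(v\leadsto x\leadsto u)$ in $R_p\supk{N}(x)$, then interchanges sums to recognize $U_p\supk{N+1}$ and $R_p\supk{N+1}$; your explicit checks (matching index constraints, and that dropping $u\ne 0$, $v\ne 0$ in the base case only enlarges the sum) fill in bookkeeping the paper leaves implicit.
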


The first three terms of~\eqref{eq:sconnect_N} are depicted in Figure~\ref{fig:sconnect_expansion}.

\begin{figure}[h]
\centering
\begin{tikzpicture}[>=stealth, line width=0.75pt]

\node (bo) at (0,0) {$0$};
\node (bx) at (1.6,0) {$x$};

\draw[->] (bo)
 .. controls (0.3,0.8) and (1.3,0.8) .. (bx);
\draw[->] (bx)
 .. controls (1.3,-0.8) and (0.3,-0.8) .. (bo);

\begin{scope}[shift={(3.8,0)}]
 \node (co) at (0,0) {$0$};
 \node (cu) at (1.2,1) {$u$};
 \node (cv) at (1.2,-1) {$v$};
 \node (cx) at (2.4,0) {$x$};

 \draw[->] (co) -- (cu);
 \draw[->] (cu) -- (cv);
 \draw[->] (cv) -- (co);
 \draw[->] (cv) -- (cx);
 \draw[->] (cx) -- (cu);
\end{scope}

\begin{scope}[shift={(8.4,0)}]
 \node (o) at (0,0) {$0$};
 \node (up) at (1.6,1) {$u'$};
 \node (vp) at (1.6,-1) {$v'$};
 \node (v) at (3.2,1) {$v$};
 \node (u) at (3.2,-1) {$u$};
 \node (x) at (4.8,0) {$x$};

 \draw[->] (o) -- (up);
 \draw[->] (up) -- (vp);
 \draw[->] (vp) -- (o);
 \draw[->] (vp) -- (u);
 \draw[->] (u) -- (v);
 \draw[->] (v) -- (up);
 \draw[->] (v) -- (x);
 \draw[->] (x) -- (u);
\end{scope}

\end{tikzpicture}
\caption{Schematic picture for the summands in the first three terms in~\eqref{eq:sconnect_N}. Left to right: $\tau_p(0,x)^2$, $U_p\supk{1}(0; u,v) \tau_p(u,x) \tau_p(v,x)$, and $U_p\supk{2}(0; u,v) \tau_p(u,x) \tau_p(v,x)$.
}
\label{fig:sconnect_expansion}
\end{figure}

\begin{proof}
The $N=0$ case holds by~\eqref{eq:sconnect_base} and the definitions of $R_p\supk1$ and $U_p\supk 1$.
We prove the general case by induction on $N$.
Indeed, for any $N\ge 1$, 
we can upper bound the factor $\Pb_p(v \leadsto x \leadsto u)$ in the definition of $R_p\supN(x)$ using Lemma~\ref{lem:3pt_decomp} and the BK inequality. 
This gives
\begin{align}
R_p\supN(x)
&\le \sum_{ u\ne v } U_p\supk{N}(0; u,v) \times \biggl[ \tau_p(v,x) \tau_p(x,u) \nonumber \\
&\qquad\qquad +\sum_{ \substack{ u',v' } }
	 \tau_p(v ,u') \tau_p(u',v') \tau_p(v', u) \Pb_p(v' \leadsto x \leadsto u') \biggr] \nonumber \\
&= \sum_{ u\ne v }	U_p\supk{N}(0; u,v) \tau_p(u,x) \tau_p(v,x)
	+ R_p\supk{N+1}(x) ,
\end{align}
where 
in the second line we summed over all pairs of vertices $u',v'$ satisfying 
$u' \ne v'$, $u' \ne u$, and $v' \ne v$,
and
in the last line we interchanged the sums and used the definitions of $U_p\supk{N+1}$ and $R_p\supk{N+1}$.
Collecting terms then gives the desired result.
\end{proof}

We now take the limit $N\to \infty$.
The convergence requires the open triangle 
\begin{equation}
\triangle_p = \sup_{z \ne 0} \sum_{ x\ne y }
	\tau_p(0,x) \tau_p(x,y) \tau_p(y,z)
\end{equation}
to be strictly less than $1$.

\begin{proposition} \label{prop:sconnect_infty}
Let $G$ be a transitive graph.
Suppose $p > 0$ and $\triangle_p < 1$. 
Then for any $x \in V$, 
\begin{equation} \label{eq:sconnect_infty}
\Pb_p(0 \rightleftarrows x)
\le \tau_p(0,x)^2 
	+ \sum_{n=1}^\infty \sum_{ u\ne v }
		U_p\supk n(0; u,v) \tau_p(u,x) \tau_p(v,x).
\end{equation}
\end{proposition}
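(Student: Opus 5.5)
The plan is to take $N\to\infty$ in Lemma~\ref{lem:sconnect_N}. It suffices to show that the remainder $R_p\supk{N+1}(x)$ tends to $0$. Once that holds, the partial sums on the right-hand side of~\eqref{eq:sconnect_N} are nondecreasing in $N$ with nonnegative terms, so they converge to the series in~\eqref{eq:sconnect_infty}. The bound follows by letting $N\to\infty$ on both sides.

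To control the remainder, bound $\Pb_p(v\leadsto x\leadsto u)\le 1$, which gives $R_p\supN(x)\le \sum_{u\ne v}U_p\supN(0;u,v)$. This quantity is independent of $x$. I will then show by induction that
\begin{equation*}
\sup_{u}\sum_{v\ne u} U_p\supN(0;u,v) \le C\,\triangle_p^{N-1}
\end{equation*}
for a suitable quantity $C$, or more robustly that the total sum $S_N:=\sum_{u\ne v}U_p\supN(0;u,v)$ satisfies $S_{N}\le \triangle_p\, S_{N-1}$.

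The recursion is obtained from the definition. We have
\begin{equation*}
S_N=\sum_{u'\ne v'}\sum_{u,v}U_p\supk{N-1}(0;u,v)\,\tau_p(v,u')\tau_p(u',v')\tau_p(v',u).
\end{equation*}
Interchange the sums and fix $(u,v)$ with $u\ne v$. The inner sum is $\sum_{u'\ne v'}\tau_p(v,u')\tau_p(u',v')\tau_p(v',u)$ with $u\ne v$. By transitivity, translate $v$ to $0$; the inner sum is then at most $\sup_{z\ne 0}\sum_{x\ne y}\tau_p(0,x)\tau_p(x,y)\tau_p(y,z)=\triangle_p$. The constraint $u\ne v$ supplies exactly the condition $z\ne0$ needed in the definition of the open triangle. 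Dropping the extra constraints $u'\ne u$ and $v'\ne v$ only enlarges the sum, but that is harmless because the supremum already accounts for it. Hence $S_N\le\triangle_p S_{N-1}$, and so $S_N\le \triangle_p^{N-1}S_1$.

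It remains to check that $S_1=\sum_{u\ne v}\tau_p(0,u)\tau_p(u,v)\tau_p(v,0)$ is finite. This is the main obstacle, since the open triangle excludes $z=0$. I would take a neighbor $e$ of $0$ and use $\tau_p(v,e)\ge p\,\tau_p(v,0)$, which follows from FKG by opening the edge $\{0,e\}$. This gives $S_1\le p^{-1}\sum_{u\ne v}\tau_p(0,u)\tau_p(u,v)\tau_p(v,e)\le \triangle_p/p<\infty$, the same trick as in~\eqref{eq:bubble_bdd}. This is where $p>0$ is used.

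Then $R_p\supk{N+1}(x)\le \triangle_p^{N}\triangle_p/p\to0$ because $\triangle_p<1$. Taking the limit completes the proof.
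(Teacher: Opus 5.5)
Your proposal is correct and follows essentially the same route as the paper. Like the paper, you bound the remainder by $\sum_{u\ne v}U_p\supk{N+1}(0;u,v)$, peel off one open triangle per step (using transitivity and the constraint $u\ne v$) to gain a factor $\triangle_p$, and handle the base case with the FKG bound $\tau_p(v,0)\le p^{-1}\tau_p(v,e)$, which gives the bound $\triangle_p^{N+1}/p\to 0$.
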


\begin{proof}
We want to take $N \rightarrow \infty$ in Lemma~\ref{lem:sconnect_N}.
Using monotone convergence for the middle term of~\eqref{eq:sconnect_N},
it suffices to prove $R_p\supk{N+1}(x) \rightarrow 0$ as $N \rightarrow \infty$.
We bound $\Pb_p(v \leadsto x \leadsto u) \le 1$ in the definition of $R_p\supk{N+1}$, so 
$R_p\supk{N+1}(x) 
\le \sum_{ u\ne v } U_p\supk{N+1}(0; u,v)$.
We will prove
\begin{equation} \label{eq:U_bound}
\sum_{ u\ne v } U_p\supk{N+1}(0; u,v)
\le \frac { \triangle_p^{N+1} } p 
	\qquad (N \ge 0) ;
\end{equation}
this implies $R_p\supk{N+1}(x) \to 0$, since $p > 0$ and $\triangle_p < 1$.
For any $N$, by definition, Fubini's theorem, and transitivity, 
\begin{align}
\sum_{ u\ne v } U_p\supk{N+1}(0; u,v)
&= \sum_{ u\ne v }
	\sum_{ \substack{ u', v' } }
	U_p\supk{N}(0; u', v') \tau_p(v', u) \tau_p (u,v) \tau_p(v,u') \nonumber\\
&\le \sum_{ u'\ne v' } U_p\supk{N}(0; u', v') \triangle_p,
\end{align}
where in the first line we summed over all pairs of vertices $u',v'$ satisfying $u' \ne v'$, $u' \ne u$, and $v' \ne v$.
Combining this with the FKG estimate $\tau_p(v,0) \le \frac 1 p \tau_p(v,z)$, where $z$ is any neighbor of $0$, we then have
\begin{align}
\sum_{ u\ne v } U_p\supk{N+1}(0; u,v)
\le &\triangle_p^{N} \sum_{ u\ne v } U_p\supk{1}(0; u, v) \nonumber \\
\le &\frac { \triangle_p^{N} } p \sum_{ u\ne v } 
	\tau_p(0,u) \tau_p (u,v) \tau_p(v,z)
\le \frac { \triangle_p^{N+1} } p ,
\end{align}
as claimed. This concludes the proof.
\end{proof}

We can now prove Theorem~\ref{main_thm: susceptibility}.

\begin{proof}[Proof of Theorem~\ref{main_thm: susceptibility}]
It suffices to prove the first inequality of~\eqref{eq:susc}, as the second inequality follows by inserting~\eqref{eq:bubble_bdd} and $\triangle_p < 1$.
We use Proposition~\ref{prop:sconnect_infty}.
By summing~\eqref{eq:sconnect_infty} over all $x$ and invoking the definition of the bubble~\eqref{eq:def_bubble}, we have
\begin{equation}
\E_p |\mathcal C^{\mathrm{sc}}_0| 
= \sum_{x\in V} \Pb_p(0 \rightleftarrows x)
\le \bubble_p +\sum_{n=1}^\infty\sum_{u\ne v} U_p\supk n(0;u,v) \sum_{x\in V} \tau_p(u,x)\tau_p(v,x).
\end{equation}
The last sum over $x$ is also bounded by $\bubble_p$ by Cauchy--Schwarz and transitivity.
Consequently, using~\eqref{eq:U_bound}, we have
\begin{equation}
 \E_p |\mathcal C^{\mathrm{sc}}_0| 
 \le \bubble_p 
 \biggl(
 1+\sum_{n=1}^\infty\sum_{u\ne v} U_p\supk n(0;u,v)
 \biggr) \le \frac{\bubble_p}{p(1-\triangle_p)} ,
\end{equation}
as required.
\end{proof}

\section{Proof of Proposition~\ref{cor: spread-out_non-amenable}} 
\label{sec:proof-of-strict}

We recall the definition of $G_L$ from before. 
Given a graph $G = (V, E)$ and an integer $L \ge 1$, 
let $G_L$ be the graph with the same vertex set $V$ and edge set
\begin{equation}
 E_L := \{ \{x, y\} : x \neq y,\, 0 < d(x,y) \le L \}.
\end{equation}
We write $D_L := |B(x, L)| - 1$ for the degree of a vertex in $G_L$ (the ball has been defined in Subsection~\ref{subsec:structure}).
We start by proving the following estimate on $G_L$.

\begin{lemma} \label{lemma: uniform_spectral_gap}
Let $G$ be an infinite non-amenable transitive graph. 
Let $L \ge 1$ and let $P_L$ be the transition matrix of the simple random walk on $G_L$. 
There exists $\rho^* < 1$, depending only on the graph $G$, such that 
 \begin{equation} \label{ineq: uniform_spectral_gap}
 \sup_{L \ge 1} \|P_L\|_{2\rightarrow2} \le \rho^* < 1. 
 \end{equation}
\end{lemma}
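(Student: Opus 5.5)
The plan is to reduce the uniform bound \eqref{ineq: uniform_spectral_gap} to a uniform isoperimetric inequality for the graphs $G_L$, and then apply the Cheeger-type inequality for reversible Markov operators on transitive graphs. Writing $M_L := P_L$, note that $G_L$ is $D_L$-regular and transitive. The standard bound (Kesten, Dodziuk, Mohar; see \cite[Theorem~6.7]{LyonsPeres16}) gives
\begin{equation*}
\|P_L\|_{2\to2} \le \sqrt{1-h_L^2},
\qquad
h_L := \inf_{\substack{K\subseteq V\\0<|K|<\infty}} \frac{|\partial_{E_L} K|}{D_L\,|K|}.
\end{equation*}
It therefore suffices to find $h_*>0$, depending only on $G$, with $h_L\ge h_*$ for all $L\ge1$. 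Since $G$ is non-amenable, $\iota_E(G)>0$, so $h_1>0$. This also handles any fixed finite range of $L$, because each $G_L$ is again non-amenable; the real task is uniformity as $L\to\infty$.

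For the uniform isoperimetry, I would count the pairs
\begin{equation*}
N(K) := \bigl|\{(x,y): x\in K,\ y\notin K,\ 0<d(x,y)\le L\}\bigr| = |\partial_{E_L}K|,
\end{equation*}
and aim for $N(K)\ge c\,|K|\,|B(0,L)|$. The intended route uses the vertex-expansion consequence of non-amenability: there is $c_0>0$ with $|K_{+1}|\ge(1+c_0)|K|$ for every finite $K$, where $K_{+r}$ denotes the $r$-neighborhood of $K$. I would argue by contradiction. Suppose that for most $x\in K$ the ball $B(x,L)$ lies almost entirely in $K$, with a fraction $1-\delta$ of its volume inside. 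Then, by double counting $\sum_{y}|B(y,L)\cap K| = |K|\,|B(0,L)|$, the set $K' := \{y : |B(y,L)\cap K|\ge (1-2\delta)|B(0,L)|\}$ has size close to $|K|$. However, $K'_{+1}$ is contained in the set of $y$ whose ball meets $K$ in at least a $(1-2\delta)|B(0,L)|/|B(0,L+1)|$ fraction, and $|B(0,L+1)|/|B(0,L)|\le \deg(G)$ is bounded uniformly in $L$. Combining the expansion of $K'$ with this volume count should then force $|K|$ to be too small, which is the contradiction. An equivalent operator formulation is this: if $f\ge0$ is almost invariant for $P_L$, then comparing the kernels of $P_1P_L$ and $P_{L+1}$ (they differ by at most the bounded factor $|B(0,L+1)|/|B(0,L)|$) should transfer almost-invariance to $P_1$. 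That would contradict Kesten's theorem $\|P_1\|_{2\to2}<1$.

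The main obstacle is this transfer step, where a mere bounded comparison between kernels has to be turned into a quantitative gap. In a non-amenable graph a constant fraction of $B(x,L)$ lies near the sphere of radius $L$, so uniform measure on $B(x,L)$ and the law of a uniform point of $B(x,L)$ followed by one $G$-step are \emph{not} close in total variation. Hence the naive perturbation argument fails, and one really needs the double-counting and expansion argument above, carried out with constants that do not depend on $L$. If that becomes unwieldy, I would first try to prove directly that $\sum_{x\in K}|B(x,L)\setminus K|\ge c\,|K|\,|B(0,L)|$. For this I would split $B(x,L)$ according to the distance from $x$ and use $|K_{+r}|\ge(1+c_0)^r|K|$ on the shells. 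This is possible because the shell sizes grow at most geometrically with ratio $\deg(G)$, so their relative weights are uniformly controlled.
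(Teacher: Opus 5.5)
Your reduction is the same as the paper's: by Cheeger's inequality it suffices to show $|\partial_{E_L}K|\ge c\,D_L|K|$ with $c$ independent of $L$. That uniform isoperimetric bound is the whole content of the lemma, and neither argument you sketch for it works.

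\textbf{The contradiction argument.} Markov's inequality for $y\mapsto|B(y,L+1)\cap K|$ gives $|K'_{+1}|\le|K|\,|B(0,L+1)|/\bigl((1-2\delta)|B(0,L)|\bigr)$. Expansion gives $|K'_{+1}|\ge(1+c_0)|K'|\approx(1+c_0)|K|$. The factor $|K|$ cancels, so nothing forces $|K|$ to be small. The two bounds are always compatible: $B(0,L+1)$ is the $1$-neighborhood of $B(0,L)$, so the same expansion gives $|B(0,L+1)|\ge(1+c_0)|B(0,L)|$. Thus the loss in the volume count always absorbs the gain from expansion, for every choice of $\delta$. Knowing $|B(0,L+1)|/|B(0,L)|\le\deg(G)$ does not help, since you would need this ratio to be below $1+c_0$. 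The operator version fails for the same reason: a domination $P_1P_L\le C\,P_{L+1}$ with $C>1$ carries no almost-invariance information.

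\textbf{The shell fallback.} The bound $|K_{+r}|\ge(1+c_0)^r|K|$ counts outside vertices near $K$, but not how many points of $K$ each one sees. It therefore yields only $|\partial_{E_L}K|\ge\bigl((1+c_0)^L-1\bigr)|K|$. This is far below $|K|\,|B(0,L)|$ whenever the volume growth rate exceeds $1+c_0$. A shell-by-shell bound proportional to the shell sizes would itself be an isoperimetric statement of the same kind you are trying to prove.

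\textbf{The missing idea.} You need all level sets of a smoothed indicator at once, together with a cancellation. The paper sets $f(x)=|B(x,L-1)\cap K|$ and applies the coarea formula on $G$ itself:
$\sum_{\{x,y\}\in E}|f(x)-f(y)|=\sum_{k\ge1}|\partial_E\{f\ge k\}|\ge\iota_E(G)\sum_{k\ge1}|\{f\ge k\}|=\iota_E(G)\,|K|\,|B(0,L-1)|$.
For the matching upper bound, take $x\sim y$ in $G$. Transitivity gives $|B(x,L-1)|=|B(y,L-1)|$, so $\mathds{1}_{B(x,L-1)}-\mathds{1}_{B(y,L-1)}$ has total mass zero and is supported in $B(x,L)$. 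One may therefore subtract $\mathds{1}_K(x)$ inside the sum, which gives $|f(x)-f(y)|\le\sum_{z\in B(x,L)}|\mathds{1}_K(z)-\mathds{1}_K(x)|$. This is exactly what defeats the total-variation obstruction you correctly identified. The bound involves the oscillation of $\mathds{1}_K$ on $B(x,L)$, not the size of the symmetric difference of $B(x,L-1)$ and $B(y,L-1)$. Summing over oriented edges of $G$ gives $\sum_{E}|f(x)-f(y)|\le D\,|\partial_{E_L}K|$ with $D=\deg(G)$. Using $D_L\le D\,|B(0,L-1)|$, this yields $|\partial_{E_L}K|/(D_L|K|)\ge\iota_E(G)/D^2$ uniformly in $L$.
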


\begin{proof}
 Let $F$ be a non-empty finite subset of $G$. 
 Define $f(x) := \sum_{z \in B(x, L-1)} \mathds{1}_{F}(z)$. Recall the definition of $\iota_E(G)$ from~\eqref{def: amenable}. Then, 
 \begin{align} \label{ineq: first_coarea}
 &\sum_{\{x, y\} \in E} \left|f(x) - f(y)\right| = \int_{0}^{\infty} |\partial_E\left(\{ f \ge t\}\right)| dt = \sum_{k = 1}^{\infty} | \partial_E\left( \{ f \ge k\} \right) | \nonumber \\
 &\quad\ge \iota_E(G) \sum_{k = 1}^{\infty} |\{ f \ge k \}| = \iota_E(G) \sum_{x \in V} f(x) = \iota_E(G) \, |F| \,|B(x, L-1)|,
 \end{align}
 where the first identity is usually referred to as \emph{the coarea formula}, which is justified by Fubini's theorem, the inequality step follows from the definition of $\iota_E(G)$, and the last identity follows from the fact that $\sum_{x}f(x) = |F| |B(x, L{-}1)|$.
On the other hand, by the coarea formula,
 \begin{align} \label{ineq: second_coarea}
 \sum_{\{x,y\} \in E_L} |\mathds{1}_{F}(x) - \mathds{1}_{F}(y)| = |\partial_{E_L}F|. 
 \end{align}
 Moreover, for every $\{x, y\} \in E$, since $|B(x, L-1)| = |B(y, L-1)|$ and $B(y, L-1) \subseteq B(x, L)$, we have 
 \begin{align}
 \left|f(x) - f(y)\right| = \,&\left|\sum_{z \in B(x, L-1)} \mathds{1}_{F}(z) - \sum_{z \in B(y, L-1)} \mathds{1}_{F}(z)\right| \nonumber \\
 = \,&\left|\sum_{z \in B(x, L)} \mathds{1}_{F}(z) \left( \mathds{1}_{B(x,L-1)}(z) - \mathds{1}_{B(y,L-1)}(z)\right)\right| \nonumber \\
 = \,&\left|\sum_{z \in B(x, L)} \left(\mathds{1}_{F}(z) - \mathds{1}_{F}(x)\right) \left( \mathds{1}_{B(x,L-1)}(z) - \mathds{1}_{B(y,L-1)}(z)\right)\right| \nonumber \\
 \le \,&\sum_{z \in B(x, L)} \left|\mathds{1}_{F}(z) - \mathds{1}_{F}(x)\right|.
 \end{align}
 Therefore, by summing over both orientations of every edge in $E$, 
 \begin{align} \label{ineq: compare_two_tvs}
 2 \sum_{\{x,y\} \in E}\left|f(x) - f(y)\right| \le 2 D \sum_{\{x,y\}\in E_L} \left|\mathds{1}_F(x) - \mathds{1}_F(y)\right|.
 \end{align}
 It follows from~\eqref{ineq: first_coarea},~\eqref{ineq: second_coarea}, and~\eqref{ineq: compare_two_tvs} that there exists $c > 0$ such that
 \begin{align}
 \frac{ \vert \partial_{E_L} F \vert }{D_L |F|} \ge \frac{\iota_E(G)\,|B(x, L{-}1)|}{D \cdot D_L} \ge c.
 \end{align}
Here $D$ is the degree of $G$, and we used $D_L\le D|B(x,L-1)|$.
 Thus, by Cheeger's inequality (see Theorem~6.7 and Proposition~6.6 of~\cite{LyonsPeres16}), we have
 $\|P_L\|_{2\rightarrow2} \le \sqrt{1-c^2} < 1 $ for every integer $L \ge 1$.
\end{proof}

We will also use the following general lemma.

\begin{lemma} \label{claim: (1+epsilon)/d_is_supercritical_spread_out}
 For a transitive graph $H = (V_H, E_H)$, let $d$ be its vertex degree. 
 Let $Q = Q(H)$ denote the transition matrix of the simple random walk on $H$. Let $\rho = \|Q\|_{2\rightarrow2}$.
 \begin{enumerate}
 \item[$(i)$] If $p$ satisfies $pd\rho < 1$ and
 \begin{equation} \label{ineq: sufficient_cond_super}
 pd - \frac{p^2d^{\frac{3}{2}}}{1-pd\rho} - \frac{p^2d}{2(1-pd\rho)^2} > 1,
 \end{equation}
 then $p \ge p_c(H)$.
 \item[$(ii)$] If $p$ satisfies $pd\rho < 1$, then
 \begin{equation} \label{ineq: triangle_upper_bound}
 \triangle_p \le \frac{p\sqrt{d}}{(1 - pd\rho)^3}. 
 \end{equation}
 \end{enumerate}
 \end{lemma}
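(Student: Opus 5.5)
For part~(ii), I plan to dominate the two-point function by the Green's function of a subcritical random-walk expansion and then use the $\ell^2$ operator calculus. Since $0\leftrightarrow x$ requires an open self-avoiding path, a union bound over walks gives
\[
\tau_p(x,y)\le \sum_{n\ge 0} p^n (d^nQ^n)(x,y) = T(x,y), \qquad T:=(I-pdQ)^{-1},
\]
and this converges in operator norm because $pd\rho<1$, with $\|T\|_{2\to2}\le (1-pd\rho)^{-1}$. When $x\ne y$ the $n=0$ term vanishes, so in that case $\tau_p(x,y)\le T'(x,y)$ with $T':=pdQ\,T$. All these operators are functions of the symmetric operator $Q$, so they commute. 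Hence for any $z$,
\[
\sum_{x\ne y}\tau_p(0,x)\tau_p(x,y)\tau_p(y,z)\le \langle \delta_0, T\,T'\,T\,\delta_z\rangle = pd\,\langle \delta_0, T^3 Q\delta_z\rangle \le pd\,\|T\|^3\,\|Q\delta_z\|_2 .
\]
Since $\|Q\delta_z\|_2 = (d\cdot d^{-2})^{1/2}=d^{-1/2}$, this gives exactly $p\sqrt d/(1-pd\rho)^3$. Taking the supremum over $z$ proves~\eqref{ineq: triangle_upper_bound}.

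For part~(i), I would argue by contradiction. Suppose $p<p_c(H)$. By sharpness of the phase transition, $\chi:=\sum_x\tau_p(0,x)<\infty$. The goal is a lower bound of the form $\chi\ge 1+\chi\,(pd - c_1 - c_2)$, where $c_1,c_2$ are the two correction terms in~\eqref{ineq: sufficient_cond_super}. Under~\eqref{ineq: sufficient_cond_super} the coefficient of $\chi$ exceeds $1$, so this forces $0\ge 1$ (or $\chi<0$), which is the contradiction.

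To get the lower bound, fix $x\ne 0$ and note that $0\leftrightarrow x$ holds iff some edge $e=\{0,y\}$ is open and $y\leftrightarrow x$ off $0$. By Bonferroni,
\[
\tau_p(0,x)\ \ge\ \sum_{y\sim 0} p\,\Pb_p(y\leftrightarrow x \text{ off } 0)\ -\ \sum_{y\ne y'\sim 0} p^2\,\Pb_p(y\leftrightarrow x,\ y'\leftrightarrow x),
\]
where independence of the edge $e$ from the connection event off $0$ is used for the first term. I then bound the two corrections using BK.
\begin{itemize}
\item $\Pb_p(y\leftrightarrow x \text{ off }0)\ge \tau_p(y,x)-\Pb_p(y\leftrightarrow 0\circ 0\leftrightarrow x)\ge \tau_p(y,x)-\tau_p(y,0)\tau_p(0,x)$.
\item $\Pb_p(y\leftrightarrow x,\ y'\leftrightarrow x)\le\sum_w \tau_p(y,w)\tau_p(y',w)\tau_p(w,x)$.
\end{itemize}
Summing over $x$ turns each correction into $\chi$ times a diagram, which I then estimate with the Green's function bound from part~(ii). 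The first correction becomes $pd\,\chi\,\tau_p(y,0)$ with $y\sim 0$, and $\tau_p(y,0)\le T'(y,0)$ should yield the $p^2d^{3/2}/(1-pd\rho)$ term. The second becomes $p^2\chi\sum_{y\ne y'}\sum_w T(y,w)T(y',w)$, which via $\langle \mathbf 1_{N(0)}, T^2\mathbf 1_{N(0)}\rangle$ and a one-step extraction should give the $p^2d/(2(1-pd\rho)^2)$ term. The factor $\tfrac12$ would come from choosing the Bonferroni correction over unordered pairs $\{y,y'\}$, and I would replace the diagonal contribution by a $T'$ factor.

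The main obstacle is getting the constants exactly as in~\eqref{ineq: sufficient_cond_super}. In particular, I need to check which diagonal or $n=0$ terms can be removed using $T'$ rather than $T$, so that the powers of $d$ come out as $d^{3/2}$ and $d$. I would first try to handle this by writing each correction as an inner product $\langle \delta_0, Q\,\cdot\,\rangle$ and using $\|Q\delta_0\|_2=d^{-1/2}$, exactly as in part~(ii).
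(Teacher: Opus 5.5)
Your part (ii) is the paper's argument essentially verbatim: your $T$ and $T'$ are the paper's $R$ and $R-I$, and the bound $\langle\delta_0,TT'T\delta_z\rangle\le pd\,\|T\|^3\|Q\delta_z\|_2$ is exactly the paper's estimate. Part (i) is correct but takes a different route.

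\textbf{Comparison for part (i).} You work with the susceptibility $\chi$ and derive the contradiction $\chi\ge 1+\chi\,(pd-c_1-c_2)$. This is only meaningful once you know $\chi<\infty$, so you need sharpness of the phase transition on transitive graphs (Menshikov, Aizenman--Barsky, Duminil-Copin--Tassion); without it you would only get $p\ge p_T$.

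The paper instead uses a ghost field. Each vertex is marked independently with probability $\epsilon$. The same first-edge Bonferroni decomposition and the same two BK corrections (the loop through $0$ and the tree-graph term) are then applied to $\Pb_p(0\leftrightarrow\mathsf{mark})$, which is automatically positive and finite. This gives $\tfrac{p^2d^2}{2}\Pb_p(0\leftrightarrow\mathsf{mark})\ge pd-c_1-c_2-1>0$ uniformly in $\epsilon$, hence $\theta(p)>0$ as $\epsilon\to0$.

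The diagrams and the $\ell^2$ estimates are the same in both proofs. The paper's version is self-contained and yields an explicit lower bound on $\theta(p)$. The price is an extra quadratic term $\Pb_p(B_u)\Pb_p(B_v)$ that must be moved to the left-hand side. Yours is linear in two-point functions and a bit shorter, but it leans on a deep theorem.

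\textbf{The constants.} You do not need to worry about them; they come out directly. Summing exactly over $x\neq0$ gives $\sum_{x\ne0}\bigl[\tau_p(y,x)-\tau_p(y,0)\tau_p(0,x)\bigr]=\chi\,(1-\tau_p(y,0))$.
\begin{itemize}
\item The first correction is therefore exactly $p\chi\sum_{y\sim0}\tau_p(y,0)$. Using $\tau_p(y,0)\le T'(y,0)=pd\langle\delta_y,TQ\delta_0\rangle$ and $\|Q\delta_0\|_2=d^{-1/2}$, this is at most $\chi\, pd\cdot p\sqrt d/(1-pd\rho)$.
\item The second correction is at most $\tfrac{p^2}{2}\chi\langle\mathbf{1}_{N(0)},T^2\mathbf{1}_{N(0)}\rangle\le \chi\,p^2d/(2(1-pd\rho)^2)$ directly. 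No diagonal removal or one-step extraction is needed.
\end{itemize}
The one place to be careful is this $x=0$ bookkeeping. If you bound $\sum_{x\ne0}\tau_p(y,0)\tau_p(0,x)$ crudely by $\tau_p(y,0)\chi$, the leading $1$ becomes $1-p\sum_{y\sim0}\tau_p(y,0)$, and you would then have to control its sign.
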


\begin{proof}[Proof of Proposition~\ref{cor: spread-out_non-amenable} assuming Lemma~\ref{claim: (1+epsilon)/d_is_supercritical_spread_out}]
We will apply Theorem~\ref{main_thm: strict} to $G_L$.
By Lemma~\ref{lemma: uniform_spectral_gap}, we can choose $t, s$ such that $1 < t < s < (\rho^*)^{-1}$.
It suffices to prove,
 for all sufficiently large $L$, that
 \begin{align} \label{ineq: two_conditions_spread_out}
 p_c(G_L) \le \frac{t}{D_L} 
 \qquad \text{and} \qquad \triangle_{s / D_L} < 1.
 \end{align}
 For the first item, 
 we apply Lemma~\ref{claim: (1+epsilon)/d_is_supercritical_spread_out}(i) to $G_L$ with $p = t/D_L$. 
 In this case, condition~\eqref{ineq: sufficient_cond_super} translates to
 \begin{equation}
 t - \frac{t^2}{\sqrt{D_L}(1 - t\rho^*)} - \frac{t^2}{2D_L(1-t\rho^*)^{2}} > 1,
 \end{equation}
 which is true since $t \in (1, 1/ \rho^*)$ and $D_L \to \infty$ as $L\to \infty$.
For the second item, we apply Lemma~\ref{claim: (1+epsilon)/d_is_supercritical_spread_out}(ii) to $G_L$ with $p = s/D_L$ to get
 \begin{equation}
 \triangle_{s/D_L} \le \frac{s}{\sqrt{D_L}(1 - s\rho^*)^3}.
 \end{equation}
 This is strictly less than $1$ since $s \in (1, 1/ \rho^*)$ and $D_L \to \infty$.
We thus have 
 verified both inequalities in~\eqref{ineq: two_conditions_spread_out} for all sufficiently large $L$. This concludes the proof.
\end{proof}

It remains to prove Lemma~\ref{claim: (1+epsilon)/d_is_supercritical_spread_out}.

 \begin{proof}[Proof of Lemma~\ref{claim: (1+epsilon)/d_is_supercritical_spread_out}]
 Fix $\epsilon \in (0, 1)$.
 We mark each vertex independently with probability $\epsilon$. The marking is independent of the percolation configuration. Fix an arbitrary vertex $0 \in V$. 
 We write ``$x \leftrightarrow \mathsf{mark}$'' if $x$ is connected to a marked vertex and write ``$x \leftrightarrow y$ off $z$'' if there is an open path connecting $x$ and $y$ without using edges incident to $z$. 
 For $v \in V_H$, we define the events
 \begin{equation}
 M_v := \left\{ v \leftrightarrow \mathsf{mark} \right\}, \quad \text{and} \quad
 B_v := \left\{ v \leftrightarrow \mathsf{mark} \text{ off } 0 \right\}.
 \end{equation}
 For $v \sim 0$, define
 \begin{equation}
 A_v := \left\{ \{0, v\} \text{ is open} \right\} \cap B_v. 
 \end{equation}
 Since the two events in the definition of $A_v$ are independent, we have 
 \begin{equation} \label{eq:A_B}
 \mathbb{P}_p(A_v) = p \mathbb{P}_p(B_v) \quad \text{and} \quad \mathbb{P}_p(A_v \cap A_u) = p^2 \mathbb{P}_p(B_v \cap B_u).
 \end{equation}
 By conditioning on $\mathcal{C}_0^{\mathrm{un}}$,
 \begin{equation} \label{limit: epsilon_theta_p}
 \mathbb{P}_p(M_0) = \mathbb{E} \left[ 1 - (1 - \epsilon)^{|\mathcal{C}_0^{\mathrm{un}}|} \right] \rightarrow \theta(p) \ \text{ as $\epsilon \rightarrow 0$,}
 \end{equation}
 where the limit follows from the dominated convergence theorem. 
To prove part (i) of the lemma, we will show that $\mathbb{P}_p(M_0)$ is uniformly positive as $\eps \to 0$ under our hypotheses on $p$. 
This will imply $\theta(p) > 0$ and hence $ p \ge p_c$ at that $p$.
We start from the observation that
 \begin{equation} \label{ineq: approxi_theta}
 \mathbb{P}_p(M_0) \ge \sum_{v \sim 0}\mathbb{P}_p(A_v) - \frac{1}{2} \sum_{\substack{u, v \sim 0\\u \neq v}} \mathbb{P}_p(A_v\cap A_u) ,
 \end{equation}
 where the $1/2$ comes from counting both $(u,v)$ and $(v, u)$. 

 We first lower bound $\mathbb P_p(B_v)$ and upper bound $\mathbb P_p(B_v \cap B_u)$. 
 To lower bound $\mathbb P_p(B_v)$, observe that on the event $M_v \setminus B_v$, 
 there must exist two paths using disjoint edges, one from $v$ to $0$ and one from $0$ to a marked vertex. 
 By the BK inequality,
 \begin{equation} \label{ineq: B}
 \mathbb{P}_p(M_v) - \mathbb{P}_p(B_v) \le \tau_p(v, 0) \mathbb{P}_p(M_0).
 \end{equation}
 The event $B_v \cap B_u$ can be split into two events: (1) $B_v$ and $B_u$ both occur using disjoint edges and marks, and (2) there is a vertex $z$ such that $v \leftrightarrow z$ off $0$, $u \leftrightarrow z$ off $0$, and $z \leftrightarrow \mathsf{mark}$ off $0$, and all three events use disjoint edges and marks. Therefore, by the BK inequality, we have 
 \begin{equation} \label{ineq: B_coralation}
 \mathbb{P}_p(B_v \cap B_u) \le \mathbb{P}_p(B_v) \mathbb{P}_p(B_u) + \sum_{z \in V_H} \mathbb{P}_p(B_z) \tau_p(u, z) \tau_p(v, z).
 \end{equation}

 By transitivity, we have $\mathbb{P}_p(M_v) = \mathbb{P}_p(M_0)$ for all $v \in V_H$.
 Combining~\eqref{eq:A_B},~\eqref{ineq: approxi_theta},~\eqref{ineq: B},~\eqref{ineq: B_coralation}, and $\mathbb{P}_p(B_v) \le \mathbb{P}_p(M_v)$, we have that 
 \begin{equation}
 \mathbb{P}_p(M_0) 
 \ge p\mathbb{P}_p(M_0)\sum_{v \sim 0} (1 - \tau_p(v, 0)) - \frac{p^2}{2} \sum_{\substack{u, v \sim 0\\u \neq v}} \biggl( \mathbb{P}_p(M_0)^2 + \mathbb{P}_p(M_0) \sum_{z\in V_H} \tau_p(u, z)\tau_p(v, z) \biggr).
 \end{equation}
 Using $\epsilon > 0$, we can divide both sides by 
 $\mathbb{P}_p(M_0) > 0$. 
 Then rearranging gives
 \begin{equation} \label{ineq: raw_cond_super}
 \frac{p^2}{2} \mathbb{P}_p(M_0) d^2 \ge pd - p\sum_{v \sim 0} \tau_p(v, 0) - \frac{p^2}{2} \sum_{\substack{u, v \sim 0 \\ u \neq v}} \sum_{z \in V_H} \tau_p(u, z) \tau_p(v, z) - 1.
 \end{equation}
 
 We next use $pd\rho < 1$.
 Let $R := \sum_{n \ge 0} (pd)^n Q^n$.
 Then $\|R\|_{2\rightarrow2} = 1/(1-pd\rho)$ and $\|R-I\|_{2\rightarrow2} = pd\rho/(1-pd\rho)$.
 Using $\|\sum_{z \sim 0}\delta_z\|_2 = \sqrt{d}$
 and using the fact that $\tau_p(u, v) \le (R-I) (u, v)$ for every $u\neq v$, we have 
 \begin{equation}
 \sum_{v \sim 0}\tau_p(0, v) \le \left\langle \delta_0, \left(R-I\right) \sum_{v \sim 0}\delta_v \right\rangle
 \le \left\|R-I\right\|_{2\rightarrow2} \; \left\|\sum_{v \sim 0} \delta_v\right\|_{2} \le \frac{pd^{3/2}\rho}{1-pd\rho}
 \le \frac{pd^{3/2}}{1-pd\rho} .
 \end{equation}
 Since $\tau_p(u, v) \le R (u, v)$ for every $u,v$, 
 \begin{equation}
 \sum_{\substack{u, v \sim 0 \\ u \neq v}} \sum_{z \in V_H} \tau_p(u, z) \tau_p(v, z) \le \sum_{z \in V_H} \left(\sum_{u \sim 0} \tau_p(u, z)\right)^2 \le \left\|R \left( \sum_{u \sim 0} \delta_u \right)\right\|_2^2 
 \le \frac{d}{(1 - pd\rho)^2}.
 \end{equation}
 Inserting these into~\eqref{ineq: raw_cond_super}, we obtain
 \begin{equation} 
 \frac{p^2d^2 }{2} \mathbb{P}_p(M_0) 
\ge pd - \frac{p^2 d^{3/2}}{1-pd\rho}
 - \frac{p^2d}{2(1 - pd\rho)^2}
 - 1,
 \end{equation}
which is strictly positive by hypothesis~\eqref{ineq: sufficient_cond_super}.
Letting $\epsilon \rightarrow 0$ and using~\eqref{limit: epsilon_theta_p},
we get $\theta(p) > 0$, so $p \ge p_c$, proving the first part of the lemma.

 Finally, we prove part (ii) of the lemma.
 Note that $\|Q\delta_z\|_2 = d^{-1/2}$.
 For any $z \in V_H$, by the Cauchy--Schwarz inequality, 
 \begin{align}
 \sum_{x\neq y} \tau_p(0, x)\tau_p(x, y)\tau_p(y, z) 
 &\le \left\langle \delta_0, R (R-I) R \delta_z\right\rangle \le \|R\|_{2\rightarrow2}^2 \|(R-I)\delta_z\|_2 \nonumber \\
 &= \|R\|_{2\rightarrow2}^2 \|pdRQ\delta_z\|_2 \le \frac{p\sqrt{d}}{(1 - pd\rho)^3}. 
 \end{align}
 We then take the supremum over $z$ to get the desired bound on $\triangle_p$.
 \end{proof}

\appendix

\section{Bubble diagram on $\mathbb Z^2$}
\label{app:bubble}

Recall $\tau_p(x,y) = \Pb_p(x \leftrightarrow y)$ denotes the two-point function for (undirected) Bernoulli percolation.
In this appendix, we give a short proof of the following classical result.

\begin{lemma}
For $\mathbb Z^2$,
we have $\bubble_{p_c} = \sum_{x\in \Z^2} \tau_{p_c}(0,x)^2 = \infty$.
\end{lemma}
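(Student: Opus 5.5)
We need $\sum_x \tau_{p_c}(0,x)^2=\infty$ on $\mathbb Z^2$. The plan is to use RSW theory at criticality together with Kesten's result $p_c=1/2$ for bond percolation on $\mathbb Z^2$, and to lower bound the two-point function by a one-arm probability squared. This reduces the bubble to an annulus sum that diverges because the two-dimensional one-arm exponent satisfies $2\cdot\frac{5}{48}<1$. We only need the weaker fact that the one-arm probability decays slower than $n^{-1/2}$, which we get from RSW directly without exact exponents.

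\textbf{Step 1 (two-point lower bound).} For $|x|\asymp n$, consider the boxes $\Lambda_{2n}(0)$ and $\Lambda_{2n}(x)$. Applying RSW and FKG at $p_c=1/2$, the following event has probability at least $c>0$ uniformly in $n$: there is an open circuit in the annulus $\Lambda_{4n}\setminus\Lambda_{2n}$, centred suitably so that it surrounds both $0$ and $x$, and this circuit is connected to both $\partial\Lambda_{n/4}(0)$ and $\partial\Lambda_{n/4}(x)$. We intersect this event with the events $\{0\leftrightarrow\partial\Lambda_{n/4}(0)\}$ and $\{x\leftrightarrow\partial\Lambda_{n/4}(x)\}$, and use the standard RSW gluing (FKG with circuits in the intermediate annuli) to obtain
\[
\tau_{p_c}(0,x)\ \ge\ c\,\pi(n)^2, \qquad \pi(n):=\Pb_{p_c}(0\leftrightarrow\partial\Lambda_n).
\]

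\textbf{Step 2 (lower bound on the one-arm probability).} We show $\pi(n)\ge c\,n^{-1/2+\delta}$ for some $\delta>0$. A cheap route is the following. By RSW, a crossing of $\Lambda_n$ occurs with probability bounded below. Any left-right crossing contains a vertex on the left side that is connected at distance $n$, so summing over the $n$ boundary vertices gives $n\,\pi(n)\ge c$, that is $\pi(n)\ge c/n$. This is too weak, since we need the exponent strictly less than $1/2$. To improve it, we use the standard quasi-multiplicativity argument together with the inequality $\pi(n)\ge c\,n^{-1/2}$. The latter follows from the fact that the probability of an open crossing of $\Lambda_n$ from the left side, combined with the BK/van den Berg-type argument and the Kesten "two-arm from boundary" bound, yields $\pi(n)^2\,n\ge c$ via the half-plane three-arm estimate. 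The gain of $\delta$ comes from a strict comparison: polychromatic arm exponents are strictly ordered, so the one-arm exponent is strictly smaller than the two-arm half-plane exponent, which equals $1$. I expect this step to be the main obstacle. If it proves too difficult, the fallback is to cite the known bound $\pi(n)\ge n^{-1/3}$, or the exact value $5/48$ for site percolation on the triangular lattice, with care about the lattice.

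\textbf{Step 3 (summation).} Combining Steps 1 and 2,
\[
\bubble_{p_c}\ \ge\ \sum_{k}\ \sum_{|x|\asymp 2^k} c\,\pi(2^k)^4\ \ge\ \sum_k c\,4^k\,2^{-k(2-4\delta)},
\]
which diverges. In fact, Step 1 alone combined with the trivial bound $\tau(0,x)\ge c\,\pi(n)^2$ requires $\pi(n)^4 n^2\not\to 0$. An alternative that avoids sharp arm exponents is to use $\sum_{|x|\le n}\tau(0,x)\ge c\,n^2\pi(n)$ together with Cauchy–Schwarz: $\sum_{|x|\le n}\tau^2\ge \big(\sum_{|x|\le n}\tau\big)^2/n^2\ge c\,n^2\pi(n)^2$. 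This needs only $n\,\pi(n)\to\infty$, which holds because $\pi(n)\ge c\,n^{-1/2}$, a standard RSW consequence. I would adopt this simpler Cauchy–Schwarz route as the main argument.
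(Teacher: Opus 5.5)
The Cauchy--Schwarz route you adopt as your main argument rests on a false estimate: $\sum_{|x|\le n}\tau_{p_c}(0,x)\ge c\,n^2\pi(n)$ does not hold. Since $\tau_{p_c}(0,x)\asymp\pi(|x|)^2$, the standard scaling relation is $\E_{p_c}|\mathcal C_0^{\un}\cap\Lambda_n|\asymp n^2\pi(n)^2$. This is smaller than $n^2\pi(n)$ by the vanishing factor $\pi(n)$. The quantity of order $n^2\pi(n)$ is the expected number of points of $\Lambda_n$ joined to $\partial\Lambda_{2n}$, not to $0$. With the correct order, Cauchy--Schwarz over the whole ball gives only $\sum_{|x|\le n}\tau_{p_c}(0,x)^2\gtrsim n^2\pi(n)^4$. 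Under $\pi(n)\ge c\,n^{-1/2}$ this is merely bounded below, so no divergence follows. Your first route has a different problem. The improvement $\pi(n)\ge c\,n^{-1/2+\delta}$ in Step~2 is not actually argued: the one-arm event is monochromatic, so ordering of polychromatic exponents says nothing about it, and the value $5/48$ is known only for site percolation on the triangular lattice. Also, the mechanism you cite for $\pi(n)\ge c\,n^{-1/2}$ (half-plane three-arm estimates) is not a proof of it.

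What rescues the proposal is a remark you made yourself and then set aside: Steps~1 and~3 need only $n^2\pi(n)^4\not\to0$, so $\delta=0$ suffices. Each dyadic shell $\{2^k\le|x|<2^{k+1}\}$ has $\asymp4^k$ points with $\tau_{p_c}(0,x)\gtrsim\pi(2^k)^2\gtrsim2^{-k}$. Hence each shell contributes $\gtrsim1$ to $\bubble_{p_c}$, and there are infinitely many shells. The input $\pi(n)\ge c\,n^{-1/2}$ has a short proof. At $p=\tfrac12$, a suitable $(n+1)\times n$ rectangle is crossed horizontally with probability $\tfrac12$ by self-duality. Such a crossing visits a vertex $z$ on the middle vertical line, and its two halves are edge-disjoint open paths from $z$ to distance at least $\lfloor n/2\rfloor$. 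BK and a union bound over the $n+1$ choices of $z$ then give $\tfrac12\le(n+1)\pi(\lfloor n/2\rfloor)^2$.

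Even repaired, your argument is much heavier than the paper's, which uses neither $p_c=\tfrac12$, RSW, nor gluing. For $p>p_c$, the paper splits a path to infinity at its first exit from $B_n$ and applies BK, giving $\theta(p)\le\theta(p)\sum_{x\in\partial B_n}\Pb_p(0\leftrightarrow x\text{ inside }B_n)$. Dividing by $\theta(p)>0$ and letting $p\downarrow p_c$ (a finite-volume event) yields $\sum_{x\in\partial B_n}\tau_{p_c}(0,x)\ge1$. Cauchy--Schwarz on each sphere, of size $8n$, then gives $\sum_{x\in\partial B_n}\tau_{p_c}(0,x)^2\ge1/(8n)$, whose sum diverges. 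Both arguments gain a constant per dyadic scale. Yours additionally yields the pointwise bound $\tau_{p_c}(0,x)\gtrsim|x|^{-1}$. The paper's needs only an averaged bound on spheres, and uses nothing about $\mathbb Z^2$ beyond $\theta(p)>0$ above $p_c$ and linear growth of spheres.
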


\begin{proof}
Let $\theta(p) = \Pb_p(0\leftrightarrow\infty)$.
For every $p > p_c$, decomposing according to the first exit from $B_n = [-n,n]^2$ and applying the BK inequality gives
 \begin{equation}
 \theta(p) \le \sum_{x \in \partial B_n} \Pb_p(0 \xlongleftrightarrow[]{B_n} x) \theta(p), 
 \end{equation}
 where $0 \xleftrightarrow[]{B_n} x$ denotes the event that $0$ is connected to $x$ by a path contained in $B_n$.
 Note that this event depends only on finitely many edges, and hence its probability is continuous in $p$.
 Letting $p \downarrow p_c$ then gives $\sum_{x \in \partial B_n} \tau_{p_c}(0,x) \ge 1$.
 Now, by the Cauchy--Schwarz inequality, 
 \begin{equation}
 \bubble_{p_c} = 
 \sum_{n = 0}^{\infty} \sum_{x \in \partial B_n} \tau_{p_c}(0,x)^2 
 \ge \sum_{n = 0}^{\infty} \frac{(\sum_{x \in \partial B_n} \tau_{p_c}(0,x) \cdot 1)^2}{|\partial B_n|} 
 \ge \sum_{n=1}^\infty \frac c n = \infty,
 \end{equation}
 as desired.
\end{proof}

\section{Continuity of the open triangle diagram}
\label{app:cont}

Recall that
\begin{align} \label{eq:p_triangle}
p_{\triangle} 
&= \sup \Bigl\{ p : 
 \triangle_p = \sup_{z \ne 0} \sum_{ x\ne y }
	\tau_p(0,x) \tau_p(x,y) \tau_p(y,z)
 < \infty \Bigr\}
 \nonumber \\
&= \sup \Bigl\{ p : 
 \sup_{z \in V} \sum_{ x, y\in V }
	\tau_p(0,x) \tau_p(x,y) \tau_p(y,z)
 < \infty \Bigr\} 
\end{align}
denotes the finite-triangle threshold.

\begin{lemma}
The map $p \mapsto \triangle_p$ is continuous on $(0,p_\triangle)$.
\end{lemma}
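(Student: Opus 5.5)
We will show that $p\mapsto\triangle_p$ is both lower and upper semicontinuous on $(0,p_\triangle)$. The basic inputs are that each $\tau_p(x,y)$ is nondecreasing in $p$, and that $\tau_p(x,y)$ is left-continuous in $p$ (it is an increasing limit of probabilities of events depending on finitely many edges). Throughout we fix $p_0\in(0,p_\triangle)$ and $p_1\in(p_0,p_\triangle)$, so that $\triangle_{p_1}<\infty$.

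For left-continuity at $p_0$, let $p\uparrow p_0$. For each $z\neq0$, monotone convergence gives
\[
\sum_{x\ne y}\tau_p(0,x)\tau_p(x,y)\tau_p(y,z)\uparrow\sum_{x\ne y}\tau_{p_0}(0,x)\tau_{p_0}(x,y)\tau_{p_0}(y,z).
\]
Hence $\liminf_{p\uparrow p_0}\triangle_p\ge\triangle_{p_0}$, and monotonicity gives the reverse inequality $\triangle_p\le\triangle_{p_0}$. Left-continuity is therefore soft.

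The main obstacle is right-continuity. Here we need a uniform estimate, since the supremum over $z$ and the infinite sums prevent a direct limit. We plan to use a differential (Russo-type) inequality for $\tau_p$ on $[p_0,p_1]$. By Russo's formula and BK,
\[
\frac{d}{dp}\tau_p(a,b)\le\sum_{\{u,v\}\in E}\tau_p(a,u)\tau_p(v,b)\le\frac1p\sum_{u,v}\tau_p(a,u)\tau_p(u,v)\tau_p(v,b),
\]
where the second step bounds the single edge by $\tau_p(u,v)/p$ using $\tau_p(u,v)\ge p$ for neighbours. This derivative bound must be justified by finite-volume approximation, since $\tau_p$ need not be differentiable; we would instead state it as a bound on increments $\tau_{p'}(a,b)-\tau_p(a,b)$, with the right-hand side evaluated at $p'$. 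Plugging this increment bound into the triangle, $\triangle_{p'}-\triangle_p$ is bounded by $(p'-p)$ times a sum of diagrams with one extra "open triangle" inserted into one of the three lines.

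Each such enlarged diagram is a chain of at most five $\tau_{p'}$ factors with a free endpoint $z$. We bound it by iterating the closed-triangle bound $T_{p_1}:=\sup_z\sum_{x,y}\tau\tau\tau<\infty$, which is finite because $p_1<p_\triangle$. Concretely, for the $\tau(0,x)$ factor the inserted term is
\[
\sum_{x,y,u,v}\tau(0,u)\tau(u,v)\tau(v,x)\tau(x,y)\tau(y,z).
\]
Summing first over $x,y$ for fixed $v$ and $z$ gives at most $T_{p_1}$. The remaining factor $\sum_{u,v}\tau(0,u)\tau(u,v)$ has no free endpoint, so it is not controlled by $T$. We fix this by grouping differently: we use $\sup_z\sum\tau\tau\tau\tau\tau$ and bound it via the operator norm on $\ell^\infty\to\ell^\infty$ for the kernel $\tau^{*3}$ (rows summable uniformly by transitivity). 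If that grouping also fails, we instead restrict to $p_1$ close enough to $p_0$ that the bubble/susceptibility is finite. Whichever way it is closed, the result is $|\triangle_{p'}-\triangle_p|\le C(p'-p)$ uniformly for $p_0\le p<p'\le p_1$. This Lipschitz bound on $[p_0,p_1]$ gives right-continuity, and with the first step, continuity on $(0,p_\triangle)$.
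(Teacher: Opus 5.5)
Your left-continuity argument is fine, and so is your increment bound. In finite volume, Russo's formula and BK (summing over oriented edges) give, after letting the volume grow,
\[
0\le \tau_{p'}(a,b)-\tau_p(a,b)\le \frac{p'-p}{p}\,\sup_{z}\sum_{u,v}\tau_{p'}(0,u)\,\tau_{p'}(u,v)\,\tau_{p'}(v,z),
\]
and the right-hand side is finite for $p'<p_\triangle$. The gap is in the next step. Inserting this bound into the triangle produces chains of five two-point functions, so you need $\sup_z \tau_{p_1}^{*5}(0,z)<\infty$. The hypothesis $p_1<p_\triangle$ only controls chains of length at most three.

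Neither of your fixes works in the relevant range. The $\ell^\infty\to\ell^\infty$ norm of the kernel $\tau_{p_1}^{*3}$ is its row sum $\sum_z \tau_{p_1}^{*3}(0,z)=\chi(p_1)^3$. Transitivity makes all rows equal, not finite, and $\chi(p)=\infty$ for every $p\ge p_c$. For the same reason there is no $p_1>p_0$ with finite susceptibility once $p_0\ge p_c$. A finite bubble does not bound a five-fold chain either; that would need something like $\ell^2$-boundedness of $\tau_{p_1}$, which $p_1<p_\triangle$ is not known to provide.

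So your Lipschitz estimate is established only for $p_0<p_c$, where everything is easy. The case that matters is $p_0\in[p_c,p_\triangle)$, in particular right-continuity at $p_c$, which is exactly how the lemma is used in the proof of Theorem~\ref{main_thm: strict}.

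What is missing is uniformity in $z$, i.e.\ tightness of the open triangle at infinity. The paper gets it from the cited theorem that the triangle condition implies the open triangle condition. Write $\triangle_p(z)=\sum_{x\ne y}\tau_p(0,x)\tau_p(x,y)\tau_p(y,z)$ and $q=p_\triangle-\eps$; then $\triangle_q(z)\to 0$ as $|z|\to\infty$. Choose $R_\eps$ with $\sup_{|z|>R_\eps}\triangle_q(z)<\triangle_\eps$. Monotonicity in $p$ gives $\triangle_p(z)\le\triangle_q(z)<\triangle_\eps\le\triangle_p$ for all $|z|>R_\eps$ and $p\in[\eps,q]$. Hence on $[\eps,q]$ the supremum is a maximum over the finitely many $z$ with $0<|z|\le R_\eps$.

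It then suffices that each $p\mapsto\triangle_p(z)$ is continuous. This follows by dominated convergence, dominating by the $q$-terms, once each $\tau_p(a,b)$ is continuous in $p$. Here your increment bound is genuinely useful: it gives uniform Lipschitz continuity of $\tau_p(a,b)$ on $[\eps,q]$. That includes the right-continuity which your monotonicity argument on the left does not provide. Replacing the five-chain estimate by this localization step repairs your proof.
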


\begin{proof}
Let $\triangle_p(z) = \sum_{x \neq y} \tau_p(0, x) \tau_p(x, y) \tau_p(y, z)$, so that $\triangle_p = \sup_{z\ne 0} \triangle_p(z)$.
It suffices to prove that $p \mapsto \triangle_p$ is continuous on $[\eps,p_\triangle - \eps]$ for all $\eps > 0$.
For simplicity, we write $q = p_\triangle - \eps$.
By the second equality in~\eqref{eq:p_triangle}, we know the triangle condition holds at $q$,
so the main result of~\cite{10.1214/09-AIHP352} implies that
\begin{equation}
\triangle_p = \max_{ 0 < |z| \le R_\eps} \triangle_p(z)
\end{equation}
for all $p \in [\eps, q]$ for some $R_\eps > 0$. 
(This can be established by taking $R_\eps$ large enough so that $\sup_{|z| > R_\eps} \triangle_q(z) 
< \triangle_{\eps}$.) 
Then, since $\triangle_p(z)$ is continuous in $p$ for each fixed $z$ by dominated convergence, the desired result follows.
\end{proof}

\section*{Use of AI}

The two examples to which Theorem~\ref{main_thm: strict} applies were first suggested by Claude.
Other AI tools have been used for grammar checking and proofreading.

\section*{Acknowledgments}

We thank Omer Angel, Daniel de la Riva, Tom Hutchcroft, Christoforos Panagiotis, and Gábor Pete for helpful comments and discussions.
The work of YL was supported in part by the National Natural Science Foundation of China (Grant Nos.~12595294 and 12231002)
and the New Cornerstone Science Foundation (Grant No.~NCI202501). 
LR has been supported by FAPESP grants 2023/13453-5 and 2025/27064-6 and CNPq grant 408590/2024-6.
The work of YS was supported in part by NSERC.

\bibliographystyle{plain}
\bibliography{ref.bib}

\end{document}